\newtheorem{theorem}{Theorem}[section]
\newtheorem{lemma}[theorem]{Lemma}
\newtheorem{proposition}[theorem]{Proposition}
\newtheorem{corollary}[theorem]{Corollary}
\theoremstyle{definition}
\newtheorem{example}[theorem]{Example}
\newtheorem{definition}[theorem]{Definition}
\newtheorem{remark}[theorem]{Remark}
\DeclareMathOperator{\id}{id}
\DeclareMathOperator{\tr}{tr}
\newcommand{\cl}[1]{\mathcal{#1}}
\newcommand{\bb}[1]{\mathbb{#1}}
\newcommand{\A}{\cl A}
\newcommand{\B}{\cl B}
\newcommand{\D}{\cl D}
\renewcommand{\P}{\cl P}
\newcommand{\bC}{\bb C}
\newcommand{\bN}{\bb N}
\DeclareMathOperator{\Ad}{Ad}
\DeclareMathOperator{\Ker}{Ker}
\DeclareMathOperator{\spn}{span}
\newcommand{\nor}[1]{\left\Vert #1\right\Vert}
\let\norm\nor
\newcommand{\la}{\langle}
\newcommand{\ra}{\rangle}
\newcommand{\vphi}{\varphi}
\newcommand{\ep}{\varepsilon}
\newcommand{\om}{\omega}
\newcommand{\Om}{\Omega}
\newcommand{\BH}{\mathcal{B}(H)}
\newcommand{\N}{\mathbb{N}}
\newcommand{\Z}{\mathbb{Z}}
\newcommand{\R}{\mathbb{R}}
\let\C\outer\renewcommand{\C}{\mathbb{C}}
\newcommand{\ten}{\otimes}
\newcommand{\oten}{\overline{\otimes}}
\newcommand{\TO}{\ensuremath{\mathrm{II}_1}}
\newcommand{\qtext}[1]{\quad\text{#1}\quad}
\newcommand{\qand}{\qtext{and}}
\let\phi\varphi
\let\nph\varphi
\let\epsilon\varepsilon
\newcommand{\revduality}[2]{{
    \left\langle#1,#2\right\rangle}} 
\newcommand{\duality}[2]{\revduality{#2}{#1}} 
\newcommand{\ip}[2]{{
    \left(#1,#2\right)}}
\newcommand{\ext}{\rm ext}
\DeclareMathOperator{\cbm}{CB}
\DeclareMathOperator{\cp}{CP}
\DeclareMathOperator{\ucp}{UCP}
\DeclareMathOperator{\locc}{LOCC}
\let\LOCC\locc
\let\UCP\ucp
\let\CP\cp
\let\mpar\marginpar
\def\marginpar#1{\mpar{#1}\ignorespaces}
\begin{document}

\title[State Convertibility in the von Neumann Algebra Framework]{State Convertibility in the von Neumann Algebra Framework}

\author[J. Crann]{Jason Crann}
\address{School of Mathematics \& Statistics, Carleton University, Ottawa, ON, Canada H1S 5B6}
\email{jasoncrann@cunet.carleton.ca}

\author[D. W. Kribs]{David W. Kribs}
\address{Department of Mathematics \& Statistics, University of Guelph, Guelph, ON, Canada N1G 2W1 \& Institute for Quantum Computing, University of Waterloo, Waterloo, ON, Canada N2L 3G1}
\email{dkribs@uoguelph.ca}

\author[R. H. Levene]{Rupert H. Levene}
\address{School of Mathematics and Statistics, University College Dublin, Belfield, Dublin 4, Ireland}
\email{rupert.levene@ucd.ie}

\author[I. G. Todorov]{Ivan G. Todorov}
\address{Mathematical Sciences Research Centre, Queen's University Belfast, Belfast BT7 1NN, United Kingdom,
  and School of Mathematical Sciences, Nankai University, 300071 Tianjin, China}
\email{i.todorov@qub.ac.uk}

\date{April 2020}

\subjclass[2010]{46L10, 46L30, 46N50, 47L90, 81P40, 81P45, 81R15}

\keywords{quantum state, entanglement, majorisation, local operations and classical communication, completely positive map, von Neumann algebra, singular values, trace vectors.}

\begin{abstract}
We establish a generalisation of the fundamental state
convertibility theorem in quantum information to the context of
bipartite quantum systems modelled by commuting semi-finite von
Neumann algebras. Namely, we establish a generalisation to this
setting of Nielsen's theorem on the convertibility of quantum
states under local operations and classical communication (LOCC)
schemes. Along the way, we introduce an appropriate generalisation
of LOCC operations and connect the resulting notion of approximate
convertibility to the theory of singular numbers and majorisation
in von Neumann algebras. As an application of our result in the
setting of $\TO$-factors, we show that the entropy of the singular
value distribution relative to the unique tracial state is an
entanglement monotone in the sense of Vidal, thus yielding a new
way to quantify entanglement in that context. Building on previous
work in the infinite-dimensional setting, we show that trace
vectors play the role of maximally entangled states for general
$\TO$-factors. Examples are drawn from infinite spin chains,
quasi-free representations of the CAR, and discretised versions of
the CCR.
\end{abstract}

\maketitle

\section{Introduction}\label{s_bc}

Quantum entanglement is a central notion in quantum information theory and a key resource in the applications that are driving efforts to develop quantum technologies. While there is a growing depth of understanding of the concept and its many potential uses, the theory of quantum entanglement remains an active, challenging, and fundamental area of investigation in quantum information theory with, of particular note here, relatively little progress having been made in the general infinite-dimensional and von Neumann algebra settings.

The mathematical theory that provides the foundation for these efforts rests in many ways on an understanding of how entanglement between quantum states can be transformed through various tasks and processes. A very natural and central question is to ask whether a given state, entangled between multiple parties, can be transformed by certain restricted classes of quantum operations to other types of entangled states, with restrictions determined by theoretical or physical limitations. This is a basic question that is relevant, for instance, in the development of any quantum communication scheme, realisations of quantum algorithms, physical implementations of quantum networks, etc.

As the simplest starting point in this subject, consider the scenario in which two parties $A$ and $B$ each have the ability to implement all local quantum operations, described mathematically by completely positive and trace-preserving maps on
the algebras of bounded linear operators on
their respective system Hilbert spaces $H_A$, $H_B$, but such that the parties are limited in that they can only communicate with each other using classical communication. An initial core problem then is to start with an entangled state
$\psi \in H_A \otimes H_B$ shared by the parties, and to determine what are the possible entangled states that $\psi$ can be converted to through local operations and classical communication (LOCC) between them.

This question has a neat matrix theoretic solution in the finite-dimensional case, known as Nielsen's Theorem \cite{nielsen}.
For every pure state $\psi \in H_A \otimes H_B$, let $\rho_\psi = \tr_B (\psi\psi^*)$ be the (in general, mixed) state on $H_A$ found by applying the partial trace map
over $H_B$ to the projection $\psi\psi^*$ with range the one-dimensional space of scalar multiples of $\psi$.
The eigenvalues of $\rho_\psi$ (including multiplicities) form a probability distribution and can be arranged in non-increasing order, thus giving rise to a real vector $\lambda_\psi$.
Nielsen's Theorem states that $\psi$ can be converted into another state $\phi$ by LOCC between $A$ and $B$ if and only if
$\lambda_\psi$ is majorised by $\lambda_\phi$, that is, all partial sums of values from $\lambda_\psi$ (respecting the non-increasing indexing) are bounded above by the corresponding partial sums of values from $\lambda_\phi$. Recall that $\rho_\psi$ is a pure (rank one) state if and only if ${\psi}$ is separable. Obviously if we start with a separable state ${\psi}$, then we can only transform it to another separable state via LOCC, and this case is easily captured by the theorem with $\lambda_\psi=(1,0,\ldots ,0)$. On the other hand, given any separable state ${\phi}$, any arbitrary state ${\psi}$ can be transformed to it via LOCC, in particular by making use of local depolarising maps on the individual systems.
The power of Nielsen's theorem lies in the fact that it gives a matrix and spectral theoretic description of which entangled states are attainable through LOCC when we start with a given entangled state. The theorem has far-reaching implications and applications
throughout finite-dimensional quantum information theory; indeed, it is one of the most important and widely used results in the entire field.

While the primary focus of research in quantum information has been on challenges and applications in the finite-dimensional and qubit setting for more than two decades now, ultimately general quantum mechanics is an infinite-dimensional theory that is rooted in the theory of von Neumann algebras \cite{vN}. Thus, one can reasonably expect that continued long-term progress in quantum information theory and its connections within theoretical physics will depend at least partly on the successful extension of central results in the field to the infinite-dimensional and general von Neumann algebra settings, with new peculiarities and connections uncovered along the way.  This is clearly a desirable goal, and there has been a recent reemergence of activity in this direction, including quantum error correction and privacy (e.g.~\cite{bkk2,cklt}), entropy theory (e.g.~\cite{bfs,hiaif1,hiaif2,longo,lx}), Bell inequalities (e.g.~\cite{jungep}), entanglement in quantum field theory (e.g.~\cite{hs} and the references therein), and most notably, connections with, and recent refutation of, Connes' embedding conjecture (e.g.~\cite{dpp,musath,jietal,jungeetal,slofstra}).

In this paper, we make new progress in this direction, establishing, as our main result, a generalisation of Nielsen's Theorem
to the context of von Neumann algebras, and more specifically for bipartite quantum systems modelled by commuting semi-finite von Neumann algebras,
say $\cl A$ and $\cl B$ \cite{haagerup2,t1,t2}.
We note that the case where $\cl A$ and $\cl B$ are separably acting factors of type I was considered in \cite{obnm}.
While some parts of the theory extend in a somewhat straightforward way, there are, as one would expect, significant technical challenges to overcome,
well beyond the generalisation provided in \cite{obnm}.
En route, we introduce an appropriate generalisation of LOCC operations \cite{clmow} to our context.
The setting for our version of Nielsen's Theorem is provided by
the theory of singular numbers \cite{fk} and majorisation \cite{h,h2} in von Neumann algebras.
We build our analysis on key aspects of operator algebra theory, such as the standard form of a von Neumann algebra, the theory of completely bounded maps, the Haagerup tensor product, and dilation theory \cite{fk,t2,blecher_smith,haagerup,haagerup2,h,ksw}.

We include a number of examples and applications of our results.
In particular, we show that the entropy of the singular value distribution relative to the unique tracial state of a type $\TO$-factor
is an entanglement monotone in the sense of Vidal \cite{v,op}, thus yielding a new way to quantify entanglement in that context. Building on previous work in the infinite-dimensional setting, we also show that trace vectors play the role of maximally entangled states for general $\TO$-factors. Examples are drawn from quantum measurement theory \cite{vN}, infinite spin chains \cite{keylsw,kmsw}, quasi-free representations of the CAR \cite{bcs,dg,dmm}, and discretised versions of the CCR \cite{arv,bkk2,Faddeev}.

This paper is organised as follows. The next section includes requisite preliminaries, focussed mainly on von Neumann algebra theory. In Section~\ref{s_locc},
we introduce the general notion of LOCC maps and derive some basic properties. In Section~\ref{s:conv}, we investigate approximate convertibility of states by LOCC maps, showing that we can restrict attention to one-way convertibility. This is used in Section~\ref{s:main} to establish our main result on state convertibility and majorisation; this section also includes several supporting technical results. Section~\ref{s_ttf} includes the aforementioned applications and examples. Other illustrative examples are presented throughout the paper. We finish with a brief outlook discussion on our results and the subject.

\section{Preliminaries}

Let $\cl A$ be a von Neumann algebra.
We denote by $\cl A_*$ its predual; thus, the elements of $\cl A_*$ are normal (that is, weak* continuous) linear functionals on $\cl A$.
If $\cl B$ is another von Neumann algebra with $\cl A\subseteq \cl B$,
a map $\Phi : \cl B\to \cl B$ is called an $\cl A$-bimodule map if $\Phi(axb) = a\Phi(x)b$, for all $a,b\in \cl A$ and all $x\in \cl B$.
We denote by
$\cp_{\cl A}^\sigma(\cl B)$ the cone of all normal completely positive $\cl A$-bimodule maps on $\cl B$.
We let $\ucp_{\cl A}^\sigma(\cl B)$ stand for the convex subset of all unital maps in $\cp^\sigma_{\cl A}(\cl B)$.
We set $\cp^\sigma(\cl B) = \cp_{\bb{C}I}^\sigma(\cl B)$ and $\ucp^\sigma(\cl B) = \ucp_{\bb{C}I}^\sigma(\cl B)$.
We call the elements of $\ucp^\sigma(\cl B)$ \emph{quantum channels} on $\cl B$.
We refer the reader to \cite{paulsen_book} for basics of the theory of completely positive and completely bounded maps and some standard notation.

We denote by $S(\cl A)$ the convex set of all normal states of $\cl A$,
and by $S_{\ext}(\cl A)$ the set of all pure normal states of $\cl A$.
If $\omega\in \cl A_*$ and $a\in \cl M$, define $a\omega, \omega a\in \cl A_*$ by
$(a\omega)(x) = \omega(xa)$ and $(\omega a)(x) = \omega(ax)$.
We sometimes use the duality pairing notation $\duality{\omega}{x}:=\omega(x)$ with angled brackets (in contrast, we use rounded parentheses for inner products in Hilbert spaces); in this notation, the bimodule action just described may
be written as $\duality{a\omega b}{x}=\duality{\omega}{bxa}$, $a,b\in \cl A$.

As usual, we let $\cl B(H)$ be the $C^*$-algebra of all bounded operators on a Hilbert space $H$, and set
$M_n=\B(\bC^n)$, the algebra of $n\times n$ matrices with complex entries. We assume throughout this paper that all Hilbert spaces under consideration are separable, sometimes mentioning this explicitly for emphasis.
For an element $a\in \cl B(H)$, we write $\Ad(a)$ for the map
given by $\Ad(a)(x) = axa^*$, where $a^*$ denotes the adjoint of~$a$; clearly, $\Ad(a) \in \cp^\sigma(\cl B(H))$.
If $\nph,\psi\in H$, we let $\nph\psi^*$ be the rank one operator on $H$ given by $(\nph\psi^*)(\xi) = (\xi,\psi)\nph$ (note that our inner products are linear in the first argument). Let $\cl T(H)$ be the space of all trace class operators in $\cl B(H)$ and ${\rm tr} : \cl T(H)\to \bb{C}$ be the trace.
We have a canonical
identification $\cl T(H) \equiv \cl B(H)_*$, given by letting $\duality{S}{T} = {\rm tr}(TS)$, $T\in \cl B(H)$,
$S\in \cl T(H)$.
We denote by $H_1$ the set of unit vectors in a Hilbert space~$H$.
If $\phi\in H$, we write $\omega_\phi\in \BH_*$ for the (positive, normal) functional given by
$\omega_\phi(x) = \ip{x\phi}{\phi}$.

Throughout the paper, we will let $\cl A\subseteq \BH$ be a von Neumann algebra, for some (separable) Hilbert space~$H$, with unit $1$, projection lattice $\mathcal P(\cl A)$ and
positive cone $\cl A^+$.
We will mainly be interested in the case when $\cl A$ is semi-finite, equipped with a normal semi-finite faithful trace $\tau$.
Let $\tilde{\cl A}$ be the *-algebra of all $\tau$-measurable operators \cite{fk}, that is,
the set of all densely defined closed operators $T : \cl D(T)\to H$, (where
$\cl D(T) \subseteq H$ is the domain of~$T$), affiliated with $\cl A$, with the property that for every $\epsilon > 0$
there exists a projection $e\in \cl A$ such that $\tau(1-e) \leq \epsilon$ and $eH \subseteq \cl D(T)$.
For $p\geq 1$, let
$$\cl A_p = \{a\in \cl A : \tau(|a|^p) < \infty\},$$
and let
$L^p(\cl A,\tau)$ be the completion of $\cl A_p$
with respect to the norm $\|\cdot\|_p$, given by
$\|a\|_p = \tau(|a|^p)^{1/p}$, $a\in \cl A_p$.
Set $L^{\infty}(\cl A,\tau) = \cl A$.
We will extensively use the fact that the elements of the space $L^p(\cl A,\tau)$ can be canonically identified with operators in $\tilde{\cl A}$ (see \cite{fk}).

Note that $L^2(\cl A,\tau)$ is a Hilbert space, which is separable since $\cl A$ is separably acting, with inner product given by
$$\ip{a}{b} = \tau(b^*a), \ \ \ a,b\in \cl A_2.$$
In fact,  $L^2(\cl A,\tau)$ is
the Hilbert space arising from the GNS construction applied to $\tau$.
The associated (normal, faithful) *-representation $\pi_\tau : \cl A \to \cl B(H)$ is given by
$$\pi_{\tau}(a)b = ab, \ \ \ b\in \cl A_2, \ a\in \cl A.$$
We will suppress the use of the notation $\pi_{\tau}$; in this way, we will consider $\cl A$ as a von Neumann subalgebra of $\cl B(L^2(\cl A,\tau))$.
We then have that $\cl A$ is in its standard form \cite{haagerup2}; we also say that $\A$ is standardly represented on~$L^2(\cl A,\tau)$. 
Working in this standard representation, let $\cl A'$ be the commutant of $\cl A$, and let
$J : L^2(\cl{A},\tau) \to L^2(\cl{A},\tau)$ be the associated conjugate linear isometry
with the property that $\cl{A}' = J\cl{A}J$.
Note that $J$ is the (unique) extension of the
adjoint map $a\mapsto a^*$ on $\cl A_2$, 
and for $\psi\in L^2(\cl A,\tau)$, we have that $\psi^* = J\psi$, where the
left hand side in the latter identity is the adjoint of the linear operator $\psi$ (by which we mean the element of
$\tilde{\cl A}$ canonically identified with~$\psi$). 
For $\xi,\eta\in L^2(\cl A,\tau)$, we have
\begin{equation}\label{eq_jaj}
(J\xi,J\eta) = (\xi^*,\eta^*) = \tau(\eta\xi^*) = \tau(\xi^*\eta) = (\eta,\xi).
\end{equation}
The map
$\pi'_\tau : \cl{A}\rightarrow \cl{B}(L^2(\cl{A},\tau))$, given by
$$\pi'_\tau(a)b = ba, \ \ \ b\in \cl A_2, \ a\in \cl A,$$
is a faithful anti-*-homomorphism,
satisfying $\pi'_\tau(a) = Ja^*J$, $a\in \cl{A}$ (see \cite[Theorem V.2.22]{t1}).
Let $R : \cl A'\rightarrow \cl A$ be the anti-*-isomorphism given by
$$R(a') = Ja'^*J, \ \ \ a'\in\cl A'.$$

We note that $L^1(\cl A,\tau)$ can be identified in a canonical way with the predual of $\cl A$.
In fact, if $\omega\in \cl A_*$ then there exists a unique $\rho_{\omega}\in L^1(\cl A,\tau)$ such that
$$\omega(a) = \tau(\rho_{\omega}a), \ \ \ a\in \cl A.$$
Here, and in the sequel, we use the fact that $L^1(\cl A,\tau)$ is an $\cl A$-bimodule, that is,
given $\rho\in L^1(\cl A,\tau)$ and $a,b\in \cl A$, we have that $a\rho b$ is a well-defined $\tau$-measurable operator and belongs to $L^1(\cl A,\tau)$.
Note that if $\omega\in \cl A_*^+$, then $\rho_{\omega}$ is a positive (in general unbounded) operator,  which we call the
\emph{density operator} of $\omega$.

If $x\in \tilde{\cl A}$ and $t > 0$, the \emph{$t$-th singular value} $\mu_t(x)$
of $x$ is defined by letting
\begin{equation*}
\mu_t(x) = \mu_t(x;\A,\tau)= \inf\{\|xp\| : p\in\mathcal P(\cl A),\, \tau(1-p)\leq t\}.
\end{equation*}
The \emph{singular value function of~$x$},
namely $\mu(x)\colon (0,\infty)\to (0,\infty)$, $t\mapsto \mu_t(x)$, is decreasing and continuous from the right \cite[Lemma 2.5]{fk}.
If $x,y\in \tilde{\cl A}^+$,
we say that $x$ is \emph{majorised} by $y$ if
$$\int_0^s \mu_t(x)\, dt \leq  \int_0^s \mu_t(y)\, dt, \ \ \ 0< s \leq \infty;$$
we write $x\prec y$ to designate the fact that $x$ is majorised by $y$ and $\tau(x) = \tau(y)$.
We refer to \cite{h} for extensive details on majorisation of elements of $\tilde{\cl A}^+$ and to
\cite{fk} for background on the theory of singular values.

Let $\tau' : \cl A'\to \bb{C}$ be the functional given by $\tau'(a') = \tau(R(a'))$, $a'\in \cl A'$.
Then $\tau'$ is a normal faithful
semi-finite trace on $\cl A'$.
Since $\cl A'\subseteq \cl B(L^2(\cl A,\tau))$, the elements of $L^1(\cl A',\tau')$ can be identified with
linear densely defined operators on $L^2(\cl A,\tau)$.
Given a normal functional $\omega'$ on $\cl A'$, there exists, by the preceding discussion, a (unique) element $\rho'_{\omega'}\in L^1(\cl A',\tau')$
such that $\omega'(b) = \tau'(\rho'_{\omega'}b)$, $b\in \cl A'$.
The constructions described above can be
performed relative to the pair $(\cl A',\tau')$;
in particular, one may define the corresponding singular values
$\mu'_t(x'):=\mu_t(x';\A',\tau')$ associated with any $\tau'$-measurable operator $x'$, relative to $(\cl A',\tau')$.

We finish this section with two important examples of the previous notions.

\begin{example} $\cl A=L^\infty(X,m)$, for a $\sigma$-finite measure space $(X,m)$. In this case, $\tau$ is integration by the measure $m$
and, for any $p\geq 1$, $L^p(\cl A,\tau)=L^p(X,m)$.
In particular, the standard representation is given by the pointwise action of $L^\infty(X,m)$ on $L^2(X,m)$.
For a non-negative element $f\in L^1(X,m)$, its singular value function $\mu_t(f)$ satisfies
$$\mu_t(f)=\inf\left\{\strut s\geq 0\mid m(\{x\in X\mid f(x)>s\})\leq t\right\};$$
in other words, $t\mapsto \mu_t(f)$ is the non-increasing rearrangement of $f$.
\end{example}

\begin{example} $\cl A=\cl B(H)$ for a Hilbert space $H$. Here, the (essentially unique) normal semi-finite faithful trace $\tau$ is the canonical trace $\tr$.
In this case, for $p\geq 1$, the space $L^p(\cl A,\tau)$ coincides with the Schatten $p$-class $\cl S_p(H)$.
In particular, the standard representation of $\cl B(H)$ is given by the left multiplication action on the Hilbert--Schmidt operators $\cl S_2(H)$. Equivalently, fixing a unitary equivalence $\cl S_2(H)\cong H\ten \overline{H}$
(where $\overline{H}$ is the conjugate Hilbert space of $H$),
the standard representation is the canonical action of $\cl B(H)\ten 1_{\overline{H}}$ on $H\ten\overline{H}$.
Given a positive element $\rho\in \cl T(H)=\cl S_1(H)$, its singular value function $\mu_t(\rho)$ satisfies
$$\mu_t(\rho)=\sum_{n=1}^\infty\lambda_n \chi_{[n-1,n)}(t),$$
where $\lambda_n$ is the $n^{th}$ largest eigenvalue of $\rho$ (including multiplicity).
\end{example}

\section{Local Operations and Classical Communication}\label{s_locc}

In this section, we introduce the class of maps that realise local operations and classical communication (LOCC)
in our general setting and establish some of their properties needed in the sequel.
The origins of our approach lie within the development of algebraic quantum field theory
where the framework is typically encoded in two commuting *-subalgebras of a larger $C^*$-algebra.
In this paper,
a bipartite quantum system is given by a Hilbert space $H$,
together with a von Neumann algebra $\cl A\subseteq\cl B(H)$, and its commutant $\cl B:=\cl A'$.
In the language of \cite[\S4]{kmsw}, this forms a simple, bipartite system satisfying Haag duality.

The standard intuition comes from viewing $\cl A$ and $\cl B$ as the observable algebras
of parties Alice and Bob, respectively, which have joint access to a quantum system modelled on the Hilbert space $H$. For example, when $H=H_A\ten H_B$ for Hilbert spaces $H_A$ and $H_B$, then $\cl A=\cl B(H_A)\ten 1_{H_B}$ and $\cl B = 1_{H_A}\ten \cl B(H_B)$ define the canonical bipartite system structure in the tensor product framework.

We now examine a suitable generalisation of local operations and classical communication (LOCC) in this general bipartite setting,
inspired by the approach in \cite{clmow} and 
related to, but slightly different from, the proposed notion in \cite[\S5]{vw}.

\begin{definition}\label{d_locc} 
Let $\cl A\subseteq\cl B(H)$ be a von Neumann algebra on a Hilbert space $H$, and let $\B=\A'$.
\begin{enumerate}[(i)]
\item A  \emph{one-way right local map relative to $\cl A$} is  a normal completely positive map 
$\Theta : \cl B(H)\to \cl B(H)$ of the form 
$\Theta =  \Phi  \circ\Psi$, where
$\Phi\in \ucp^\sigma_{\cl A}(\cl B(H))$ and $\Psi\in \cp^\sigma_{\cl B}(\cl B(H))$.
Similarly, a  \emph{one-way left local map relative to $\cl A$} is  a normal completely positive map 
$\Theta : \cl B(H)\to \cl B(H)$ of the form 
$\Theta =  \Phi  \circ\Psi$, where
$\Phi\in \cp^\sigma_{\cl A}(\cl B(H))$ and $\Psi\in \ucp^\sigma_{\cl B}(\cl B(H))$.

\item An \emph{instrument} is a collection $\cl I = (\Theta_k)_{k\in \bb{N}}$ of normal completely 
positive maps on $\cl B(H)$ such that, for every $x\in \cl B(H)$, the series
$\sum_{k=1}^{\infty}  \Theta_k(x)$ converges in the weak* topology to  a limit, say $\Theta_{\cl I}(x)$, and  the map 
$x\to \Theta_{\cl I}(x)$ is a normal unital completely  positive map. 
In this case, we sometimes write $\sum_{k=1}^{\infty}\Theta_k$ to denote the map $\Theta_{\cl I}$.
We will identify two instruments if they differ only by a bijective relabelling of the index set.

\item A  \emph{one-way right instrument relative to $\cl A$} is an instrument $\cl I = (\Theta_k)_{k\in \bb{N}}$,
where each of the maps $\Theta_k$ is a one-way right local map relative to $\cl A$.
A  \emph{one-way right LOCC map relative to $\cl A$} is  a normal completely positive map 
$\Theta : \cl B(H)\to \cl B(H)$ of the form $\Theta = \Theta_{\cl I}$, where  
$\cl I$ is a one-way right instrument relative to $\cl A$. 

Similarly, a \emph{one-way left instrument relative to $\cl A$} is an instrument $\cl I = (\Theta_k)_{k\in \bb{N}}$,
where each of the maps $\Theta_k$ is a one-way left local map relative to $\cl A$, and 
a  \emph{one-way left LOCC map relative to $\cl A$} is a normal completely positive map 
$\Theta : \cl B(H)\to \cl B(H)$ of the form $\Theta = \Theta_{\cl I}$, where  
$\cl I$ is a one-way left instrument relative to $\cl A$. 

\item An instrument $(\Gamma_j)_{j\in \bb N}$ is a \emph{coarse-graining} of an instrument $(\Theta_k)_{k\in \bb N}$ if there is a partition $\bN=\bigcup_{j\in \bN}S_j$ so that $\Gamma_j=\sum_{k\in S_j}{\Theta_k}$ for $j\in \bb N$, where each series converges point-weak*.

\item An instrument $\cl I$ is called \emph{one-way local relative to $\cl A$} if $\cl I$ is 
either a one-way right instrument relative to $\cl A$ or a one-way left instrument relative to $\cl A$. We say that an instrument $\cl J$ is \emph{linked} to an instrument $\cl I =  (\Theta_k)_{k\in \bb{N}}$ if 
there exist one-way instruments $(\Theta_{ki})_{i\in \bb{N}}$, $k\in \bb{N}$, such  that  
$\cl J$ is a coarse-graining of the instrument $(\Theta_k\circ \Theta_{ki})_{i,k\in \bb{N}}$. 

\item A map $\Theta\in \ucp^\sigma(\cl B(H))$ is an \emph{LOCC map relative to $\cl A$}  if 
there exists a sequence $(\cl I_0, \dots,\cl I_n)$ of instruments  such  that $\cl I_0$ is a 
one-way local instrument relative to $\cl A$, $\cl I_{l+1}$ is linked to $\cl I_l$, $l = 0,\dots,n-1$, 
and $\Theta = \Theta_{\cl I_n}$.

\end{enumerate}
We denote by $\locc(\cl A)$ the set of all LOCC maps relative to $\cl A$. We also write $\locc^r(\A)$ and $\locc^l(\A)$ for the subsets of $\locc(\A)$ consisting of the one-way right and left LOCC maps relative to~$\A$, respectively. 
Thus, any $\Theta$ in $\locc^r(\A)$ is given by a point weak-* convergent series
\begin{equation}\label{eq:locc-r} \Theta=\sum_{k\in \bb N} \Phi_k\circ \Psi_k
\end{equation}
where $\Phi_k\in \ucp^\sigma_{\cl A}(\cl B(H))$ and $\Psi_k\in \cp^\sigma_{\cl B}(\cl B(H))$.
\end{definition}

The locality of an $\locc(\cl A)$ operation is reflected through the bimodule structure of its implementing maps. For example, if $\Theta$ is a map of the form in \eqref{eq:locc-r}, then Alice's local operations are modelled by the maps
$\Psi_k\in\cp^\sigma_{\cl B}(\cl B(H))$, and Bob's local operations by the channels $\Phi_k\in\ucp^\sigma_{\cl A}(\cl B(H))$.  Since the $\Phi_k$ are $\cl A$-bimodule maps, they do not affect any of Alice's observables and, as shown in Proposition~\ref{p_comp}, they admit Kraus decompositions with operators belonging to Bob's observable algebra $\cl B$. A similar intuition is applied for Alice's local operations $\Psi_k$, which admit Kraus decompositions with operators belonging to Alice's observable algebra $\cl A$. In the Schr\"{o}dinger picture, the map $\Theta_*$ is interpreted as the following operation:
\begin{itemize}
\item Alice performs a local measurement system from $\cl A$ with $(\Psi_k)_*(\rho)$ describing the (unnormalised) post-measurement state;
\item Alice sends the result of the measurement, labelled by $k\in\bb{N}$, to Bob;
\item Bob performs the local operation $(\Phi_k)_*$, conditional upon the result $k$ of Alice's measurement.
\end{itemize}
The map $\Theta$ simply models the above protocol in the Heisenberg picture. See Example~\ref{ex_pol} below for a more explicit example.

A general LOCC protocol relative to $\cl A$ is interpreted as a finite-round protocol where each round is as above with the roles of Alice and Bob potentially reversed, and the measurement systems (and therefore subsequent operations) conditional upon the measurement outcomes in the previous rounds. In the case where $H_A$ and $H_B$ are finite dimensional Hilbert spaces, $H = H_A\otimes H_B$, $\cl A = \cl B(H_A)\otimes 1$ and
$\cl B = 1\otimes \cl B(H_B)$, Definition~\ref{d_locc} reduces to the usual notions as described, for example,
in \cite{clmow}, and we recover the usual operational interpretation of bipartite LOCC operations.

\begin{example}\label{ex_pol}
Let $H$ be a Hilbert space,
$\cl A\subseteq\cl B(H)$ be a von Neumann algebra,
and $\{a_k\mid k\in\N\} \subseteq \cl A$
be a countably infinite measurement system, that is, a sequence in $\cl A$
for which $\sum_{k=1}^\infty a_k^*a_k = 1$ in the weak* topology.
For each $k\in\N$, let $\Psi_k=\Ad(a_k^*)$ on $\cl B(H)$ and let $\Phi_k\in \ucp^\sigma_{\cl A}(\cl B(H))$ be a channel.
Then the series
$\Theta = \sum_{k=1}^\infty\Phi_k\circ\Psi_k$ defines a one-way right LOCC map relative to $\cl A$.
Indeed,
$$\sum_{k=1}^N \Phi_k\circ\Psi_k(1) = \sum_{k=1}^N \Psi_k\circ\Phi_k(1) = \sum_{k=1}^N a_k^* a_k \to_{N\to\infty} 1$$
in the weak* topology.
If $x\in \cl B(H)^+$ then $x\leq \|x\|1$ and hence the partial sums
$\sum_{k=1}^N \Phi_k\circ\Psi_k(x)$ are dominated by $\|x\|1$. Since they form an increasing sequence,
the series $\sum_{k=1}^{\infty} \Phi_k\circ\Psi_k(x)$ converges in the weak* topology.
If $x\in \cl B(H)$ is arbitrary then, using polarisation, we can write $x = \sum_{l=1}^4 \lambda_l x_l$, where
$x_l\in \cl B(H)^+$ and $\lambda_l\in \bb{C}$, $l = 1,2,3,4$.
It follows that the partial sums
$$\sum_{k=1}^N \Phi_k\circ\Psi_k(x) = \sum_{k=1}^N \sum_{l=1}^4 \lambda_l \Phi_k\circ\Psi_k(x_l) =
\sum_{l=1}^4 \sum_{k=1}^N  \lambda_l \Phi_k\circ\Psi_k(x_l)$$
form a weak* convergent sequence.

The predual $\Theta_* : \cl T(H)\to \cl T(H)$ of the map $\Theta$ is given by
$$\Theta_*(\rho) = \sum_{k=1}^\infty(\Phi_k)_*(a_k\rho a_k^*), \ \ \ \rho\in \cl T(H),$$
so one can think of $\Theta_*$ as a protocol where
Alice makes a measurement corresponding to the system $\{a_k\mid k\in\N\}$,
sends the result $k$ to Bob, who then applies $(\Phi_k)_*$. Below we show that any one-way right LOCC map relative to $\cl A$ is of this form, similar to the finite-dimensional setting.

The use of measurements with countably many outcomes is natural in an infinite-dimensional context.
Indeed, even the measurement of an observable modelled by a (possibly unbounded) self-adjoint operator
with continuous spectrum can, within an arbitrarily small amount of error,
be modelled by an observable with countably many disjoint outcomes \cite[\S III.3]{vN}.
\end{example}

\begin{remark}\label{r_locc} Let $\Theta\in \LOCC^r(\A)$ and let $\Phi_k$, $\Psi_k$ be as in~\eqref{eq:locc-r}. Then
$\Phi_k(a) = a \Phi_k(1) = a$, $a\in \cl A$, $k\in\N$, and, in the weak* topology we have
\begin{equation}\label{eq_psi1}
1 = \Theta(1) = \sum_{k=1}^\infty \Psi_k\circ \Phi_k(1) = \sum_{k=1}^{\infty} \Psi_k(1).
\end{equation}
It follows that, for every positive element $x\in \cl B(H)$, the partial sums
$\sum_{i=1}^m \Psi_k(x)$ are norm bounded; since they form an increasing sequence, they converge in the weak* topology.
Using polarisation, we conclude, as in Example~\ref{ex_pol}, that
the sequence $\left(\sum_{i=1}^m \Psi_k(x)\right)_{m\in \bb{N}}$
converges in the weak* topology for every $x\in \cl B(H)$.

\end{remark}

A notion of LOCC operation for general bipartite systems was proposed by Verch--Werner in \cite[\S5]{vw}.
There, a bipartite system is modelled by commuting unital $C^*$-subalgebras $\cl A$ and $\cl B$ of an ambient unital $C^*$-algebra $\cl C$. In this setting, they defined a one-way right LOCC map between bipartite systems
$(\cl A_1,\cl B_1, \cl C_1)$ and $(\cl A_2,\cl B_2, \cl C_2)$, where
$\cl A_i$ and $\cl B_i$ are commuting C*-subalgebras of $\cl C_i$, $i = 1,2$,
by a UCP map $\Theta:\cl C_1\rightarrow\cl C_2$, for which there exist finitely many
completely positive maps $\Psi_k:\cl A_1\rightarrow\cl A_2$ and UCP maps $\Phi_k:\cl B_1\rightarrow\cl B_2$ satisfying
\begin{equation}\label{e:vw}\Theta(ab)=\sum_{k}\Psi_k(a)\Phi_k(b), \ \ \ a\in\cl A_1, \ b\in\cl B_1.\end{equation}
In the special case when $\cl C_1=\cl C_2=\cl B(H)$, $\cl A_1=\cl A_2 =: \cl A$ is a von Neumann algebra, and $\cl B_1=\cl B_2=\cl A'$,
our definition of a one-way right LOCC map relative to $\cl A$ satisfies this condition, albeit, allowing
a countable summation over a \lq\lq classical'' index $k$.
This follows from the fact that any normal
completely positive $\cl A'$-bimodule map on $\cl B(H)$ admits a Kraus decomposition with operators from $\cl A$
(see e.g.~\cite{blecher_smith,haagerup}), and similarly for $\cl B$.

If, in addition, one assumes that $\cl A$ and $\cl B$ are injective factors, then by \cite[Theorem 4.2]{cs}, any
completely positive map $\Psi:\cl A\rightarrow \cl A$ admits a net of
completely positive elementary operators $\Psi_i:\cl A\rightarrow\cl A$
(i.e., operators admitting finitely many Kraus operators from $\cl A$)
satisfying $\norm{\Psi_i}_{cb} \leq \norm{\Psi}_{cb}$ and $\Psi_i\rightarrow\Psi$ in the point weak* topology of $\cbm(\cl A)$.
The maps $\Psi_i$ admit canonical extensions to maps in $\cp^\sigma_{\cl A'}(\cl B(H))$ (through their finite Kraus decompositions), so that we may approximate the
(potentially non-normal) completely positive maps $\Psi_k$ occurring in \eqref{e:vw} by normal maps satisfying our bimodule requirements.
Similar considerations hold for the maps $\Phi_k$.
Hence, in the case of injective factors, one may view the proposed definition of
Verch--Werner as a \lq\lq limit case'' of ours. Note that by \cite[Remark 4.3]{cs},
when $\cl A$ is an injective factor of type II or III, it is \textit{not} true that every \textit{normal} completely positive map
$\Psi : \cl A\rightarrow\cl A$ extends to a \textit{normal} completely positive map $\widetilde{\Psi}\in\cp^\sigma_{\cl A'}(\cl B(H))$.

\begin{proposition}\label{p_comp}
  Let $\A\subseteq \BH$ be a von Neumann algebra on a separable Hilbert space~$H$.
  \begin{enumerate}[(i)]
  \item The class $\LOCC^r(\A)$ of one-way right LOCC maps is closed under
    finite compositions.
  \item  The maps $\Psi_k$ in~\eqref{eq:locc-r} can be taken of the form
$\Psi_k = \Ad(a_k^*)$, for some $a_k\in \cl A$, $k\in \bb{N}$,
with $\sum_{k=1}^{\infty} a_k^*a_k = 1$ in the weak* topology.
\item  The maps $\Phi_k$ in~\eqref{eq:locc-r} can be taken of the form $\Phi_k=\sum_{i=1}^\infty \Ad(c_{ki}^*)$, a point weak*-convergent series, for some $c_{ki}\in \A'$, $k,i\in \bN$, with $\sum_{i=1}^\infty c_{ki}^*c_{ki}=1$ in the weak* topology for every $k\in \bN$.
\end{enumerate}
\end{proposition}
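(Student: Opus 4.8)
The plan is to reduce all three parts to the two Kraus-decomposition facts recalled just before the proposition: a normal completely positive $\A'$-bimodule map on $\BH$ admits a Kraus decomposition with operators from $\A$, and, symmetrically, a normal completely positive $\A$-bimodule map admits one with operators from $\A'=\B$. I would treat (iii) and (ii) first, as essentially direct applications, and then combine the resulting descriptions with a commutation lemma to obtain (i).

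For (iii), apply the Kraus fact to each $\Phi_k\in\ucp^\sigma_\A(\BH)$ to write $\Phi_k=\sum_i\Ad(d_{ki})$ (point weak*) with $d_{ki}\in\A'$; setting $c_{ki}=d_{ki}^*$ gives $\Phi_k=\sum_i\Ad(c_{ki}^*)$, and unitality of $\Phi_k$ yields $\sum_i c_{ki}^*c_{ki}=\Phi_k(1)=1$. For (ii), apply the Kraus fact to each $\Psi_k\in\cp^\sigma_\B(\BH)$ to write $\Psi_k=\sum_j\Ad(b_{kj})$ with $b_{kj}\in\A$. A short check shows each $\Ad(b_{kj})$ lies in $\cp^\sigma_\B(\BH)$, since its Kraus operator lies in $\A$ and hence commutes with $\B$; thus the doubly-indexed family $(\Phi_k\circ\Ad(b_{kj}))_{k,j}$ consists of one-way right local maps, with companion maps the unchanged $\Phi_k$. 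I would then argue that this family is a one-way right instrument whose associated map is $\Theta$: for positive $x$ the interchange $\sum_{k,j}\Phi_k(\Ad(b_{kj})(x))=\sum_k\Phi_k(\Psi_k(x))=\Theta(x)$ is justified by normality of $\Phi_k$ together with positivity (the partial sums increase and are dominated by $\Theta(x)$), and the general case follows by polarisation. Relabelling the index set $\N\times\N$ as $\N$ and putting $a_{kj}=b_{kj}^*$ gives single-indexed maps $\Ad(a_m^*)$ with $\sum_m a_m^*a_m=\sum_k\Psi_k(1)=\Theta(1)=1$, using \eqref{eq_psi1}.

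The crux is (i), and the key observation is a commutation lemma: if $\Psi\in\cp^\sigma_\B(\BH)$ and $\Phi\in\cp^\sigma_\A(\BH)$ then $\Psi\circ\Phi=\Phi\circ\Psi$. This is immediate from the two Kraus facts: writing $\Psi=\sum_p\Ad(b_p)$ with $b_p\in\A$ and $\Phi=\sum_q\Ad(d_q)$ with $d_q\in\A'$, each composite summand is $\Ad(b_pd_q)=\Ad(d_qb_p)$ because $b_p$ and $d_q$ commute, and the resulting positive double series may be summed in either order. Granting this, take $\Theta_1=\sum_k\Phi_k\circ\Psi_k$ and $\Theta_2=\sum_l\Phi_l'\circ\Psi_l'$ in $\LOCC^r(\A)$ and expand $\Theta_2\circ\Theta_1=\sum_{l,k}\Phi_l'\circ\Psi_l'\circ\Phi_k\circ\Psi_k$. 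Applying the commutation lemma to the middle pair $\Psi_l'\circ\Phi_k$ rewrites each term as $(\Phi_l'\circ\Phi_k)\circ(\Psi_l'\circ\Psi_k)$. Here $\Phi_l'\circ\Phi_k\in\ucp^\sigma_\A(\BH)$ and $\Psi_l'\circ\Psi_k\in\cp^\sigma_\B(\BH)$, since each of $\ucp^\sigma_\A(\BH)$ and $\cp^\sigma_\B(\BH)$ is closed under composition (the bimodule property, normality, complete positivity, and unitality all pass to composites). Hence each term is a one-way right local map, and the same positivity-plus-normality bookkeeping as in (ii) shows the doubly-indexed family is a one-way right instrument summing to $\Theta_2\circ\Theta_1$; relabelling $\N\times\N$ as $\N$ settles the case of two maps, and an obvious induction handles arbitrary finite compositions.

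I expect the main obstacle to be one of care rather than depth: verifying that the rearranged doubly-indexed families genuinely satisfy the instrument axioms of Definition~\ref{d_locc}(ii), namely point weak*-convergence to a normal unital completely positive map together with order-independence of the double sums. All of this rests on the uniform bound by $\Theta(1)=1$ on positive arguments, normality of the outer maps to justify interchanging summations, and polarisation to pass from positive to general $x$; the commutation lemma then supplies the one genuinely structural ingredient needed for (i).
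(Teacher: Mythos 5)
Your proposal is correct and follows essentially the same route as the paper: Haagerup/Blecher--Smith Kraus decompositions with operators from $\A$ (resp.\ $\A'$) give (ii) and (iii) after the positivity-plus-normality rearrangement of the doubly indexed series, and the commutation identity $\Phi\circ\Psi=\Psi\circ\Phi$ for $\Phi\in\cp^\sigma_{\A}(\BH)$, $\Psi\in\cp^\sigma_{\B}(\BH)$ is exactly the paper's key step for (i). The only cosmetic difference is in proving that identity: the paper decomposes only $\Psi$ and then uses the $\A$-bimodule property of $\Phi$ directly, which avoids having to resum a second infinite Kraus expansion, whereas you decompose both maps and commute the Kraus operators; both work.
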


\begin{proof}
Let, as before, $\cl B = \cl A'$.
We first claim that if $\Phi \in \cp_{\cl A}^\sigma(\cl B(H))$ and $\Psi \in \cp_{\cl B}^\sigma(\cl B(H))$,
then
\begin{equation}\label{eq_comm}
\Phi \circ\Psi = \Psi \circ\Phi.
\end{equation}
Indeed, since~$H$ is separable, by
\cite{haagerup} (see also \cite{blecher_smith}), there exists
a bounded column operator $(a_{i})_{i\in \bb{N}}$ with entries in $\cl A$, such that
$$\Psi(x) = \sum_{i=1}^{\infty} a_{i}^* x a_{i}, \ \ \ x\in \cl B(H),$$
where the series converges in the weak* topology.
For every $x\in \cl B(H)$ we now have
$$\Phi\circ\Psi(x) = \Phi \left(\sum_{i=1}^{\infty} a_{i}^* x a_{i}\right)
= \sum_{i=1}^{\infty} \Phi(a_{i}^* x a_{i}) = \sum_{i=1}^{\infty} a_{i}^* \Phi(x) a_{i}=\Psi\circ \Phi(x),$$
showing \eqref{eq_comm}.

Let $\Theta \in \LOCC^r(\A)$, with corresponding maps
$\Phi_k\in \UCP^\sigma_{\A}(\BH)$ and $\Psi_k\in \CP^\sigma_{\B}(\BH)$ for $k\in \bN$, as in~\eqref{eq:locc-r}.
Write
$$\Psi_k(x) = \sum_{i=1}^{\infty} a_{ki}^* x a_{ki}, \ \ \ x\in \cl B(H),$$
for some bounded column operator $(a_{ki})_{i\in \bb{N}}$ with entries in $\cl A$~\cite{haagerup}.
By~\eqref{eq_psi1}, $\Psi_k(1) \leq 1$, and hence the column operator $(a_{ki})_{i\in \bb{N}}$
is contractive. Thus, $a_{ki}$ is a contraction for all $k,i\in \bb{N}$.
Set $\Phi_{ki} = \Phi_k$ for all $k,i\in \bb{N}$ and note that, in the weak* topology, we have
$$\Theta(x) = \lim_{p\to\infty} \lim_{q\to\infty} \sum_{k=1}^p \sum_{i=1}^q \Phi_{ki}\circ \Ad(a_{ki}^*)(x), \ \ \ x\in \cl B(H).$$
Let $x\in \cl B(H)^+$ and $\rho\in \cl T(H)^+$. Then the double limit
\begin{equation}\label{eq_doub}
\lim_{p\to\infty} \lim_{q\to\infty} \sum_{k=1}^p \sum_{i=1}^q
\revduality{\Phi_{ki}\circ \Ad(a_{ki}^*)(x)}{\rho}
\end{equation}
exists. Since the terms of the sequence in \eqref{eq_doub} are positive,
the limit
$$\lim_{L\to\infty}  \sum_{k,i=1}^L \revduality{\Phi_{ki}\circ \Ad(a_{ki}^*)(x)}{\rho}$$
exists. Thus, the partial sums
$\sum_{k,i=1}^L  \Phi_{ki}\circ \Ad(a_{ki}^*)(x)$ converge in the weak* topology for every $x\in \cl B(H)^+$
and hence, by the polarisation identity, for every $x\in \cl B(H)$.
It is now straightforward to see that the limit coincides with $\Theta(x)$.
Note, moreover, that identity \eqref{eq_psi1} shows that
$\sum_{k,i=1}^{\infty} a_{ki}^* a_{ki} = 1$ in the weak* topology.
This establishes~(ii). The assertion in (iii) follows by considering the Kraus decomposition
of the maps $\Phi_k$.

To show~(i), assume that $\Theta_i\in \LOCC^r(\A)$ and
let $\Phi_k^{(i)}$, $\Psi_k^{(i)}$, $k\in \bb{N}$, be the maps as in~\eqref{eq:locc-r},
associated with $\Theta_i$, $i = 1,2$.
By \eqref{eq_comm}, for every $x\in \cl B(H)$, we have that, in the weak* topology,
\begin{eqnarray*}
(\Theta_1\circ \Theta_2)(x)
& = &
\lim_{p\to \infty} \lim_{q\to \infty} \sum_{k=1}^p \sum_{l=1}^q \left(\Phi_k^{(1)}\circ \Psi_k^{(1)} \circ \Phi_l^{(2)}
\circ \Psi_l^{(2)}\right)(x)\\
& = &
\lim_{p\to \infty} \lim_{q\to \infty} \sum_{k=1}^p \sum_{l=1}^q
\left(\Phi_k^{(1)}\circ \Phi_l^{(2)}  \circ  \Psi_k^{(1)}\circ \Psi_l^{(2)}\right)(x).
\end{eqnarray*}
Set $\Phi_{kj} = \Phi_k^{(1)}\circ \Phi_l^{(2)}$ and $\Psi_{kj} = \Psi_k^{(1)}\circ \Psi_l^{(2)}$.
An argument similar to the one in the previous paragraph now implies that
$$(\Theta_1\circ \Theta_2)(x) = \sum_{k,l=1}^{\infty}  (\Phi_{kj} \circ \Psi_{kj})(x), \ \ \ x\in \cl B(H),$$
in the weak* topology, so $\Theta_1\circ \Theta_2\in \LOCC^r(\A)$.
\end{proof}

\begin{remark}\label{r_mirror} (i) The expression of one-way right LOCC maps given in Proposition~\ref{p_comp}\,(ii)
reflects the notion of fine-graining of LOCC channels described in~\cite{clmow}.

(ii)  By symmetry, observations analogous to those above for one-way right LOCC maps
also hold for the one-way left LOCC maps.
\end{remark}

\section{State Convertibility via $\locc(\A)$}\label{s:conv}

Having established an appropriate generalisation of LOCC operations in the preceding section,
we now define the corresponding notions of convertibility.

\begin{definition}\label{d_conv}
Let $\cl A\subseteq\cl B(H)$ be a von Neumann algebra on a Hilbert space $H$, and let $\psi, \vphi\in H_1$.
\begin{enumerate}[(i)]
\item We say that $\psi$ \emph{is convertible to} $\vphi$ \emph{via} $\locc(\cl A)$ if
there exists $\Theta\in \locc(\cl A)$ such that $\Theta_*(\om_\psi) = \om_\vphi$.

\item We say that $\psi$ \emph{is approximately convertible to} $\vphi$ \emph{via} $\locc(\cl A)$
if for every $\varepsilon > 0$ there exists $\Theta\in \locc(\cl A)$ such that $\nor{\Theta_*(\om_\psi)- \om_\vphi}<\ep$.
\end{enumerate}
We also make analogous definitions with $\locc^l(\A)$ and $\locc^r(\A)$ in place of $\locc(\A)$.
\end{definition}

The goal of the rest of this section is to show that approximate convertibility can be realised by using only one-way LOCC maps
(see Corollary~\ref{c_oneway}). This generalises to the commuting operator framework
a result of Lo-Popescu \cite{lp} for finite-dimensional bipartite systems.
We note that another generalisation of this theorem was established in \cite{obnm}, which we recover from our results
by taking the special case $\A=\BH$ in its standard representation, for $H$ a separable Hilbert space.
 The essential features of our argument, similar to the finite-dimensional case, are the symmetry and polar decomposition induced from the standard
form of a von Neumann algebra \cite{haagerup2}, as highlighted in Section~\ref{s_bc}. We begin with a few preparatory results.

Let $H$ be a Hilbert space, $\cl A\subseteq\cl B(H)$ be a (semi-finite) von Neumann algebra equipped with a
normal semi-finite faithful trace $\tau$, and $\psi\in H$.
To avoid double subscripts, we will write $\rho_\psi\in L^1(\A,\tau)$ for the density operator
$\rho_{\omega_\psi}$ arising from the restriction of the vector state $\om_\psi$ to $\cl A$.
In the case where $H=L^2(\A,\tau)$ and $\cl A$ is in standard form,
we write $\rho_\psi'\in L^1(\A',\tau')$ for the density operator affiliated to $\cl A'$ satisfying
$$\ip{a'\psi}{\psi}=\om_{\psi}|_{\cl A'}(a')=\tau'(\rho'_\psi a'), \ \ \ a'\in \cl A',$$
where $\tau'=\tau\circ R$ is the canonical trace on $\cl A'$.
Recall that for $t > 0$, we write $\mu'_t(\rho'_\psi)=\mu_t(\rho'_\psi;\A',\tau')$ for the $t$-th singular value of $\rho'_\psi$
relative to $(\A',\tau')$.

\begin{lemma}\label{l_Schmidt}
Let $(\cl A,\tau)$ be a semi-finite von Neumann algebra, represented in its standard form on $L^2(\A,\tau)$,
and let $\psi\in L^2(\cl A,\tau)$. Then \[J\rho_\psi'J=\rho_{\psi^*}\qand \mu'_t(\rho_\psi')=\mu_t(\rho_{\psi^*})=\mu_t(\rho_\psi),\quad t>0.\]
\end{lemma}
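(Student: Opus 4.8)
The plan is to pin down the density operators explicitly as elements of $\tilde{\cl A}$, establish the algebraic identity $J\rho'_\psi J=\rho_{\psi^*}$ first, and then read off the two singular-value equalities from it. First I would record the concrete form of the densities. Since $\psi\in L^2(\cl A,\tau)$, H\"older's inequality places $\psi\psi^*$ and $\psi^*\psi$ in $L^1(\cl A,\tau)$, and using the inner product $(\xi,\eta)=\tau(\eta^*\xi)$ together with the traciality $\tau(xy)=\tau(yx)$ for $L^2$ factors, for $a\in\cl A$ I would compute
\[
\omega_\psi(a)=(a\psi,\psi)=\tau(\psi^*a\psi)=\tau(\psi\psi^*a),
\]
so that uniqueness of the density operator gives $\rho_\psi=\psi\psi^*$; replacing $\psi$ by $\psi^*=J\psi$ and using $\psi^{**}=\psi$ then yields $\rho_{\psi^*}=\psi^*\psi$.

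Next, for the identity $J\rho'_\psi J=\rho_{\psi^*}$, I would exploit the anti-$*$-isomorphism $R(a')=Ja'^*J$ and the trace relation $\tau'=\tau\circ R$. Writing a general $a'\in\cl A'$ as $a'=\pi'_\tau(b)=Jb^*J$ with $b=R(a')\in\cl A$, one has $a'\psi=\psi b$, whence
\[
\omega_\psi(a')=(\psi b,\psi)=\tau(\psi^*\psi b)=\tau(\rho_{\psi^*}R(a')).
\]
On the other hand, the defining property of $\rho'_\psi$, the relation $\tau'=\tau\circ R$, and the anti-multiplicativity of $R$ give $\omega_\psi(a')=\tau'(\rho'_\psi a')=\tau(R(\rho'_\psi)R(a'))$. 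Comparing these for all $a'\in\cl A'$, equivalently for all $b=R(a')\in\cl A$, and invoking uniqueness of the density operator in $L^1(\cl A,\tau)$, I would conclude $R(\rho'_\psi)=\rho_{\psi^*}$; since $\rho'_\psi\geq0$ we have $R(\rho'_\psi)=J\rho'_\psi J$, which is the claim.

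For the singular values, the first equality $\mu'_t(\rho'_\psi)=\mu_t(\rho_{\psi^*})$ follows from the identity just proved together with the observation that $p'\mapsto Jp'J$ is a bijection of $\mathcal P(\cl A')$ onto $\mathcal P(\cl A)$ satisfying $\tau(1-Jp'J)=\tau'(1-p')$, and that, since $J$ is an isometry with $J^2=1$,
\[
\nor{\rho_{\psi^*}(Jp'J)}=\nor{J\rho'_\psi p'J}=\nor{\rho'_\psi p'}.
\]
Hence the two defining infima coincide. For the second equality I would invoke the Fack--Kosaki properties $\mu_t(x)=\mu_t(|x|)=\mu_t(x^*)$ and $\mu_t(|x|^2)=\mu_t(x)^2$ \cite{fk}: since $\rho_{\psi^*}=\psi^*\psi=|\psi|^2$ and $\rho_\psi=\psi\psi^*=|\psi^*|^2$, these give $\mu_t(\rho_{\psi^*})=\mu_t(\psi)^2=\mu_t(\psi^*)^2=\mu_t(\rho_\psi)$.

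The main obstacle is not the algebra, which is light, but the fact that everything takes place with unbounded $\tau$-measurable operators. The delicate points are justifying the H\"older inclusions into $L^1(\cl A,\tau)$, the extended traciality $\tau(xy)=\tau(yx)$ for products of $L^2$ elements, the affiliation of $R(\rho'_\psi)=J\rho'_\psi J$ with $\cl A$, and the invariance of the singular numbers both under $J(\cdot)J$ and under passing from $x^*x$ to $xx^*$. Each of these is standard within the Fack--Kosaki framework, so the real work lies in handling the domains and affiliations correctly rather than in any new idea.
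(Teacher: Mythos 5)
Your argument is correct, and for the identity $J\rho'_\psi J=\rho_{\psi^*}$ and the equality $\mu'_t(\rho'_\psi)=\mu_t(\rho_{\psi^*})$ it is essentially the paper's own proof: the paper likewise derives $\tau(\rho_{\psi^*}a)=\tau(J\rho'_\psi Ja)$ for all $a\in\cl A$ by a chain of inner-product manipulations equivalent to your computation through $R$, and then uses exactly the same trace-preserving bijection $p'\mapsto Jp'J$ between the projection lattices to match the two defining infima. Where you genuinely diverge is the final equality $\mu_t(\rho_{\psi^*})=\mu_t(\rho_\psi)$. The paper takes the polar decomposition $\psi^*=v|\psi^*|$ with the partial isometry in $\cl A$, deduces $\psi^*=u^*JuJ\psi$ for a partial isometry $u\in\cl A$, and then combines the estimate $\mu_t(u^*\rho_{JuJ\psi}u)\le\mu_t(\rho_{JuJ\psi})$ with the operator inequality $\rho_{JuJ\psi}\le\rho_\psi$ (via \cite[Lemma 2.5\,(iii),(vi)]{fk}) to obtain one inequality, concluding by symmetry. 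You instead identify the densities explicitly as $\rho_\psi=\psi\psi^*=|\psi^*|^2$ and $\rho_{\psi^*}=\psi^*\psi=|\psi|^2$ and invoke the Fack--Kosaki identities $\mu_t(x)=\mu_t(x^*)=\mu_t(|x|)$ and $\mu_t(|x|^2)=\mu_t(x)^2$. Your route is shorter and more transparent; its only cost is that the formulas $\rho_\psi=\psi\psi^*$ and $\rho_{\psi^*}=\psi^*\psi$ must be justified as identities in $L^1(\cl A,\tau)$ (H\"older plus the tracial property $\tau(xy)=\tau(yx)$ for $x,y\in L^2(\cl A,\tau)$), which you correctly flag and which is indeed standard in the $\tau$-measurable framework of \cite{fk}. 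The paper's polar-decomposition argument never names the densities explicitly, at the price of a slightly fiddlier two-sided estimate; both approaches are sound.
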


\begin{proof}
  First we show that $\rho_{\psi^*}=J\rho'_\psi J$ and $\mu_t'(\rho_\psi')=\mu_t(\rho_{\psi^*})$. Recall that $\psi^*=J\psi$ and that $J$ is a conjugate linear isometry satisfying~(\ref{eq_jaj}) with $J^2=1$.
For $a\in \A$, we therefore have
\begin{align*} \tau(\rho_{\psi^*}a)&=\ip{aJ\psi}{J\psi}=\ip{\psi}{JaJ\psi}=\ip{Ja^*J\psi}{\psi}=\tau'(\rho'_\psi Ja^*J)\\&=\tau\left(J(JaJ\rho_\psi')J\right)=\tau(J\rho_\psi'Ja).
\end{align*}
Hence,
\begin{equation}\label{eq_psiJ}
\rho_{\psi^*} = J\rho'_\psi J.
\end{equation}
Note that $R\colon \A'\to \A$ induces a trace-preserving bijection $\P(\A')\to \P(\A)$, $p'\mapsto Jp'J$. For $t > 0$, using \eqref{eq_psiJ} we obtain
\begin{align*}
  \mu'_t(\rho_\psi')&=\inf\{\nor{\rho_\psi' p'}\colon {p'\in \P(\A'),\,\tau'(1-p')\le t}\}\\
  &=\inf\{\nor{\rho_\psi'JpJ}\colon {p\in \P(\A),\,\tau(1-p)\le t}\}\\
  &=\inf\{\nor{J\rho_\psi'Jp}\colon {p\in \P(\A),\,\tau(1-p)\le t}\}\\
  &=\inf\{\nor{\rho_{\psi^*}p}\colon {p\in \P(\A),\,\tau(1-p)\le t}\} = \mu_t(\rho_{\psi^*}).
\end{align*}

Now, by \cite[Exercises IX.1.2--3]{t2}, there exists a partial isometry $u\in\cl A$
such that $|\psi^*|=\psi u^*$, and $\psi^*=u^*|\psi^*|$.
It follows that
$$\psi^* = u^*\psi u^* = u^* |\psi^*| = u^* J |\psi^*| = u^* J u \psi^* =  u^*JuJ\psi.$$
Since $u\in\cl A$, we have $\rho_{\psi*}=\rho_{u^*JuJ\psi}=u^*\cdot\rho_{JuJ\psi}\cdot u$.
By \cite[Lemma 2.5\,(vi)]{fk},
\begin{equation}\label{eq_mut}
\mu_t(\rho_{\psi*})\leq\nor{u^*}\nor{u}\mu_t(\rho_{JuJ\psi})\leq\mu_t(\rho_{JuJ\psi}),
\end{equation}
for $t > 0$. But $JuJ$ is a contraction in $\cl A'$, so $\om_{JuJ\psi}|_{\cl A}\leq \om_{\psi}|_{\cl A}$, that is, $\rho_{JuJ\psi}\leq \rho_{\psi}$. By \cite[Lemma 2.5\,(iii)]{fk}, $\mu_t(\rho_{JuJ\psi})\leq\mu_t(\rho_\psi)$, $t>0$.
Thus, by \eqref{eq_mut}, $\mu_t(\rho_{\psi^*})\leq\mu_t(\rho_{\psi})$, for $t>0$. By symmetry, we obtain equality.
\end{proof}

\begin{proposition}\label{p_lopopescu}
Let $(\cl A,\tau)$ be a semi-finite factor in its standard form on $L^2(\A,\tau)$, let $\cl B = \cl A'$ and let $\epsilon > 0$.
For any $(\psi,b)\in H\times \B$, there exists a unitary
$u\in \cl B$ and partial isometries $v,w\in \A$ so that $\psi=v^*v\psi$ and, if $z=JbJv$, then
\[ \nor{b\psi}=\nor{z\psi}\qand \nor{b\psi-uwz\psi}<\epsilon.\]
Moreover, the partial isometry $v$ can be chosen independently of $b$.
\end{proposition}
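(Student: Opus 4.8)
The plan is to convert the right action of $b$ on $\psi$ into a left multiplication by an element of $\cl A$, and then to repair the resulting vector back to $b\psi$ using a partial isometry in $\cl A$ together with a unitary in $\cl B$; the standard form supplies the dictionary between the two commutants through $J$.

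First I would record the elementary identities. Writing $c:=JbJ\in\cl A$, the relation $\pi'_\tau(a)\xi=\xi a$ together with $\pi'_\tau(c^*)=JcJ=b$ shows that $b\psi=\psi c^*$, i.e.\ $b$ acts on $\psi$ by right multiplication. Taking the polar decomposition $\psi=w_0|\psi|$ of $\psi$ as a $\tau$-measurable operator, with $w_0\in\cl A$ a partial isometry and $w_0^*w_0$ the support projection of $|\psi|$, I set $v:=w_0^*$; this depends only on $\psi$ and satisfies $v^*v=w_0w_0^*=\ell(\psi)$, so that $v^*v\psi=\psi$. Then $z:=JbJv=cw_0^*\in\cl A$ and $z\psi=cw_0^*w_0|\psi|=c|\psi|$. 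The identity $\nor{b\psi}=\nor{z\psi}$ is immediate from the trace property, since $\nor{b\psi}_2^2=\tau(c|\psi|^2c^*)=\tau(c^*c|\psi|^2)=\nor{z\psi}_2^2$.

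The core of the argument is a comparison of $X:=b\psi=\psi c^*$ with $Y:=z\psi=c|\psi|$. A direct computation gives $X^*X=c|\psi|^2c^*=YY^*$, whence $|X|=|Y^*|$ and, on passing to support projections, the left support of $Y$ equals the right support of $X$. Writing the polar decompositions $X=u_X|X|$ and $Y=|Y^*|u_Y$ with $u_X,u_Y\in\cl A$ (the same partial isometry appearing in the reversed polar form of $Y$), these two facts combine to give the \emph{exact} identity $X=u_XYu_Y^*$ in $\tilde{\cl A}$. Reading the right multiplication by $u_Y^*$ as the action of $s:=\pi'_\tau(u_Y^*)=Ju_YJ\in\cl B$ and setting $w:=u_X\in\cl A$, we obtain $b\psi=w\,s\,Y$. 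As $\cl A$ and $\cl B=\cl A'$ commute, it now suffices to approximate $s$ by a unitary $u\in\cl B$ for which $\nor{(s-u)Y}$ is small, because then $\nor{b\psi-uwz\psi}=\nor{w(s-u)Y}\le\nor{(s-u)Y}$.

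The step I expect to be the main obstacle is precisely this last one: $s$ is only a \emph{partial} isometry in $\cl B$, and in an infinite factor a partial isometry need not extend to a unitary, since the complements of two equivalent projections may fail to be equivalent. To circumvent this I would exploit that $Y\in L^2(\cl A,\tau)$. For small $\delta>0$ the spectral projection $f:=\chi_{[\delta,\infty)}(|Y|)\in\cl A$ has finite trace (as $\tau(f)\le\delta^{-2}\tau(|Y|^2)<\infty$) and $\nor{Y(1-f)}_2\to 0$ as $\delta\downarrow 0$. The cut-down $sJfJ=Ju_YfJ$ is a partial isometry whose initial and final projections, $JfJ$ and $Ju_Yfu_Y^*J$, have equal finite trace; since $\cl A$, and hence $\cl B$, is a factor, their complements have equal trace (namely $\tau'(1)-\tau(f)$) and are therefore equivalent, so $sJfJ$ extends to a unitary $u\in\cl B$. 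Because $u$ agrees with $s$ on the range of $JfJ$ and $JfJ\,Y=Yf$, we get $(s-u)Yf=0$, so $\nor{(s-u)Y}=\nor{(s-u)Y(1-f)}\le 2\nor{Y(1-f)}_2<\epsilon$ once $\delta$ is small enough. This yields all the asserted properties, with $v$ chosen independently of $b$.
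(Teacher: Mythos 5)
Your argument is correct, and it takes a genuinely different route from the paper's. You extract the \emph{exact} factorisation $b\psi=u_X(z\psi)u_Y^*$ from the polar decompositions of $X=\psi c^*$ and $Y=c|\psi|$ in $\tilde{\cl A}$ (via $X^*X=YY^*$, so $|X|=|Y^*|$ and $u_XYu_Y^*=u_X|Y^*|=X$), and then only need to approximate the single partial isometry $s=Ju_YJ\in\cl B$ by a unitary on the vector $Y$, which you do with the finite-trace spectral cut-off $f=\chi_{[\delta,\infty)}(|Y|)$ and the comparison theory of projections in a factor. The paper instead compares the restrictions of $\om_{b\psi}$ and $\om_{z\psi}$ to $\cl B$: it shows the corresponding density operators have the same singular value function (via Lemma~\ref{l_Schmidt}), invokes Hiai's approximate unitary equivalence theorem \cite[Theorem 3.4\,(1)]{h} to produce the unitary $u\in\cl B$ at the level of densities, and then lifts this to vectors using the continuity and uniqueness of Stinespring representations \cite{ksw}, obtaining $w$ as a product of two intertwining partial isometries (hence a priori only a contraction). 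Your approach is more self-contained---it avoids both the majorisation machinery and the Stinespring continuity theorem---and it delivers $w$ as an honest partial isometry, matching the wording of the statement; both approaches use factoriality essentially (yours through equivalence of projections, the paper's through Hiai's theorem). One small point to tighten: when $\tau'(1)=\infty$, the complements $1-JfJ$ and $1-Ju_Yfu_Y^*J$ are infinite projections, and ``equal trace'' alone does not yield equivalence in an arbitrary factor; you should invoke the fact that $\cl B$ is $\sigma$-finite (automatic here, since $L^2(\cl A,\tau)$ is separable), so that any two infinite projections in the factor $\cl B$ are equivalent. With that word added, the proof is complete.
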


\begin{proof} Let $\psi^*=v|\psi^*|$ be the polar decomposition of $\psi^*$
\cite[Exercises IX.1.2--3]{t2}; thus,
$v\in \cl A$ is a partial isometry with $\psi=v^*|\psi|$, and
the projections $v^*v$ and $vv^*$ have ranges
$\overline{\cl B\psi}$ and $\overline{\cl B|\psi|}$, respectively, so in particular, $\psi=v^*v\psi$.
It follows that
\begin{equation}\label{eq_vpsi}
v\psi=vv^*|\psi|=|\psi|=J|\psi|=Jv\psi.
\end{equation}
Note that $z \in \A$. We have $v^*vb\psi = b v^*v\psi = b\psi$, and hence, using \eqref{eq_vpsi},
\begin{equation*}
\nor{b\psi}=\nor{vb\psi}=\nor{bv\psi}=\nor{bJv\psi}=\nor{JbJv\psi}=\nor{z\psi},
\end{equation*}
as desired.

Note that 
\begin{equation}\label{eq_taudash}
\tau'(\rho') = \tau(J\rho'\mbox{}^{*} J), \ \ \ \rho'\in L^1(\cl B, \tau');
\end{equation}
indeed, the formula holds by the definition of $\tau'$ in the case $\rho' \in \cl B$, and the general case follow by
approximating $\rho'$ by a sequence in $\cl B$ in the norm $\|\cdot\|_1$.
Let $\alpha=z\psi$ and $\beta=b\psi$.  For $c\in\cl B$, by \eqref{eq_taudash}, \eqref{eq_vpsi} and \eqref{eq_jaj},
we have
\begin{align*}
\tau(\rho_\alpha R(c)) &= (R(c)z\psi,z\psi)
  =\ip{Jc^*JJbJv\psi}{JbJv\psi}=\ip{Jc^*bv\psi}{Jbv\psi}\\
  &=\ip{bv\psi}{c^*bv\psi} = \ip{cv^*vb\psi}{b\psi} =\ip{c \beta}{\beta}
    =\tau'(\rho'_\beta c)\\
    &=\tau(J\rho_\beta'J R(c)).
\end{align*}
Thus, by Lemma~\ref{l_Schmidt}, $\rho_\alpha=J\rho_\beta'J=\rho_{\beta^*}$
and
$$\mu'_t(\rho'_{\alpha})=\mu_t(\rho_{\alpha})=\mu_t(\rho_{\beta^*})=\mu_t'(\rho'_{\beta}), \ \ \ t>0.$$
Since $\A$ is a factor,
by \cite[Theorem 3.4\,(1)]{h} for every $\ep>0$ there exists a unitary $u\in\cl B$ such that
$$\nor{\rho'_{\beta}-\rho'_{u\alpha}}_1 = \nor{\rho'_{\beta}-u \rho'_{\alpha} u^*}_1 < \ep^2.$$
By the continuity of Stinespring's representation \cite[Theorem 1]{ksw}, there exist a Hilbert space $K$,
a *-homomorphism $\pi:\cl B\rightarrow \cl B(K)$ and vectors
$\xi,\eta\in K$ such that $\om_{\beta}|_{\cl B}=\om_\xi\circ\pi$ and $\om_{u\alpha}|_{\cl B}=\om_\eta\circ\pi$, and
$$\nor{\xi-\eta}\leq\nor{\omega_\beta|_{\B}-\omega_{u\alpha}|_{\B}}^{1/2}=\nor{\rho'_{\beta}-\rho'_{u\alpha}}_1^{1/2}<\ep.$$
By the uniqueness of Stinespring representations,
there exist partial isometries $w_1 : L^2(\cl A,\tau)\rightarrow K$ and
$w_2 : K\rightarrow L^2(\cl A,\tau)$ such that $w_1c=\pi(c)w_1$ and $w_2\pi(c)=cw_2$ for all $c\in \cl B$,
$w_1u\alpha=\eta$ and $w_2\xi=\beta$. Then $w:=w_2w_1$ is a contraction in $\cl A$, and
\begin{equation*}
\nor{b\psi-uwz\psi}=\nor{\beta-wu\alpha} = \nor{w_2\xi-w_2\eta} < \ep.\qedhere
\end{equation*}
\end{proof}

The following estimate is straightforward and we will make use of it multiple times.

\begin{lemma}\label{l_piso}
Let $\psi\in H$, $\nor{\psi}\leq 1$, and let $v\in\BH$ be a contraction. If $\nor{\psi}-\nor{v\psi}<\ep$, then $\nor{(1-v^*v)^{1/2}\psi}<\sqrt{2\ep}$.
\end{lemma}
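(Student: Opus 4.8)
The plan is to reduce everything to a single elementary identity relating the quantity $\nor{(1-v^*v)^{1/2}\psi}$ to the two norms appearing in the hypothesis. First I would note that, since $v$ is a contraction, we have $v^*v\leq 1$, so $1-v^*v$ is a positive operator and its positive square root $(1-v^*v)^{1/2}$ is well defined; this is the only place where the contractivity of $v$ is needed to make the statement meaningful.

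Next I would compute the square of the left-hand side directly via the inner product:
\[
\nor{(1-v^*v)^{1/2}\psi}^2 = \ip{(1-v^*v)\psi}{\psi} = \nor{\psi}^2 - \ip{v^*v\psi}{\psi} = \nor{\psi}^2 - \nor{v\psi}^2.
\]
The key move is then to factor this as a difference of squares,
\[
\nor{\psi}^2 - \nor{v\psi}^2 = \bigl(\nor{\psi}-\nor{v\psi}\bigr)\bigl(\nor{\psi}+\nor{v\psi}\bigr),
\]
which separates the small factor controlled by the hypothesis from a factor that is uniformly bounded.

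Finally I would bound each factor: by hypothesis the first factor satisfies $\nor{\psi}-\nor{v\psi}<\ep$, while for the second factor I use that $v$ is a contraction and $\nor{\psi}\leq 1$ to get $\nor{v\psi}\leq\nor{\psi}\leq 1$, whence $\nor{\psi}+\nor{v\psi}\leq 2\nor{\psi}\leq 2$. Combining these gives $\nor{(1-v^*v)^{1/2}\psi}^2 < 2\ep$, and taking square roots yields the claim. There is no genuine obstacle here — the argument is entirely routine — but the one point to keep honest is the bound $\nor{\psi}+\nor{v\psi}\leq 2$, which is precisely where both standing assumptions ($\nor{\psi}\leq 1$ and $v$ a contraction) are simultaneously used to produce the factor of $2$ inside the radical.
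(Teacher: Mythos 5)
Your argument is correct and is essentially identical to the paper's proof: both compute $\nor{(1-v^*v)^{1/2}\psi}^2=\nor{\psi}^2-\nor{v\psi}^2$, factor it as a difference of squares, and bound the sum factor by $2$ using the standing assumptions. Nothing to add.
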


\begin{proof} The assumption implies
\begin{align*}\nor{(1-v^*v)^{1/2}\psi}^2&=\ip{\strut(1-v^*v)\psi}{\psi}=\nor{\psi}^2-\nor{v\psi}^2\\
&=(\nor{\psi}+\nor{v\psi})(\nor{\psi}-\nor{v\psi})
\leq 2(\nor{\psi}-\nor{v\psi})
<2\ep.\qedhere\end{align*}
\end{proof}

\begin{lemma}\label{l_dini}
Let $\cl A$ be a von Neumann algebra equipped with a normal faithful semi-finite trace $\tau$, and let
$(\rho_k)_{k\in \bb{N}}$ be an increasing sequence of self-adjoint elements of $L^1(\cl A,\tau)$.
If $\rho_k\to_{k\to \infty}\rho$ in the weak topology, for some $\rho\in L^1(\cl A,\tau)$,
then $\rho_k\to_{k\to \infty}\rho$ in norm.
\end{lemma}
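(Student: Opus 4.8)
The plan is to use the monotonicity to reduce the norm convergence to the convergence of a single weakly continuous scalar quantity, namely the trace. Recall that the weak topology on $L^1(\cl A,\tau)=\cl A_*$ is $\sigma(L^1,\cl A)$, so the hypothesis says precisely that $\tau(\rho_k a)\to\tau(\rho a)$ as $k\to\infty$ for every $a\in\cl A$. First I would establish that $\rho_k\leq\rho$ for all $k$. For fixed $a\in\cl A^+$ the quantities $\tau(\rho_k a)$ are nondecreasing in $k$ (since $\tau((\rho_{k+1}-\rho_k)a)\geq 0$), and by weak convergence they tend to $\tau(\rho a)$; hence $\tau((\rho-\rho_k)a)\geq 0$ for every $a\in\cl A^+$. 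Since the order on self-adjoint elements of $L^1(\cl A,\tau)$ is determined by pairing against $\cl A^+$ (test against the support projection of the negative part of $\rho-\rho_k$, which lies in $\cl A$, and invoke faithfulness of $\tau$), this forces $\rho-\rho_k\geq 0$.

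The decisive simplification is that, $\rho-\rho_k$ being positive, its $\nor{\cdot}_1$-norm is just its trace:
\[
\nor{\rho-\rho_k}_1=\tau(|\rho-\rho_k|)=\tau(\rho-\rho_k)=\tau(\rho)-\tau(\rho_k).
\]
Thus norm convergence is equivalent to the numerical statement $\tau(\rho_k)\to\tau(\rho)$. But $\tau$, regarded as a functional on $L^1(\cl A,\tau)$, is exactly the pairing with the unit $1\in\cl A=(L^1)^*$, so it is weakly continuous; applying the hypothesis with $a=1$ gives $\tau(\rho_k)\to\tau(\rho)$, and the proof is complete.

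The only genuinely subtle point is the first step---deducing the order relation $\rho_k\leq\rho$ from weak convergence of a monotone sequence---which is where the Dini-type character of the statement resides. Once this is in hand, the positivity of $\rho-\rho_k$ collapses the $L^1$-norm to the weakly continuous trace functional, and the conclusion is immediate. I should also note that the argument never uses more than convergence tested against $\cl A$, so the hypothesis of weak convergence could not be weakened further in any essential way for this approach.
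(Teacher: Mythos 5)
Your proof is correct and follows essentially the same route as the paper's: positivity of $\rho-\rho_k$ collapses the $L^1$-norm to the trace, which converges by testing the weak convergence against $1\in\cl A$. The only difference is that you supply the justification for $\rho_k\leq\rho$ (monotonicity of $\tau(\rho_k a)$ for $a\in\cl A^+$ plus testing against the support of the negative part), a step the paper simply asserts.
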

\begin{proof}
We have that $\rho_k\leq \rho$; thus, $\rho - \rho_k \geq 0$ for every $k$. Thus,
\[\|\rho - \rho_k\| = (\rho - \rho_k)(1) \to_{k\to\infty} 0.\qedhere\]
\end{proof}

\begin{lemma}\label{l_ineq}
Let $H$ be a separable Hilbert space and $\Phi:\BH\rightarrow\BH$ be positive. Then for any self-adjoint $T\in\BH$ and any self-adjoint $\rho\in\cl T(H)$,
$$|\la\Phi(T),\rho\ra|\leq\norm{T}\la\Phi(1),|\rho|\ra.$$
\end{lemma}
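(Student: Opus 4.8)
The plan is to first unwind the duality notation and reduce the inequality to a statement about traces. By the canonical identification $\cl T(H)\equiv\BH_*$ recorded in the preliminaries, the pairing is $\la\Phi(T),\rho\ra=\tr(\Phi(T)\rho)$ and likewise $\la\Phi(1),|\rho|\ra=\tr(\Phi(1)|\rho|)$, so the goal becomes
\[
|\tr(\Phi(T)\rho)|\leq\nor{T}\tr(\Phi(1)|\rho|).
\]
All products appearing here are of a bounded operator with a trace class operator, hence trace class, so every trace is well defined.

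The core idea is to exploit that a positive map is automatically order-preserving and self-adjointness-preserving on self-adjoint elements (if $S\leq T$ then $\Phi(T-S)\geq0$, giving $\Phi(S)\leq\Phi(T)$). Since $T$ is self-adjoint we have the operator bounds $-\nor{T}1\leq T\leq\nor{T}1$, and applying $\Phi$ yields
\[
-\nor{T}\Phi(1)\leq\Phi(T)\leq\nor{T}\Phi(1),
\]
equivalently $\nor{T}\Phi(1)\pm\Phi(T)\geq0$ in $\BH$.

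Next I would decompose the self-adjoint trace class operator as $\rho=\rho_+-\rho_-$ with $\rho_\pm\geq0$ and $|\rho|=\rho_++\rho_-$. The one fact to invoke is that for a bounded $A\geq0$ and a positive trace class $P$ one has $\tr(AP)=\tr(P^{1/2}AP^{1/2})\geq0$. Pairing the two inequalities $\nor{T}\Phi(1)\pm\Phi(T)\geq0$ against $\rho_+\geq0$ gives $|\tr(\Phi(T)\rho_+)|\leq\nor{T}\tr(\Phi(1)\rho_+)$, and the identical argument with $\rho_-$ gives $|\tr(\Phi(T)\rho_-)|\leq\nor{T}\tr(\Phi(1)\rho_-)$. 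Finally, combining these via the triangle inequality,
\[
|\tr(\Phi(T)\rho)|\leq|\tr(\Phi(T)\rho_+)|+|\tr(\Phi(T)\rho_-)|\leq\nor{T}\tr\bigl(\Phi(1)(\rho_++\rho_-)\bigr)=\nor{T}\tr(\Phi(1)|\rho|),
\]
which is the claim.

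I do not expect a genuine obstacle here: the argument is elementary once the pairing is identified with a trace. The only points requiring a little care are the bookkeeping of the duality brackets (to make sure the trace lands in the correct order) and the justification that $\tr(AP)\geq0$ for bounded $A\geq0$ and trace class $P\geq0$, which underlies every step where an operator inequality is converted into a scalar inequality.
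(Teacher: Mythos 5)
Your proof is correct, and it is essentially the same argument as the paper's: both hinge on the bound $-\nor{T}1\leq T\leq\nor{T}1$, the positivity of $\Phi$, and a splitting of $\rho$ into positive and negative parts. The only cosmetic difference is that the paper diagonalises $\rho$ in an eigenbasis and applies positivity through the vector states $\om_{e_n}\circ\Phi$, whereas you apply $\Phi$ directly to the operator inequality and use the abstract decomposition $\rho=\rho_+-\rho_-$; both routes are equally valid.
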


\begin{proof} Let $(e_n)_{n=1}^\infty$ be an orthonormal basis of eigenvectors of $\rho$ with corresponding eigenvalues $(\lambda_n)_{n=1}^\infty\subseteq\R$. Since $-\norm{T}1\leq T\leq\norm{T}1$, by positivity of $\om_{e_n}\circ\Phi$ we have
$$-\norm{T}(\Phi(1)e_n,e_n)\leq(\Phi(T)e_n,e_n)\leq\norm{T}(\Phi(1)e_n,e_n),$$
so that $|(\Phi(T)e_n,e_n)|\leq\norm{T}(\Phi(1)e_n,e_n)$ for all $n\in\N$. Hence
\begin{align*}|\la\Phi(T),\rho\ra|&=\bigg|\sum_{n=1}^\infty\lambda_n(\Phi(T)e_n,e_n)\bigg|\leq\sum_{n=1}^\infty|\lambda_n||(\Phi(T)e_n,e_n)|\\
&\leq\sum_{n=1}^\infty|\lambda_n|\norm{T}(\Phi(1)e_n,e_n)=\norm{T}\la\Phi(1),|\rho|\ra.\qedhere\end{align*}
\end{proof}

We now begin the proof that approximate convertibility can be achieved by one-way $\locc(\cl A)$ maps (see Corollary~\ref{c_oneway}). We first establish the case when $\cl A$ is standardly represented using induction on the number of rounds of the $\locc(\cl A)$ protocol. The base case of the induction is isolated into the next result.

\newcommand{\ser}[1]{\sum_{#1=1}^\infty}
\begin{proposition}\label{prop_p1} 
Let $(\cl A,\tau)$ be a semi-finite factor in its standard form on $H=L^2(\A,\tau)$.
Suppose that $\cl I=(\Theta_k)_k$ is a one-way right instrument relative to~$\cl A$ and $\cl J=(\Lambda_k)_k$ is an instrument which is linked to $\cl I$. 
For any $\psi\in H$ 
and any $\ep > 0$, there exists an instrument $(\Gamma_{k})_{k}$ which is a coarse-graining of some one-way right instrument relative to~$\cl A$, such that
\begin{equation}
  \sum_{k=1}^\infty\norm{\Lambda_{k*}(\om_\psi)-\Gamma_{k*}(\om_\psi)}<\ep.\label{eq:P(1)}
\end{equation}
\end{proposition}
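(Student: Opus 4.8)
The plan is to trade the (possibly left-handed) second round of $\cl J$ for a right-handed one by means of Proposition~\ref{p_lopopescu}, thereby collapsing the two rounds into a single one-way right instrument up to arbitrarily small error. First I would remove the coarse-graining from consideration: by definition $\cl J$ is a coarse-graining of $(\Theta_k\circ\Theta_{ki})_{k,i}$ along a partition $\bN=\bigcup_j S_j$, and since $\nor{\sum_{(k,i)\in S_j}\xi_{ki}}\le\sum_{(k,i)\in S_j}\nor{\xi_{ki}}$, the left-hand side of \eqref{eq:P(1)} only increases when we refine. Hence it suffices to build a one-way right instrument $(\Gamma'_{ki})_{k,i}$ with $\sum_{k,i}\nor{(\Theta_k\circ\Theta_{ki})_*(\om_\psi)-\Gamma'_{ki*}(\om_\psi)}<\ep$, whose coarse-graining along $\bigcup_j S_j$ then gives the required $(\Gamma_k)_k$. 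Using Proposition~\ref{p_comp} I fix Kraus data for the first round, $\Psi_k=\Ad(a_k^*)$ with $a_k\in\A$ and $\Phi_k=\sum_{i'}\Ad(c_{ki'}^*)$ with $c_{ki'}\in\B$, so that $\sum_k a_k^*a_k=1$ and $\sum_{i'}c_{ki'}^*c_{ki'}=1$.

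I would then split according to the handedness of each second round $(\Theta_{ki})_i$. If it is one-way right, then by \eqref{eq_comm} and the closure of $\ucp^\sigma_\A$ and $\cp^\sigma_\B$ under composition, $\Theta_k\circ\Theta_{ki}$ is itself one-way right local, so I set $\Gamma'_{ki}=\Theta_k\circ\Theta_{ki}$ with no error. The heart of the matter is the one-way left case. Writing the second round with Bob's measurement operators $b_{ki}\in\B$ (so $\sum_i b_{ki}^*b_{ki}=1$) and Alice's channel with Kraus operators $e_{kij}\in\A$ (so $\sum_j e_{kij}^*e_{kij}=1$), a direct computation of $(\Theta_k\circ\Theta_{ki})_*(\om_\psi)$, after reordering $\A$- and $\B$-factors via \eqref{eq_comm}, shows that its pure components are $e_{kij}\,a_k b_{ki}c_{ki'}\,\psi=e_{kij}\,B_{kii'}\,(a_k\psi)$, where $B_{kii'}:=b_{ki}c_{ki'}\in\B$ and I have used $a_k(b_{ki}c_{ki'})=(b_{ki}c_{ki'})a_k$. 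The decisive point is that, for fixed $k$, every $B_{kii'}$ acts on the \emph{same} vector $a_k\psi$; thus I can apply Proposition~\ref{p_lopopescu} to the pairs $(a_k\psi,B_{kii'})$ with a single partial isometry $v_k\in\A$ (independent of $B_{kii'}$), producing, for each $(i,i')$, a unitary $u_{kii'}\in\B$, a partial isometry $w_{kii'}\in\A$ and $z_{kii'}=JB_{kii'}Jv_k\in\A$ with $a_k\psi=v_k^*v_k a_k\psi$ and $\nor{B_{kii'}a_k\psi-u_{kii'}w_{kii'}z_{kii'}a_k\psi}<\delta_{kii'}$, the $\delta_{kii'}$ chosen summably small. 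Commuting $u_{kii'}$ to the left past the $\A$-operators, each component is approximated by $u_{kii'}\,e_{kij}w_{kii'}z_{kii'}a_k\,\psi$, which has exactly the form of a one-way right local map: Alice measures $z_{kii'}a_k$ and post-processes by $w_{kii'}$ and $e_{kij}$ in $\A$, while Bob applies the conditional channel $\Ad(u_{kii'}^*)\in\ucp^\sigma_\A$.

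The main obstacle is to assemble these replaced pieces into a genuine (that is, unital) one-way right instrument while keeping the state errors summable, and I expect this normalisation bookkeeping to be the crux. For a fixed left branch $k$, the measurement operators $z_{kii'}a_k$ telescope, using $\sum_{i,i'}z_{kii'}^*z_{kii'}=v_k^*J(\sum_{i,i'}B_{kii'}^*B_{kii'})Jv_k=v_k^*v_k$, to $a_k^*v_k^*v_k a_k$; I would therefore pad with the single extra Kraus operator $(1-v_k^*v_k)^{1/2}a_k\in\A$, which restores the branch normalisation to $a_k^*a_k=\Theta_k(1)$ and, crucially, annihilates $\psi$ since $v_k^*v_k a_k\psi=a_k\psi$, so it is invisible to the estimate. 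The contractions $w_{kii'}$ are incorporated as Alice post-processing, completed to a channel by the complementary Kraus operator $(1-w_{kii'}^*w_{kii'})^{1/2}$; this preserves unitality automatically, and since Proposition~\ref{p_lopopescu} gives $\nor{B_{kii'}a_k\psi}=\nor{z_{kii'}a_k\psi}$ while $u_{kii'}$ is unitary, $w_{kii'}$ nearly preserves the norm of $z_{kii'}a_k\psi$, so Lemma~\ref{l_piso} forces $(1-w_{kii'}^*w_{kii'})^{1/2}z_{kii'}a_k\psi$ to be small and the completion branch contributes negligibly. Summing the per-branch normalisations $a_k^*a_k$ over all (left and right) branches yields $\sum_k a_k^*a_k=1$, so $(\Gamma'_{ki})_{k,i}$ is a bona fide one-way right instrument. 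Finally I would pass from the vector estimates to trace-norm estimates on the states via $\nor{\xi\xi^*-\eta\eta^*}\le\nor{\xi-\eta}(\nor{\xi}+\nor{\eta})$, sum the geometric series of the $\delta_{kii'}$'s, and coarse-grain along $\bigcup_j S_j$ to conclude.
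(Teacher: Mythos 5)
Your proposal is correct and follows essentially the same route as the paper's proof: reduce to the refined instrument $(\Theta_k\circ\Theta_{ki})_{k,i}$ and re-apply the coarse-graining at the end, leave the right-handed branches untouched, flip the left-handed branches via Proposition~\ref{p_lopopescu} (using that the partial isometries $v_k$ can be chosen independently of the $\B$-operators, so that the new measurement operators telescope), restore unitality with defect operators that annihilate $\psi$, and control the $w$-completion branch via Lemma~\ref{l_piso}; your per-branch padding by $(1-v_k^*v_k)^{1/2}a_k$ is a harmless variant of the paper's globally distributed $p_kp_ip_jp_l\Ad((1-A)^{1/2})$. The one step that needs an extra line is the sum over Alice's Kraus index $j$ in the final estimate: the componentwise bounds $\nor{e_{kij}(\zeta-\zeta')}\left(\nor{e_{kij}\zeta}+\nor{e_{kij}\zeta'}\right)$, with $\zeta=B_{kii'}a_k\psi$ and $\zeta'=u_{kii'}w_{kii'}z_{kii'}a_k\psi$, do not sum over $j$ to a multiple of $\delta_{kii'}$ termwise, but Cauchy--Schwarz together with $\sum_j e_{kij}^*e_{kij}=1$ gives the total $\le\nor{\zeta-\zeta'}\left(\nor{\zeta}+\nor{\zeta'}\right)\le 2\delta_{kii'}$ --- equivalently, one keeps Alice's channel $\Psi^L_{ki}$ undecomposed and uses that its predual is a trace-norm contraction, which is what the paper does.
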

\begin{proof}
  We may assume that $\nor{\psi}\le 1$. Each $\Theta_k$ is one-way right local, so  by Proposition~\ref{p_comp}\,(ii) we may write $\Theta_k=\sum_{j,l=1}^\infty \Ad(a_{kj}^*)\circ\Ad(c_{kl}^*)$, where $a_{kj}\in\cl A$ satisfy $\sum_{k=1}^\infty\sum_{j=1}^{\infty}a_{kj}^*a_{kj}=1$ and $c_{kl}\in\cl B$ satisfy $\sum_{l=1}^\infty c_{kl}^*c_{kl}=1$ for each $k\in\N$. We define $\psi_{kj}:=a_{kj}\psi\in H$, for $k,j\in \bb N$.

Since $\cl J$ is linked to $\cl I$, the instrument~$\cl J$ is a coarse-graining of an instrument of the form $(\Theta_k\circ \Theta_{ki})_{k,i}$, for a collection of one-way instruments $\cl J_k=(\Theta_{ki})_i$ indexed by $k\in\N$. Write $L=\{k\in\N\mid \cl J_k \ \textnormal{is one-way left}\}$ and $R=\{k\in\N\mid \cl J_k \ \textnormal{is one-way right}\}$.

Suppose first that $k\in L$. By Proposition~\ref{p_comp}\,(ii) and Remark~\ref{r_mirror}\,(ii), we may assume that $\Theta_{ki}=\Psi^L_{ki}\circ\Ad(b_{ki}^*)$, where $\Psi^L_{ki}\in\ucp^\sigma_{\cl B}(\cl B(H))$, $b_{ki}\in\cl B$ and $\sum_{i=1}^\infty b_{ki}^*b_{ki}=1$.
Hence, in the point weak*-topology,  we have
\begin{equation}\label{e_rl}
\Theta_k\circ \Theta_{ki}=\ser j \ser l \Ad((b_{ki}c_{kl}a_{kj})^*)\circ \Psi^L_{ki}.
\end{equation}
Since~$\A$ is a factor, we can apply Proposition~\ref{p_lopopescu} to the pairs $(\psi_{kj},b_{ki}c_{kl})\in H\times \B$.
We obtain  unitaries $u_{kijl}\in \B$ and partial isometries $v_{kj},w_{kijl}\in \A$ so that $\psi_{kj}=v_{kj}^*v_{kj}\psi_{kj}$ and $z_{kijl}=Jb_{ki}c_{kl}Jv_{kj}\in \A$ satisfy $\nor{b_{ki}c_{kl}\psi_{kj}}=\nor{z_{kijl}\psi_{kj}}$ and
\begin{equation}\label{e_bound1}
\|b_{ki}c_{kl}\psi_{kj}-u_{kijl}w_{kijl}z_{kijl}\psi_{kj}\|<\frac{\epsilon}{2^{i+j+k+l+2}}
\end{equation}
for $i,j,k,l\in \bN$.
Let $a_{kijl}=w_{kijl}z_{kijl}\in \A$, and
observe that
$$\nor{z_{kijl}\psi_{kj}}-\norm{a_{kijl}\psi_{kj}}=\nor{b_{ki}c_{kl}\psi_{kj}}-\norm{u_{kijl}w_{kijl}z_{kijl}\psi_{kj}}<\frac{\ep}{2^{i + j + k+l+2}}.$$
Let $\widetilde{a_{kijl}} := (1-w_{kijl}^*w_{kijl})z_{kijl}\in \A$. By Lemma~\ref{l_piso}, the preceding inequality implies
\begin{equation}\label{eq_kij1}
\nor{\widetilde{a_{kijl}}\psi_{kj}}=\nor{(1-w_{kijl}^*w_{kijl}) z_{kijl}\psi_{kj}}<\sqrt{\frac{\ep}{2^{i + j+k+l+1}}}.
\end{equation}

Now suppose that $k\in R$. The map $\Theta_{\cl J_k}=\sum_{i=1}^\infty \Theta_{ki}$ is then a one-way right LOCC map relative to~$\cl A$, with $\Theta_{ki}=\Psi^R_{ki}\circ\Phi_{ki}$, where $\Psi^R_{ki}\in\cp^\sigma_{\cl B}(\cl B(H))$ and $\Phi_{ki}\in\ucp^\sigma_{\cl A}(\cl B(H))$. Then
\begin{equation}\label{eq_kR}\Theta_k\circ \Theta_{ki}=\sum_{j=1}^\infty\sum_{l=1}^\infty \Ad(a_{kj}^*)\circ\Psi^R_{ki}\circ(\Ad(c_{kl}^*)\circ\Phi_{ki})\end{equation}
in the point weak*-topology.

Recall that $v_{kj}$ was defined above for $k\in L$ and $j\in \bb N$; we define $v_{kj}:=1$ for $k\in R$ and $j\in\N$. Then
$$A:=\sum_{k=1}^\infty\sum_{j=1}^\infty a_{kj}^*v_{kj}^*v_{kj}a_{kj}\leq\sum_{k=1}^\infty\sum_{j=1}^\infty a_{kj}^*a_{kj}=1;$$
thus, the operator $(1-A)^{1/2}$ is a well-defined element of $\cl A$.
Moreover, since $a_{kj}\psi=\psi_{kj}=v_{kj}^*v_{kj}\psi_{kj}=v_{kj}^*v_{kj}a_{kj}\psi$ for all $j,k\in\N$,
we have
$\psi=\sum_{k=1}^\infty\sum_{j=1}^\infty a_{kj}^*a_{kj}\psi=A\psi$,
showing that $\psi\in\Ker(1-A)$ and thus that $\psi \in \Ker((1-A)^{1/2})$.
In particular,
\begin{equation}\label{eq_1-A}
\left(\Ad((1-A)^{1/2})\right)_*(\omega_\psi) = 0.
\end{equation}

Fix a probability distribution $(p_n)$ over $\N$ with $p_n > 0$ for each $n$. For $k\in L$ and $i,j,l\in \bb N$, define
$\Gamma^L_{kijl}\in \cp_{\cl B}^\sigma(\cl B(H))$ for $i\in\bN$ by
\[\Gamma^L_{kijl}=\left(\Ad(a_{kj}^*)\circ (\Ad (a_{kijl}^*)+\Ad(\widetilde{a_{kijl}}^*)) + p_kp_ip_j p_l\Ad((1-A)^{1/2})\right)\circ \Psi^L_{ki}.\]
Since
$$a_{kijl}^*a_{kijl}+\widetilde{a_{kijl}}^*\widetilde{a_{kijl}}=z_{kijl}^*z_{kijl}=v_{kj}^*Jc_{kl}^*b_{ki}^*b_{ki}c_{kl}Jv_{kj}$$
and $\ser i \ser l c_{kl}^*b_{ki}^*b_{ki}c_{kl}=1$ for every $k\in L$, in the weak* topology we have
\begin{align*}\sum_{k\in L}\sum_{i=1}^\infty\ser j\ser l\Gamma^L_{kijl}(1)
&=\sum_{k\in L}\sum_{i=1}^\infty\sum_{j=1}^\infty\sum_{l=1}^\infty a_{kj}^*z_{kijl}^*z_{kijl}
                                                                                a_{kj}+p_kp_ip_jp_l(1-A)\\
&=\sum_{k\in L}\sum_{j=1}^\infty a_{kj}^*v_{kj}^*v_{kj}a_{kj} +p_kp_j(1-A).
\end{align*}
For $k\in R$, define one-way right local maps $\Gamma^R_{ki}$ by
$$\Gamma^R_{ki}=\sum_{j=1}^\infty\sum_{l=1}^\infty(\Ad(a_{kj}^*)\circ\Psi^R_{ki}+p_kp_ip_j\Ad((1-A)^{1/2}))\circ(\Ad(c_{kl}^*)\circ\Phi_{ki}).$$
Then
\begin{align*}\sum_{k\in R}\sum_{i=1}^\infty\Gamma^R_{ki}(1)&=\sum_{k\in R}\sum_{i=1}^\infty\sum_{j=1}^\infty a_{kj}^*\Psi^R_{ki}(1)a_{kj}+p_kp_ip_j(1-A)\\
&=\sum_{k\in R}\sum_{j=1}^\infty a_{kj}^*a_{kj}+p_kp_j(1-A).\\
\end{align*}
Putting things together, we get
\begin{align*}&\sum_{k\in L}\sum_{i=1}^\infty\ser j\ser l\Gamma^L_{kijl}(1)+ \sum_{k\in R}\sum_{i=1}^\infty\Gamma^R_{ki}(1)\\
&=\sum_{k\in L}\sum_{j=1}^\infty a_{kj}^*v_{kj}^*v_{kj}a_{kj} +p_kp_j(1-A)+\sum_{k\in R}\sum_{j=1}^\infty a_{kj}^*a_{kj}+p_kp_j(1-A)\\
&=\sum_{k=1}^\infty\sum_{j=1}^\infty a_{kj}^*v_{kj}^*v_{kj}a_{kj} +p_kp_j(1-A)\\
&=A+(1-A)\\
&=1.
\end{align*}

It follows that the series
$$\sum_{k\in L}\sum_{i=1}^\infty\sum_{j=1}^\infty \sum_{l=1}^\infty \Ad(u_{kijl}^*)\circ \Gamma^L_{kijl}(x)+\sum_{k\in R}\sum_{i=1}^\infty\Gamma^R_{ki}(x)$$
is convergent in the weak* topology for every positive $x\in\cl B(H)$.
By polarisation,
it is convergent in the weak* topology for every $x\in\cl B(H)$. With
$$\Gamma^L_{ki}:=\sum_{j=1}^\infty \sum_{l=1}^\infty \Ad(u_{kijl}^*)\circ \Gamma^L_{kijl},$$
and $\Gamma_{ki}:=\Gamma^L_{ki}$ for $k\in L$, $i\in\N$, and $\Gamma_{ki}:=\Gamma^R_{ki}$ for $k\in R$, $i\in\N$,
it follows that $(\Gamma_{ki})_{k,i}$ is a coarse-graining of a one-way right instrument relative to $\cl A$.

By \eqref{eq_kR} and \eqref{eq_1-A}, for $k\in R$ we have $\Theta_{ki*}(\Theta_{k*}(\om_\psi))=\Gamma_{ki*}(\om_\psi)$, $i\in\N$. Also,
using \eqref{e_rl}, \eqref{e_bound1}, \eqref{eq_kij1} and \eqref{eq_1-A} with the identity $\Ad(a_{kj})\omega_\psi=\omega_{\psi_{kj}}$
and Lemma~\ref{l_dini} consecutively, we have
\begingroup\allowdisplaybreaks
\begin{align*}
&\ser k\ser i\nor{\Gamma_{ki*}(\om_\psi)-\Theta_{ki*}(\Theta_{k*}(\om_\psi))}\\
&\leq\sum_{k\in L}\sum_{i,j,l=1}^\infty\nor{\Psi^L_{ki*} \circ\left(\Ad(u_{kijl})\circ(\Ad(a_{kijl})+\Ad(\widetilde{a_{kijl}}))
  -\Ad(b_{ki}c_{kl}) \right)\omega_{\psi_{kj}}}\\
&\leq\sum_{k\in L}\sum_{i,j,l=1}^\infty\nor{u_{kijl}(a_{kijl}\om_{\psi_k}a_{kijl}^*+\widetilde{a_{kijl}}\om_{\psi_{kj}}\widetilde{a_{kijl}}^*)u_{kijl}^*-b_{ki}c_{kl}\om_{\psi_{kj}}c_{kl}^*b_{ki}}\\
&\leq\sum_{k\in L}\sum_{i,j,l=1}^\infty\nor{u_{kijl}a_{kijl}\om_{\psi_{kj}}a_{kijl}^*u_{kijl}^*-b_{ki}c_{kl}\om_{\psi_{kj}}c_{kl}^*b_{ki}}+\nor{\widetilde{a_{kijl}}\om_{\psi_{kj}}\widetilde{a_{kijl}}^*}\\
&\leq\sum_{k\in L}\sum_{i,j,l=1}^\infty\nor{u_{kijl}a_{kijl}\psi_{kj}-b_{ki}c_{kl}\psi_{kj}}(\nor{a_{kijl}\psi_{kj}}+\nor{b_{ki}c_{kl}\psi_{kj}})+\nor{\widetilde{a_{kijl}}\psi_{kj}}^2\\
&<2\sum_{k\in L}\sum_{i,j,l=1}^\infty\frac{\ep}{2^{k + i + j+l+1}} \leq \ep.
\end{align*}\endgroup
%
Since $\cl J=(\Lambda_k)_k$ is a coarse-graining of $(\Theta_k\circ \Theta_{ki})_{k,i}$, applying the same coarse-graining to $(\Gamma_{ki})_{k,i}$ produces an instrument~$(\Gamma_k)_k$ satisfying~\eqref{eq:P(1)}.
\end{proof}
\begin{theorem}\label{th_lopopescu_std} 
Let $(\cl A,\tau)$ be a semi-finite factor in its standard form on $H=L^2(\A,\tau)$.
For any $\Theta\in\locc(\cl A)$, $\psi\in H$
and $\ep > 0$, there exists $\Theta_\ep \in \LOCC^r(\A)$ such that
$\nor{\Theta_*(\om_\psi)-\Theta_{\ep*}(\om_\psi)} < \ep$.
\end{theorem}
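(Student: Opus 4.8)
The plan is to induct on the number $n$ of rounds in a defining chain $(\cl I_0,\dots,\cl I_n)$ for $\Theta=\Theta_{\cl I_n}$, using Proposition~\ref{prop_p1} as the engine. The key is to propagate through the induction not merely closeness of the total channels, but a \emph{componentwise} $\ell^1$-estimate at $\om_\psi$: I will produce, for $\cl I_n=(\Lambda_k)_k$, a coarse-graining $(\Gamma_k)_k$ of some one-way right instrument with $\sum_k\nor{\Lambda_{k*}(\om_\psi)-\Gamma_{k*}(\om_\psi)}<\ep$. Since coarse-graining leaves the total channel unchanged, $\Theta_\ep:=\sum_k\Gamma_k$ lies in $\LOCC^r(\A)$, and summing the componentwise bound gives $\nor{\Theta_*(\om_\psi)-\Theta_{\ep*}(\om_\psi)}\le\sum_k\nor{\Lambda_{k*}(\om_\psi)-\Gamma_{k*}(\om_\psi)}<\ep$, which is the assertion. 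I may assume $\nor\psi\le1$. Before inducting I would normalise the chain so that $\cl I_0$ is one-way \emph{right}: if $\cl I_0$ is one-way left, prepend the trivial instrument $\cl I_{-1}=(\id)$, which is one-way right since $\id\in\UCP^\sigma_\A(\BH)$ and $\id\in\CP^\sigma_\B(\BH)$, noting that $\cl I_0$ is then linked to $\cl I_{-1}$; this raises $n$ by one but costs nothing. The base case $n=0$ is immediate, as a one-way right instrument is a coarse-graining of itself, and $n=1$ is exactly Proposition~\ref{prop_p1}.

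For the inductive step, assume the claim for chains of length $n-1$ and take $(\cl I_0,\dots,\cl I_n)$ with $\cl I_0$ one-way right. Applying the hypothesis to $(\cl I_0,\dots,\cl I_{n-1})$ produces a coarse-graining $(\Gamma^{(n-1)}_k)_k$, sharing the index set of $\cl I_{n-1}=(\Lambda^{(n-1)}_k)_k$, of a genuine one-way right instrument $\hat{\cl G}=(\hat\Gamma_m)_m$, with $\sum_k\nor{\Lambda^{(n-1)}_{k*}(\om_\psi)-\Gamma^{(n-1)}_{k*}(\om_\psi)}<\delta:=\ep/2$; write $\Gamma^{(n-1)}_k=\sum_{m\in S_k}\hat\Gamma_m$ for the associated partition $\{S_k\}$. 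Since $\cl I_n$ is linked to $\cl I_{n-1}$, it is a coarse-graining of $(\Lambda^{(n-1)}_k\circ\Theta_{ki})_{k,i}$ for one-way instruments $\cl J_k=(\Theta_{ki})_i$.

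Two estimates are then combined. First, replacing $\Lambda^{(n-1)}_k$ by $\Gamma^{(n-1)}_k$ is cheap: each $\cl J_k$ is an instrument, so $\sum_i\Theta_{ki}$ is unital and $\sum_i\Theta_{ki*}$ is trace preserving, whence $\sum_i\nor{\Theta_{ki*}(\sigma)}_1\le\nor{\sigma}_1$ for self-adjoint $\sigma$ (split into positive and negative parts); applied to $\sigma_k=\Lambda^{(n-1)}_{k*}(\om_\psi)-\Gamma^{(n-1)}_{k*}(\om_\psi)$ and summed, this bounds $\sum_{k,i}\nor{(\Lambda^{(n-1)}_k\circ\Theta_{ki})_*(\om_\psi)-(\Gamma^{(n-1)}_k\circ\Theta_{ki})_*(\om_\psi)}$ by $\delta$. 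Secondly, letting $k(m)$ denote the block of $m$, the instrument $(\hat\Gamma_m\circ\Theta_{k(m)i})_{m,i}$ is linked to the one-way right instrument $\hat{\cl G}$, so Proposition~\ref{prop_p1} yields a coarse-graining $(\Xi_{m,i})_{m,i}$ of a one-way right instrument with $\sum_{m,i}\nor{(\hat\Gamma_m\circ\Theta_{k(m)i})_*(\om_\psi)-\Xi_{m,i*}(\om_\psi)}<\delta$. I would then reassemble: applying the coarse-graining that defines $\cl I_n$ to the maps $\Xi_{m,i}$ (grouping the $m$'s by the $S_k$) gives an instrument $(\Gamma^{(n)}_k)_k$ that is a coarse-graining of a one-way right instrument, and a telescoping estimate through the intermediate family $(\Gamma^{(n-1)}_k\circ\Theta_{ki})=\sum_{m\in S_k}\hat\Gamma_m\circ\Theta_{k(m)i}$ bounds $\sum_k\nor{\Lambda^{(n)}_{k*}(\om_\psi)-\Gamma^{(n)}_{k*}(\om_\psi)}$ by the sum of the two displayed bounds, i.e.\ by $2\delta=\ep$, closing the induction.

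The analytic content is entirely carried by Proposition~\ref{prop_p1} (and the Lo--Popescu-type Proposition~\ref{p_lopopescu} behind it), together with the elementary contractivity of predual channels. I expect the main difficulty to be organisational rather than analytic: one must track how the index sets transform under successive coarse-grainings so that the linked instrument fed into Proposition~\ref{prop_p1} and the final reassembling coarse-graining are correctly matched, and verify that the $\ell^1$-errors genuinely add across rounds instead of compounding multiplicatively.
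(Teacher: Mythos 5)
Your proposal is correct and follows essentially the same route as the paper: the same induction on the number of rounds with the same componentwise $\ell^1$-statement $P(n)$, the same normalisation making $\cl I_0$ one-way right, Proposition~\ref{prop_p1} as the base case and engine, and the same two-term telescoping through $(\Gamma_k^{(n-1)}\circ\Theta_{ki})$ in the inductive step. The only cosmetic difference is that you justify the bound $\sum_i\nor{\Theta_{ki*}(\sigma)}_1\le\nor{\sigma}_1$ via the Jordan decomposition and trace preservation, where the paper routes the same estimate through Lemma~\ref{l_ineq}.
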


\begin{proof}
  We may assume that $\nor{\psi}\le1$. Any LOCC map $\Theta\in\locc(\cl A)$ is of the form $\Theta=\Theta_{\cl{I}_n}$ where $(\cl{I}_0,\dots,\cl{I}_n)$ is a sequence of instruments such that $\cl{I}_0$ is one-way local relative to $\cl A$, and $\cl I_{l+1}$ is linked to $\cl{I}_{l}$ for each $l=0,\dots,n-1$. Since the trivial instrument $(\id,0,0,\dots)$ is one-way right local and any one-way left local instrument is linked to it, without loss of generality, we may suppose that $\cl{I}_0$ is a one-way right instrument. Writing $\cl I_n=(\Theta_k^{(n)})_{k}$,  consider the following proposition $P(n)$: for every $\ep>0$ there exists an instrument $\cl I_{\ep}=(\Gamma_{k})_{k}$ which is a coarse-graining of some one-way right instrument relative to~$\cl A$, such that
$$\sum_{k=1}^\infty\norm{\Theta_{k*}^{(n)}(\om_\psi)-\Gamma_{k*}(\om_\psi)}<\ep.$$
If $P(n)$ were true for every $n\in\N$ then the Theorem follows with $\Theta_{\ep}=\Theta_{\cl I_{\ep}}$. We therefore proceed by induction on $n$, noting that $P(0)$ is vacuously true and $P(1)$ is precisely Proposition~\ref{prop_p1}.



Assuming $P(n)$ is true, let $\Theta$ be an LOCC map of the form $\Theta=\Theta_{\cl{I}_{n+1}}$ where $(\cl{I}_0,\dots,\cl{I}_{n+1})$ is a sequence of instruments such that $\cl{I}_0$ is a one-way local right instrument relative to $\cl A$,
and $\cl I_{l+1}$ is linked to $\cl{I}_{l}$ for each $l=0,\dots,n$.
Then $\cl I_{n+1}$ is a coarse-graining of $(\Theta_k\circ \Theta_{ki})_{k,i}$, where $\cl I_{n}=(\Theta_k)_k$ and
$\cl J_k=(\Theta_{ki})_i$ is a one-way instrument for each $k\in\N$. Given $\ep>0$,
by $P(n)$ there exists a coarse-graining 
$(\Gamma_{k})_{k}$ of some one-way right instrument such that
$$\sum_{k=1}^\infty\norm{\Theta_{k*}(\om_\psi)-\Gamma_{k*}(\om_\psi)}<\frac{\ep}{2}.$$

We have $\Gamma_k=\sum_{j\in S_k} \tilde \Gamma_j$ for some one-way right instrument $\cl I_0'=(\tilde\Gamma_j)_j$ and some partition $(S_k)_k$ of~$\bb N$. Let $\tilde \Theta_{ji}=\Theta_{ki}$ for $j\in S_k$, $k\in \bb N$. Then $(\tilde \Theta_{ji})_i$ is a one-way instrument for each~$j\in \bb N$, and $(\Gamma_k\circ \Theta_{ki})_{k,i}$ is a coarse-graining of $\cl I_1'=(\tilde \Gamma_j\circ \tilde \Theta_{ji})_{j,i}$, and $\cl I_1'$ is linked to $\cl I_0'$. Applying Proposition~\ref{prop_p1} to the pair $(\cl I_0', \cl I_1')$, we obtain an instrument $(\hat \Gamma_{ji})_{j,i}$ which is a coarse-graining of some one-way right instrument, and satisfies \[\sum_{j,i=1}^\infty\nor{\tilde \Theta_{ji*}( \tilde \Gamma_{j*}(\omega_\psi))-\hat\Gamma_{ji*}(\omega_\psi)}<\frac\epsilon2.\] Setting $\Gamma_{ki}:=\sum_{j\in S_k}\hat \Gamma_{ji}$, we obtain an instrument $(\Gamma_{ki})_{k,i}$ which is a coarse-graining of a one-way right instrument, and satisfies
\begin{align*}
\ser k\ser i\norm{\Theta_{ki*}(\Gamma_{k*}(\om_\psi))-\Gamma_{ki*}(\om_\psi)}&\le 
\ser k\ser i\sum_{j\in S_k}\norm{\tilde\Theta_{ji*}(\tilde \Gamma_{j*}(\om_\psi))-\hat\Gamma_{ji*}(\om_\psi)}\\&=\ser i\ser j \norm{\tilde\Theta_{ji*}(\tilde \Gamma_{j*}(\om_\psi))-\hat\Gamma_{ji*}(\om_\psi)}<\frac{\ep}{2}.
\end{align*}
For each $k,i\in\N$, $\Theta_{ki*}((\Theta_{k*}-\Gamma_{k*})(\om_\psi))\in\cl T(H)$ is self-adjoint, and attains its norm on self-adjoint operators in $\BH$. Hence, there exists $T_{ki}\in\BH$ with $T_{ki}=T_{ki}^*$ and $\norm{T_{ki}}\leq 1$ satisfying
$$\norm{\Theta_{ki*}((\Theta_{k*}-\Gamma_{k*})(\om_\psi))}=|\la\Theta_{ki}(T_{ki}),(\Theta_{k*}-\Gamma_{k*})(\om_\psi)\ra|.$$
By Lemma~\ref{l_ineq},
\begin{align*}\norm{\Theta_{ki*}((\Theta_{k*}-\Gamma_{k*})(\om_\psi))}&\leq\norm{T_{ki}}\la\Theta_{ki}(1),|(\Theta_{k*}-\Gamma_{k*})(\om_\psi)|\ra\\
&\leq\la\Theta_{ki}(1),|(\Theta_{k*}-\Gamma_{k*})(\om_\psi)|\ra.
\end{align*}
Hence,
\begin{align*}\sum_{k,i=1}^\infty\norm{\Theta_{ki*}(\Theta_{k*}(\om_\psi))-\Theta_{ki*}(\Gamma_{k*}(\om_\psi))}&\leq\sum_{k=1}^\infty\sum_{i=1}^\infty\la\Theta_{ki}(1),|(\Theta_{k*}-\Gamma_{k*})(\om_\psi)|\ra\\
&=\sum_{k=1}^\infty\norm{\Theta_{k*}(\om_\psi)-\Gamma_{k*}(\om_\psi)}\\
&<\frac{\ep}{2}.
\end{align*}
By the triangle inequality we obtain
$$\ser k\ser i\norm{\Theta_{ki*}(\Theta_{k*}(\om_\psi))-\Gamma_{ki*}(\om_\psi)}<\ep.$$
Recall that $\cl I_{n+1}$ is a coarse-graining  of $(\Theta_k\circ \Theta_{ki})_{k,i}$. Letting $\cl I_\epsilon$ be the result of applying the same coarse-graining to $(\Gamma_{ki})_{k,i}$, the preceding inequality and the triangle inequality then show that $\cl I_\epsilon$ satisfies $P(n+1)$.
\end{proof}

\begin{remark} The proof of Theorem~\ref{th_lopopescu_std} may seem
complicated when compared to Lo and Popescu's intuitive argument for the special case $\A=M_n\otimes 1$. This may be explained by our approximate version of convertibility together with the additional approximation provided by Proposition~\ref{p_lopopescu},
the latter not being required in the type I case.\end{remark}

Using the representation theory of properly infinite von Neumann algebras, we now remove the standardness assumption in the previous theorem. We require the following lemma.

\begin{lemma}\label{l:st} Let $\cl A\subseteq\BH$ be a von Neumann algebra, $\cl B=\cl A'$, $K$ be a Hilbert space, $\widetilde{\cl A} :=\cl A\oten \cl B(K)\oten 1_K$ and  $\widetilde{\cl B}:=\widetilde{\cl A}'=\cl B\oten 1_K\oten\cl B(K)$. Fix $\xi\in K_1$. If $\widetilde{\Xi}\in\locc^r(\widetilde{\cl A})$, then $\cl E\circ\widetilde{\Xi}\circ \iota\in\locc^r(\cl A)$, where 
$$\cl E:\cl B(H\ten K\ten K)\ni \widetilde{T}\mapsto(\id\ten\om_\xi\ten\om_\xi)(\widetilde{T})\in\cl B(H)$$
and $\iota:\cl B(H)\rightarrow \cl B(H\ten K\ten K)$ is the canonical embedding $T\mapsto T\ten 1_K\ten 1_K$. 
\end{lemma}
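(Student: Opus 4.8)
The plan is to expand $\widetilde\Xi$ into its defining one-way right local pieces and to show that each piece, after precomposing with $\iota$ and postcomposing with $\cl E$, descends to a one-way right local map relative to $\cl A$. Throughout I name the three tensor legs of $H\ten K\ten K$ as $H$, $K_A$ (Alice's ancilla) and $K_B$ (Bob's ancilla), so that $\widetilde{\cl A}=\cl A\oten\cl B(K_A)\oten 1_{K_B}$ and $\widetilde{\cl B}=\cl B\oten 1_{K_A}\oten\cl B(K_B)$. Since $H\ten K\ten K$ is separable, Proposition~\ref{p_comp} applies to $\widetilde\Xi\in\locc^r(\widetilde{\cl A})$ and gives a point weak*-convergent expansion $\widetilde\Xi=\sum_k\widetilde\Phi_k\circ\widetilde\Psi_k$ in which $\widetilde\Psi_k=\Ad(\tilde a_k^*)$ with $\tilde a_k=\hat a_k\ten 1_{K_B}$, $\hat a_k\in\cl A\oten\cl B(K_A)$, $\sum_k\tilde a_k^*\tilde a_k=1$, and $\widetilde\Phi_k=\sum_i\Ad(\tilde c_{ki}^*)$ with each $\tilde c_{ki}\in\cl B\oten 1_{K_A}\oten\cl B(K_B)$ trivial on $K_A$ and $\sum_i\tilde c_{ki}^*\tilde c_{ki}=1$ for every $k$.

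Next I would define candidate local maps on $\cl B(H)$ by compressing the relevant ancilla in the state $\xi$:
\[\Psi_k(T)=(\id_H\ten\om_\xi^{K_A})\bigl(\hat a_k^*(T\ten 1_{K_A})\hat a_k\bigr),\qquad \Phi_k(S)=(\id_H\ten\om_\xi^{K_B})\Bigl(\sum_i\check c_{ki}^*(S\ten 1_{K_B})\check c_{ki}\Bigr),\]
where $\check c_{ki}\in\cl B\oten\cl B(K_B)$ denotes $\tilde c_{ki}$ with its trivial $K_A$-leg deleted. Both maps are normal and completely positive. The key structural input, which is exactly what encodes locality, is the tensor commutation theorem: inside $\cl B(H\ten K_A)$ one has $(\cl A\oten\cl B(K_A))'=\cl B\oten 1_{K_A}$, so $b\ten 1_{K_A}$ commutes with $\hat a_k$ for every $b\in\cl B$, and slicing by $\om_\xi^{K_A}$ yields $\Psi_k(bTb')=b\Psi_k(T)b'$, i.e.\ $\Psi_k\in\cp^\sigma_{\cl B}(\cl B(H))$. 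Symmetrically, $(\cl B\oten\cl B(K_B))'=\cl A\oten 1_{K_B}$ shows $\Phi_k$ is an $\cl A$-bimodule map, while $\sum_i\check c_{ki}^*\check c_{ki}=1_{H\ten K_B}$ (inherited from $\sum_i\tilde c_{ki}^*\tilde c_{ki}=1$) gives $\Phi_k(1_H)=1_H$, so $\Phi_k\in\ucp^\sigma_{\cl A}(\cl B(H))$. Hence each $\Theta_k:=\Phi_k\circ\Psi_k$ is a one-way right local map relative to $\cl A$.

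The computational heart is to verify $\cl E\circ\widetilde\Phi_k\circ\widetilde\Psi_k\circ\iota=\Phi_k\circ\Psi_k$ for each $k$. I would factor $\cl E=\cl E_B\circ\cl E_A$ into the successive ancilla compressions $\cl E_A=\id_H\ten\om_\xi^{K_A}\ten\id_{K_B}$ and $\cl E_B=\id_H\ten\om_\xi^{K_B}$. Writing $\cl E_A=\Ad(V_A^*)$ for the isometry $V_A:H\ten K_B\to H\ten K_A\ten K_B$ that inserts $\xi$ into the $K_A$-slot, the triviality of $\tilde c_{ki}$ on $K_A$ gives $\tilde c_{ki}V_A=V_A\check c_{ki}$, whence $\cl E_A\circ\widetilde\Phi_k=\check\Phi_k\circ\cl E_A$ with $\check\Phi_k=\sum_i\Ad(\check c_{ki}^*)$ on $\cl B(H\ten K_B)$; on the other hand a direct computation gives $\cl E_A(\widetilde\Psi_k(\iota(T)))=\Psi_k(T)\ten 1_{K_B}$. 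Combining these and applying $\cl E_B$ produces the claimed identity. I expect this leg-by-leg bookkeeping, rather than any deep idea, to be the main obstacle: one must keep the two ancillas and their compressions straight and check that each compression commutes past the maps acting on the other ancilla.

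Finally I would assemble the pieces. Since $\cl E$ and $\iota$ are normal completely positive and $\widetilde\Xi=\sum_k\widetilde\Phi_k\circ\widetilde\Psi_k$ converges point weak*, normality of $\cl E$ gives $\cl E\circ\widetilde\Xi\circ\iota=\sum_k\Theta_k$ point weak*. The limit $\Theta:=\cl E\circ\widetilde\Xi\circ\iota$ is normal and completely positive as a composition of such maps, and unital because $\iota(1)=1$, $\widetilde\Xi(1)=1$ (as $\widetilde\Xi\in\locc^r(\widetilde{\cl A})$ is unital) and $\cl E(1)=1$. Therefore $(\Theta_k)_k$ is a one-way right instrument relative to $\cl A$ with $\Theta_{(\Theta_k)_k}=\Theta$, so $\cl E\circ\widetilde\Xi\circ\iota\in\locc^r(\cl A)$, as required.
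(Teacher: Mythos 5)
Your proof is correct and follows essentially the same route as the paper's: both compress each one-way right local piece of $\widetilde\Xi$ by $\cl E$ and $\iota$ to obtain maps $\Psi_k\in\cp^\sigma_{\cl B}(\BH)$ and $\Phi_k\in\ucp^\sigma_{\cl A}(\BH)$ (your leg-by-leg definitions agree with the paper's $\cl E\circ\widetilde\Psi_k\circ\iota$ and $\cl E\circ\widetilde\Phi_k\circ\iota$, since the unused ancilla contributes only a factor $\om_\xi(1)=1$), and then verify termwise that the compression of $\widetilde\Xi$ equals $\sum_k\Phi_k\circ\Psi_k$. The only difference is cosmetic: you carry out the central identity with explicit Kraus operators and intertwining isometries, whereas the paper argues in the predual using flip maps on a five-fold tensor product.
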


\begin{proof} Let $\widetilde{\Psi}_k\in \CP^\sigma_{\widetilde{\cl B}}(\cl B(H\ten K\ten K))$ and $\widetilde{\Phi}_k\in \UCP^\sigma_{\widetilde{\cl A}}(\cl B(H\ten K\ten K))$ satisfy $\widetilde{\Xi}= \sum_{k=1}^\infty \widetilde{\Psi}_k\circ\widetilde{\Phi}_k$.
Define $\Psi_k\in \CP^\sigma_{\cl B}(\cl B(H))$ and $\Phi_k\in \UCP^\sigma_{\cl A}(\cl B(H))$ by
\[\Psi_k=\cl E\circ \widetilde{\Psi}_k|_{\BH\ten 1_K\ten 1_K}\circ \iota\qand \Phi_k=\cl E\circ \widetilde{\Phi}_k|_{\BH\ten 1_K\ten 1_K}\circ \iota.\]
Then $\Xi:=\sum_{k=1}^\infty \Psi_k\circ\Phi_k$ is a one-way right LOCC map on $\BH$ relative to
$\cl A$ so it only remains to show that $\Xi=\cl E\circ \tilde \Xi\circ \iota$.
Let $\sigma_{r,s}$, for $r,s\in \{2,3,4,5\}$, be the flip between terms $r$ and $s$
acting on the tensor product $H\otimes K \otimes K \otimes K \otimes K$.
For every $T\in\BH$ and $\rho\in\cl{T}(H)$, we have
\bgroup\allowdisplaybreaks
\begin{align*}
&\duality{\rho}{\Xi(T)}=\sum_{k=1}^\infty\duality{(\Psi_k)_*(\rho)}{\Phi_k(T)}\\
&=\sum_{k=1}^\infty\duality{(\widetilde{\Psi}_k)_*(\rho\ten\om_\xi\ten\om_\xi)}{\cl E(\widetilde{\Phi}_k(T\ten 1\ten 1))\ten 1\ten 1}\\
&=\sum_{k=1}^\infty\duality{(\cl E_*\ten \id\ten \id)((\widetilde{\Psi}_k)_*(\rho\ten\om_\xi\ten\om_\xi))}{\widetilde{\Phi}_k(T\ten 1\ten 1)\ten 1\ten 1}\\
&=\sum_{k=1}^\infty\duality{\sigma_{24}\sigma_{35}((\widetilde{\Psi}_k)_*(\rho\ten\om_\xi\ten\om_\xi)\ten\om_\xi\ten\om_\xi)}{\widetilde{\Phi}_k(T\ten 1\ten 1)\ten 1\ten 1}\\
&=\sum_{k=1}^\infty\duality{\sigma_{35}((\widetilde{\Psi}_k)_*(\rho\ten\om_\xi\ten\om_\xi)\ten\om_\xi\ten\om_\xi)}{\widetilde{\Phi}_k(T\ten 1\ten 1)\ten 1\ten 1}\\
&=\sum_{k=1}^\infty\duality{(\widetilde{\Psi}_k)_*(\rho\ten\om_\xi\ten\om_\xi)\ten\om_\xi\ten\om_\xi}{\widetilde{\Phi}_k(T\ten 1\ten 1)\ten 1\ten 1}\\
&=\duality{\rho\ten\om_\xi\ten\om_\xi\ten\om_\xi\ten\om_\xi}{\widetilde{\Xi}(T\ten 1\ten 1)\ten 1\ten 1}\\
&=\duality{\rho\ten\om_\xi\ten\om_\xi}{\widetilde{\Xi}(T\ten 1\ten 1)}\\
&=\duality{\rho}{\cl E\circ\widetilde{\Xi}\circ\iota(T)},
\end{align*}
\egroup
where in the fifth equality we used the fact that $\widetilde{\Phi}_k(T\ten 1\ten 1)\ten 1\ten 1\in \cl B(H)\ten 1\ten \cl B(K)\ten 1\ten 1$ is symmetric under $\sigma_{24}$ and in the sixth equality we used the fact that $(\widetilde{\Psi}_k)_*$ acts trivially on the third leg. (These facts follow from Proposition~\ref{p_comp}, for example.)
\end{proof}

Recall that a von Neumann algebra is $\sigma$-finite if every set of mutually orthogonal projections is at most countable, and that every von Neumann algebra $\A\subseteq\BH$ on a separable Hilbert space~$H$ enjoys this property.

\begin{theorem}\label{t:lopopescu} Let $\cl A\subseteq\BH$ be a semi-finite
factor on a separable Hilbert space~$H$.
Given $\Theta\in\locc(\cl A)$ and $\psi\in H$, for every $\ep>0$ there exists
$\Xi \in \LOCC^r(\A)$ such that
$$\nor{\Theta_*(\om_\psi)-\Xi_*(\om_\psi)} < \ep.$$
\end{theorem}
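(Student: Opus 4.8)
The plan is to deduce this from the standard-form case, Theorem~\ref{th_lopopescu_std}, by amplifying $\cl A$ into a standardly represented algebra, transporting the data upstairs, solving the problem there, and pushing the solution back down with Lemma~\ref{l:st}. Fix a separable infinite-dimensional Hilbert space $K$ and, as in Lemma~\ref{l:st}, set $\widetilde{\cl A}=\cl A\oten\cl B(K)\oten 1_K$ on $H\ten K\ten K$, with commutant $\widetilde{\cl B}=\widetilde{\cl A}'=\cl B\oten 1_K\oten\cl B(K)$. As $\cl A$ is a semi-finite factor and $\cl B(K)$ is a type~$\mathrm{I}_\infty$ factor, $\widetilde{\cl A}\cong\cl A\oten\cl B(K)$ is again a semi-finite factor, now properly infinite, carrying the normal semi-finite faithful trace $\tau\oten\tr$.

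The crux is to check that $\widetilde{\cl A}$ is in \emph{standard form} on $H\ten K\ten K$, so that Theorem~\ref{th_lopopescu_std} applies. I would argue this representation-theoretically, via the fact that for a von Neumann algebra with separable predual the $\ell^2$-amplifications of any two faithful normal representations are unitarily equivalent (faithful normal representations are mutually quasi-equivalent, and quasi-equivalent representations become unitarily equivalent after amplification by $\ell^2$). The given representation of $\widetilde{\cl A}$ on $H\ten K\ten K$ is the amplification by the last (infinite-dimensional) copy of $K$ of the faithful normal representation of $\cl A\oten\cl B(K)$ on $H\ten K$. On the other hand, under the identification $L^2(\widetilde{\cl A},\tau\oten\tr)\cong L^2(\cl A,\tau)\oten\cl S_2(K)\cong L^2(\cl A,\tau)\oten K\oten\overline K$, the standard representation of $\widetilde{\cl A}$ is the amplification by $\overline K$ of the faithful normal representation $\pi_\tau\oten(\text{left multiplication})$ of $\cl A\oten\cl B(K)$ on $L^2(\cl A,\tau)\oten K$. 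Both are thus $\ell^2$-amplifications of faithful normal representations of $\cl A\oten\cl B(K)$, hence unitarily equivalent by some $U$; since $U$ carries commutant onto commutant and standardness is a unitary invariant, $\widetilde{\cl A}$ is standardly represented on $H\ten K\ten K$.

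With standardness secured, the rest is bookkeeping built around Lemma~\ref{l:st}. Fix $\xi\in K_1$ and set $\widetilde\psi:=\psi\ten\xi\ten\xi$, so that $\om_{\widetilde\psi}=\cl E_*(\om_\psi)$ with $\cl E,\iota$ as in Lemma~\ref{l:st}. I would lift $\Theta$ to $\widetilde\Theta:=\Theta\oten\id_{\cl B(K)\oten\cl B(K)}$: tensoring each implementing map $\Phi_k\in\ucp^\sigma_{\cl A}(\BH)$ (resp.\ $\Psi_k\in\cp^\sigma_{\cl B}(\BH)$) with $\id_{\cl B(K)\oten\cl B(K)}$ preserves normal complete positivity and the bimodule requirement---$\Phi_k\oten\id$ is $\widetilde{\cl A}$-modular and $\Psi_k\oten\id$ is $\widetilde{\cl B}$-modular---while tensoring commutes with forming instruments, coarse-graining and linking; hence $\widetilde\Theta\in\locc(\widetilde{\cl A})$. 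A direct computation gives $\cl E\circ\widetilde\Theta\circ\iota=\Theta$, so that $\Theta_*=\iota_*\circ\widetilde\Theta_*\circ\cl E_*$. Applying Theorem~\ref{th_lopopescu_std} to the standardly represented semi-finite factor $\widetilde{\cl A}$, the vector $\widetilde\psi$ and $\ep$, I obtain $\widetilde\Xi\in\LOCC^r(\widetilde{\cl A})$ with $\nor{\widetilde\Theta_*(\om_{\widetilde\psi})-\widetilde\Xi_*(\om_{\widetilde\psi})}<\ep$. Setting $\Xi:=\cl E\circ\widetilde\Xi\circ\iota$, Lemma~\ref{l:st} gives $\Xi\in\LOCC^r(\cl A)$. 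Finally, since $\Theta_*(\om_\psi)=\iota_*(\widetilde\Theta_*(\om_{\widetilde\psi}))$ and $\Xi_*(\om_\psi)=\iota_*(\widetilde\Xi_*(\om_{\widetilde\psi}))$, and $\iota_*$ (a partial trace, being the predual of the unital normal $*$-homomorphism $\iota$) is a trace-norm contraction, we conclude $\nor{\Theta_*(\om_\psi)-\Xi_*(\om_\psi)}\le\nor{\widetilde\Theta_*(\om_{\widetilde\psi})-\widetilde\Xi_*(\om_{\widetilde\psi})}<\ep$. The main obstacle is the standard-form verification in the second paragraph; the lifting and the final estimate are routine.
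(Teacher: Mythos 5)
Your proof is correct and follows essentially the same route as the paper: amplify to $\widetilde{\cl A}=\cl A\oten\cl B(K)\oten 1_K$ acting on $H\ten K\ten K$, identify this representation with the standard one, apply Theorem~\ref{th_lopopescu_std} there, and compress back down via Lemma~\ref{l:st} together with the contractivity of $\iota_*$. The only (immaterial) difference is how the unitary equivalence with the standard representation is justified: you use quasi-equivalence of faithful normal representations plus infinite amplification, whereas the paper invokes \cite[Proposition V.3.1]{t1} after checking that the commutants in both representations are $\sigma$-finite and properly infinite; both arguments are standard and correct.
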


\begin{proof}
Clearly, we may assume that $\psi\in H_1$.
Let $K$ be a separable infinite-dimensional Hilbert space, and
consider the factors $\widetilde{\cl A} :=\cl A\oten \cl B(K)\oten 1_K$ and $\widetilde{\cl B}:=\widetilde{\cl A}'=\cl B\oten 1_K\oten\cl B(K)$,
acting on $H\ten K\ten K$, and equip $\widetilde{\cl A}$ with the trace $\widetilde{\tau} = \tau\otimes {\rm tr} \otimes {\rm tr}$.
Letting $L^{\infty}(\widetilde{\cl A})\subseteq\cl B(L^2(\widetilde{\cl A}))$ denote the standard representation of $\widetilde{\cl A}$, one sees that both $\widetilde{\cl B}=\widetilde{\cl A}'$ and $L^{\infty}(\widetilde{\cl A})'\cong L^{\infty}(\widetilde{\cl A})$ are $\sigma$-finite and properly infinite factors. Hence, by \cite[Proposition V.3.1]{t1},
the representations $(\widetilde{\cl A},H\ten K\ten K)$ and $(\widetilde{\cl A},L^2(\widetilde{\cl A}))$ are unitarily equivalent,
implemented by the unitary operator
$U : H\ten K \ten K\rightarrow L^2(\widetilde{\cl A})$, say.

Clearly, $\Ad(U)\circ (\Theta\ten\id_{\cl B(K\ten K)})\circ \Ad(U^*)\in\locc(L^\infty(\widetilde{\cl A}))$. Since~$L^\infty(\widetilde{\A})$ is a factor, by Theorem~\ref{th_lopopescu_std}, for every $\ep>0$ there exists a one-way right LOCC map $\widetilde{\Xi}:\cl B(L^2(\widetilde{\cl A}))\rightarrow\cl B(L^2(\widetilde{\cl A}))$ relative to $L^\infty(\widetilde{\cl A})$ such that
$$\norm{(\Ad(U)\circ (\Theta\ten\id_{\cl B(K\ten K)})\circ \Ad(U^*))_*(\om_{U(\psi\ten\xi\ten\xi)})-\widetilde{\Xi}_*(\om_{U(\psi\ten\xi\ten\xi)})}<\ep.$$
Then $\Ad(U^*)\circ\widetilde{\Xi}\circ\Ad(U):\cl B(H\ten K\ten K)\rightarrow\cl B(H\ten K\ten K)$ is a one-way right LOCC map relative to $\widetilde{\cl A}$ satisfying
$$\norm{(\Theta\ten\id_{\cl B(K\ten K)})_*(\om_{\psi\ten\xi\ten\xi})-(\Ad(U^*)\circ\widetilde{\Xi}\circ\Ad(U))_*(\om_{\psi\ten\xi\ten\xi})}<\ep.$$
Fix a vector $\xi\in K_1$. By Lemma~\ref{l:st}, the map
$$\Xi:=\cl E\circ(\Ad(U^*)\circ\widetilde{\Xi}\circ\Ad(U))\circ\iota\in\locc^r(\cl A)$$
where $\cl E$ and $\iota$ are defined as in the Lemma. It follows that
\begin{align*}
&\norm{\Theta_*(\om_\psi)-\Xi_*(\om_\psi)}\\
&=\norm{\iota_*\circ(\Theta\ten\id_{\cl B(K\ten K)})_*\circ\cl E_*(\om_\psi)-\iota_*\circ(\Ad(U^*)\circ\widetilde{\Xi}\circ\Ad(U))\circ\cl E_*(\om_\psi)}\\
&\le \norm{(\Theta\ten\id_{\cl B(K\ten K)})_*(\om_{\psi\ten\xi\ten\xi})-(\Ad(U^*)\circ\widetilde{\Xi}\circ\Ad(U))_*(\om_{\psi\ten\xi\ten\xi})}<\ep.\;\qedhere\end{align*}
\end{proof}

By left-right symmetry, Theorem~\ref{t:lopopescu} immediately yields the following corollary:

\begin{corollary}\label{c_oneway}
Let $\cl A\subseteq\BH$ be a semi-finite factor on a separable Hilbert space.
For unit vectors $\psi,\vphi\in H$, the following are equivalent:
\begin{enumerate}[(i)]
\item  $\psi$ is approximately convertible to $\vphi$ via
  $\locc(\cl A)$;
\item  $\psi$ is approximately convertible to $\vphi$ via $\locc^r(\cl A)$;
\item $\psi$ is approximately convertible to $\vphi$ via $\locc^l(\cl A)$.
\end{enumerate}
\end{corollary}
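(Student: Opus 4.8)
The plan is to deduce the three equivalences from Theorem~\ref{t:lopopescu} together with the left--right symmetry built into Definition~\ref{d_locc}. Since both a one-way right instrument and a one-way left instrument are one-way local instruments, the single-round case $n=0$ of Definition~\ref{d_locc}\,(vi) shows $\locc^r(\cl A)\subseteq\locc(\cl A)$ and $\locc^l(\cl A)\subseteq\locc(\cl A)$. Consequently the implications (ii)$\Rightarrow$(i) and (iii)$\Rightarrow$(i) are immediate, as any approximating sequence of right (or left) LOCC maps is in particular a sequence of LOCC maps. It therefore remains to prove (i)$\Rightarrow$(ii) and (i)$\Rightarrow$(iii).

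For (i)$\Rightarrow$(ii), I would fix $\ep>0$ and use approximate convertibility via $\locc(\cl A)$ to choose $\Theta\in\locc(\cl A)$ with $\nor{\Theta_*(\om_\psi)-\om_\vphi}<\ep/2$. Applying Theorem~\ref{t:lopopescu} to this $\Theta$ produces $\Xi\in\locc^r(\cl A)$ with $\nor{\Theta_*(\om_\psi)-\Xi_*(\om_\psi)}<\ep/2$, and the triangle inequality then gives $\nor{\Xi_*(\om_\psi)-\om_\vphi}<\ep$. As $\ep>0$ was arbitrary, $\psi$ is approximately convertible to $\vphi$ via $\locc^r(\cl A)$.

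The implication (i)$\Rightarrow$(iii) I would obtain by applying the equivalence just established to the commutant $\cl B=\cl A'$ in place of $\cl A$. The key point is that the commutation identity~\eqref{eq_comm} permits interchanging the two factors of a one-way local map, so that a one-way left local map relative to $\cl A$ (a composite $\Phi\circ\Psi$ with $\Phi\in\cp^\sigma_{\cl A}(\BH)$, $\Psi\in\ucp^\sigma_{\cl B}(\BH)$) equals $\Psi\circ\Phi$, which is exactly a one-way right local map relative to $\cl B$ since $(\cl B)'=\cl A$. Passing to instruments, linkings, and coarse-grainings this yields $\locc^l(\cl A)=\locc^r(\cl B)$, while the symmetry of Definition~\ref{d_locc}\,(vi) under the swap $\cl A\leftrightarrow\cl B$ gives $\locc(\cl A)=\locc(\cl B)$. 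Since $\cl B$ is again a semi-finite factor on the separable Hilbert space $H$, equipped with the trace $\tau'=\tau\circ R$ constructed in Section~2, Theorem~\ref{t:lopopescu} applies verbatim with $\cl B$ in the role of $\cl A$, showing that approximate convertibility via $\locc(\cl B)$ implies approximate convertibility via $\locc^r(\cl B)$. Rewriting this through the identifications $\locc(\cl B)=\locc(\cl A)$ and $\locc^r(\cl B)=\locc^l(\cl A)$ is precisely (i)$\Rightarrow$(iii).

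The analytic substance is entirely contained in Theorem~\ref{t:lopopescu}, so I expect no serious obstacle. The only points demanding care are the bookkeeping behind the identifications $\locc^l(\cl A)=\locc^r(\cl B)$ and $\locc(\cl A)=\locc(\cl B)$---which must be verified to respect the linking and coarse-graining operations, and not merely individual one-way local maps---together with the observation that $\cl B$ is semi-finite, so that Theorem~\ref{t:lopopescu} is genuinely applicable to it.
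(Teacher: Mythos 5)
Your proposal is correct and matches the paper's intent: the paper derives the corollary from Theorem~\ref{t:lopopescu} ``by left--right symmetry'', which is exactly the argument you spell out, including the identifications $\locc^l(\cl A)=\locc^r(\cl A')$ and $\locc(\cl A)=\locc(\cl A')$ via the commutation identity~\eqref{eq_comm}. Your added care about $\cl A'$ being a semi-finite factor and about the identifications respecting linking and coarse-graining is exactly the right bookkeeping, and no further ideas are needed.
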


\section{The Main Theorem}\label{s:main}

In this section we establish Theorem~\ref{th_maj}, a version of Nielsen's theorem for bipartite systems modelled by semi-finite, $\sigma$-finite von Neumann algebras (or by standardly represented von Neumann algebras).
The next group of lemmas will help justify certain technical arguments in its proof.

\begin{lemma}\label{l_M}
  Let $(\cl A,\tau)$ be a semi-finite von Neumann algebra. For any
  $\rho_1,\dots,\rho_n\in \A_*^+$, there exist
  $M_1,\dots,M_{n+1}\in \cl A$ with $\sum_{i=1}^{n+1}M_i^*M_i=1$ such
  that, for $\rho=\sum_{i=1}^n \rho_i$, we have
  \[M_i \rho M_i^*=\rho_i,\quad 1\le i\le n,\qand M_{n+1}\rho M_{n+1}=0.\]
\end{lemma}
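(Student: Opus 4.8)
The plan is to pass to density operators and thereby reduce the statement to a factorisation problem. Via the identification $\cl A_*\cong L^1(\cl A,\tau)$, write $h_i\in L^1(\cl A,\tau)^+$ for the density operator of $\rho_i$ and $h=\sum_{i=1}^n h_i$ for that of $\rho$. Using the bimodule compatibility of this identification together with the trace property, one checks that for $M\in\cl A$ the functional $M\rho M^*$ has density operator $MhM^*$: indeed
\[(M\rho M^*)(x)=\duality{M\rho M^*}{x}=\duality{\rho}{M^*xM}=\tau(hM^*xM)=\tau(MhM^*x).\]
Thus it suffices to produce $M_1,\dots,M_{n+1}\in\cl A$ with $\sum_{i=1}^{n+1}M_i^*M_i=1$, with $M_ihM_i^*=h_i$ for $1\le i\le n$, and with $M_{n+1}hM_{n+1}^*=0$.

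Since $0\le h_i\le h$ as positive $\tau$-measurable operators, I would invoke a noncommutative Douglas-type factorisation to obtain, for each $i$, a contraction $c_i\in\cl A$ with $h_i^{1/2}=c_ih^{1/2}$. Concretely, the form estimate $\nor{h_i^{1/2}\xi}\le\nor{h^{1/2}\xi}$, valid on $\operatorname{dom}(h^{1/2})\subseteq\operatorname{dom}(h_i^{1/2})$, shows that $h^{1/2}\xi\mapsto h_i^{1/2}\xi$ extends to a contraction on $\overline{\operatorname{ran}(h^{1/2})}=sH$, where $s=\operatorname{supp}(h)\in\P(\cl A)$, declared to be $0$ on $(1-s)H$. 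That this contraction lies in $\cl A$ rather than merely in $\BH$ follows from the standard commutation argument: $h^{1/2}$ and $h_i^{1/2}$ commute with every $a'\in\cl A'$, whence so does $c_i$. Setting $M_i:=c_i$ then gives $M_ihM_i^*=c_ih^{1/2}\cdot h^{1/2}c_i^*=h_i^{1/2}h_i^{1/2}=h_i$, and each $c_i$ is supported under $s$, i.e.\ $c_i=c_is$.

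Next I would verify the normalisation. Put $P:=\sum_{i=1}^n c_i^*c_i\in\cl A^+$. Because each $c_i$ is supported under $s$ we have $P=sPs$, and the factorisation yields
\[h^{1/2}Ph^{1/2}=\sum_{i=1}^n(c_ih^{1/2})^*(c_ih^{1/2})=\sum_{i=1}^n h_i=h=h^{1/2}sh^{1/2}.\]
Writing $D=P-s=sDs$, this gives $h^{1/2}Dh^{1/2}=0$; since $D$ is bounded, $\operatorname{ran}(h^{1/2})$ is dense in $sH$, and $D$ is supported under $s$, it follows that $D=0$, that is, $\sum_{i=1}^n M_i^*M_i=s$. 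I would then take $M_{n+1}:=1-s\in\P(\cl A)$, so that $\sum_{i=1}^{n+1}M_i^*M_i=s+(1-s)=1$ and $M_{n+1}hM_{n+1}^*=(1-s)h(1-s)=0$ (as $sh=h$). Translating back to functionals completes the argument.

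The main obstacle is the factorisation step together with the bookkeeping for unbounded operators. Establishing $h_i^{1/2}=c_ih^{1/2}$ with $c_i\in\cl A$ requires care about the domains of $h^{1/2}$ and $h_i^{1/2}$ and about the meaning of the products in $\tilde{\cl A}$, while the identity $\sum_{i=1}^n c_i^*c_i=s$ must be obtained through the dense (but non-closed) range of $h^{1/2}$ rather than by naive cancellation of $h^{1/2}$. Once these two points are secured, everything else is routine.
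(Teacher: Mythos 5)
Your proposal is correct, and it reaches the lemma by a genuinely different route from the paper. Both arguments ultimately hinge on producing $M_i\in\A$ with $M_i\rho^{1/2}=\rho_i^{1/2}$ and $\sum_{i=1}^n M_i^*M_i$ equal to the support projection of $\rho$, but you obtain the factorisation by a direct Douglas-type argument inside $\tilde{\A}$: the form domination $h_i\le h$ yields a contraction $c_i\in\A$ with $h_i^{1/2}=c_ih^{1/2}$, membership in $\A$ coming from commutation with $\A'$. The paper instead passes to the standard form on $L^2(\A,\tau)$, views $\xi=\rho^{1/2}$ as a vector, applies the commutant Radon--Nikodym theorem to the domination $\om_{\rho_i^{1/2}}\le\om_\xi$ of vector states on $\A$ to get $b_i\in\A'^+$, uses uniqueness of the GNS representation to correct by a partial isometry $v_i\in\A'$, and finally sets $M_i=Jv_ib_i^{1/2}Jp$; the identity $\sum_i M_i^*M_i=p$ is then verified by a computation with $J$. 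The trade-off is clear: the paper's route keeps all operator inequalities at the level of bounded vector-state comparisons, outsourcing the analysis to Radon--Nikodym and GNS uniqueness, at the cost of invoking the standard form and the modular conjugation; your route is representation-free and more self-contained, but places the analytic burden on the unbounded-operator bookkeeping you flag --- chiefly that $h-h_i\ge0$ in $\tilde{\A}$ gives $\operatorname{dom}(h^{1/2})\subseteq\operatorname{dom}(h_i^{1/2})$ with $\nor{h_i^{1/2}\xi}\le\nor{h^{1/2}\xi}$, and that $c_ih^{1/2}=h_i^{1/2}$ holds as an identity in $\tilde{\A}$ (both standard for $\tau$-measurable operators, using the common cores $eH$ with $\tau(1-e)$ small, or the bounded approximants $h_i^{1/2}(h+\ep)^{-1/2}$). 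Your normalisation step, cancelling $h^{1/2}$ against the dense range rather than naively, is sound and mirrors the role of the projection $p$ in the paper.
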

\begin{proof}
  We may assume that $\A$ is standardly represented
  on~$H=L^2(\A,\tau)$, and identify $\A_*$ with $L^1(\A,\tau)$. Let $\B=\A'$. Since $\rho$ is a positive element of
  $L^1(\A,\tau)$, the (positive, densely defined) operator
  $\xi:=\rho^{1/2}$ is an element of $H=L^2(\A,\tau)$. Let $p$
  and $p'$ be the orthogonal projections onto $\overline{\B\xi}$ and
  $\overline{\A\xi}$, respectively, and note that
  \[ p\in \A,\quad p'\in \B,\quad Jp'J=p\qand JpJ=p'\] (see
  \cite[Section IX.1]{t2};
  for the last two equalities, $J\xi=\xi^*=\xi$ so
  $JpJa\xi=Jp(JaJ\xi)=J(JaJ\xi)=a\xi$, so $p'\le JpJ$ and similarly
  $p\le Jp'J$.)

  For $1\le i\le n$, we have $\rho_i\leq\rho$, that is,
  $\om_{\rho_i^{1/2}}\leq \om_\xi$.  By the Radon--Nikodym theorem
  (see~\cite[Proposition~7.3.5]{kr2} and its proof) there
  exists $b_i\in \cl B^+$ such that
  \[\om_{\rho_i^{1/2}}(a) = \ip{a b_i\xi}{\xi} = \ip{a b_i^{1/2}\xi}{b_i^{1/2}\xi} = \om_{b_i^{1/2} \xi}(a),\quad a\in \cl{A}.\]
  By the uniqueness of the GNS representation, there exists a partial
  isometry $v_i\in \cl B$ such that for $c_i:=v_ib_i^{1/2}\in \B$, we have
  \begin{equation}\label{eq_cixi}
  c_i \xi=\rho_i^{1/2},\quad 1\le i\le n.
  \end{equation}
  Consider $M_1,\dots,M_{n+1}\in \A$, given by
  \[ M_i:=Jc_iJp,\quad 1\le i\le n,\quad M_{n+1}:=1-p.\]
  For $1\le i\le n$, using \eqref{eq_cixi} we have
  $$M_i\xi=Jc_iJ\xi=\xi c_i^*=(c_i\xi)^*=(\rho_i^{1/2})^*=\rho_i^{1/2},$$
  so \[ M_i \rho M_i^*=(M_i\xi)(M_i\xi)^*=\rho_i,\quad 1\le i\le n.\]
  Similarly, $M_{n+1}\xi=(1-p)\xi=0$, so $M_{n+1}\rho M_{n+1}^*=0$.
  For $a,b\in \A$, we have
  $p'a\xi=a\xi$ and $p'b\xi=b\xi$; hence, for $1\le i\le n$,
  \begin{align*}
    \ip{JM_i^*M_iJa\xi}{b\xi}&=\ip{JpJc_i^*c_iJpJa\xi}{b\xi}= \ip{c_ip'a\xi}{c_ip'b\xi}
    \\&=\ip{c_ia\xi}{c_ib\xi}=\ip{ac_i\xi}{bc_i\xi}=\ip{a\rho_i^{1/2}}{b\rho_i^{1/2}}
    \\&=\tau(\rho_i^{1/2}b^*a\rho_i^{1/2})=\tau(b^*a\rho_i).
  \end{align*}
  Thus, the operator $S:=\sum_{i=1}^n M_i^*M_i$ satisfies $ \ip{JSJ a\xi}{b\xi}=\tau(b^*a\rho)=\ip{a\xi}{b\xi}$.
  Hence, $p'JSJp'=p'$, so
  $$S = pSp = Jp'JSJp'J = Jp'J = p,$$ and
  \[ \sum_{i=1}^{n+1}M_i^*M_i=S+M_{n+1}^*M_{n+1}=p+(1-p)=1.\qedhere\]
\end{proof}

The version of the following lemma for the case where $\ep = 0$ is well-known. We require the
following approximate extension.



\begin{lemma}\label{l_pure}
Let $H$ be a Hilbert space, $\vphi\in H_1$ and $(\omega_k)_{k=1}^\infty$ be a sequence in $\cl T(H)^+$ such that $\sum_{k=1}^\infty\om_k$ converges weakly to an element in the closed unit ball of $\cl T(H)$. Set $\alpha_k=\langle \phi\phi^*,\om_k\rangle$, $k\in \bb{N}$.
If $\epsilon > 0$ and
\begin{equation}\label{l_1}\nor{\om_\vphi - \sum_{k=1}^\infty\om_k}<\ep,\end{equation} then
$\sum_{k=1}^\infty\nor{\om_k-\alpha_k\om_\vphi} < 2\sqrt{\ep}+\ep$.
\end{lemma}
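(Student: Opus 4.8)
The plan is to compress each $\om_k$ against the rank-one projection $p:=\vphi\vphi^*$, which is precisely the density operator in $\cl T(H)$ representing $\om_\vphi$, and to estimate the three resulting ``corners'' separately. Writing $p^\perp:=1-p$ and $\Om:=\sum_{k}\om_k$ for the weak limit --- a positive element of the closed unit ball of $\cl T(H)$, positivity being inherited from the increasing sequence of positive partial sums --- I would first record that $\alpha_k=\ip{\om_k\vphi}{\vphi}=\tr(p\,\om_k)\ge 0$ and that $p\om_k p=\ip{\om_k\vphi}{\vphi}\,p=\alpha_k p=\alpha_k\om_\vphi$. Consequently
\[
\om_k-\alpha_k\om_\vphi = p\om_k p^\perp + p^\perp\om_k p + p^\perp\om_k p^\perp,
\]
so by the triangle inequality for $\nor{\cdot}_1$ it suffices to bound the three corner-sums $\sum_k\nor{p\om_k p^\perp}_1$, $\sum_k\nor{p^\perp\om_k p}_1$ and $\sum_k\nor{p^\perp\om_k p^\perp}_1$ individually.

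The diagonal corner is immediate. Each $p^\perp\om_k p^\perp$ is positive, so $\nor{p^\perp\om_k p^\perp}_1=\tr(p^\perp\om_k)=:\beta_k$, and since pairing with $p^\perp\in\BH=\cl T(H)^*$ is weakly continuous, $\sum_k\beta_k=\tr(p^\perp\Om)$. As $\tr(p^\perp\vphi\vphi^*)=0$ and $\nor{\om_\vphi-\Om}_1<\ep$, this gives $\sum_k\beta_k=\tr\!\big(p^\perp(\Om-\vphi\vphi^*)\big)<\ep$. The same weak-continuity argument yields $\sum_k\alpha_k=\tr(p\,\Om)\le\tr(\Om)\le 1$, using $0\le p\le 1$ and $\Om\ge 0$.

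The main obstacle is controlling the off-diagonal corners, and here I would exploit the positivity of $\om_k$ through the factorisation $\om_k=\om_k^{1/2}\om_k^{1/2}$. Writing $p\om_k p^\perp=(p\om_k^{1/2})(\om_k^{1/2}p^\perp)$ and applying the Hölder inequality $\nor{AB}_1\le\nor{A}_2\nor{B}_2$ for Hilbert--Schmidt factors, I would compute $\nor{p\om_k^{1/2}}_2^2=\tr(p\,\om_k)=\alpha_k$ and $\nor{\om_k^{1/2}p^\perp}_2^2=\tr(p^\perp\om_k)=\beta_k$, whence $\nor{p\om_k p^\perp}_1\le\sqrt{\alpha_k\beta_k}$. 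A second application of Cauchy--Schwarz, now over the index $k$, then gives
\[
\sum_k\nor{p\om_k p^\perp}_1\le\Big(\sum_k\alpha_k\Big)^{1/2}\Big(\sum_k\beta_k\Big)^{1/2}\le 1\cdot\sqrt{\ep}=\sqrt{\ep},
\]
and the adjoint corner $\sum_k\nor{p^\perp\om_k p}_1$ satisfies the identical bound since $\nor{A^*}_1=\nor{A}_1$. Adding the three corner estimates produces $\sum_k\nor{\om_k-\alpha_k\om_\vphi}_1<\sqrt{\ep}+\sqrt{\ep}+\ep=2\sqrt{\ep}+\ep$, as claimed; convergence of the series is automatic since its partial sums are bounded by this quantity. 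The crux is thus the double Cauchy--Schwarz for the cross terms, which converts the single small quantity $\tr(p^\perp\Om)<\ep$ governing the ``off-diagonal mass'' into the $\sqrt{\ep}$ bound on each off-diagonal corner.
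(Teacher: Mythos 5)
Your proposal is correct and follows essentially the same route as the paper: the same four-corner decomposition with respect to $p=\vphi\vphi^*$, the same bound $\sum_k\nor{p^\perp\om_kp^\perp}_1<\ep$ from the hypothesis, and the same estimate $\nor{p\om_kp^\perp}_1\le\sqrt{\alpha_k\beta_k}$ followed by Cauchy--Schwarz over $k$. The only (cosmetic) difference is that you obtain the cross-term bound via H\"older for Hilbert--Schmidt factors $\om_k^{1/2}$, whereas the paper uses the Cauchy--Schwarz inequality for the positive functional $\om_k$ applied to $p^\perp Tp$.
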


\begin{proof}
Let $\tr$ denote the canonical trace on $\cl T(H)$ and, for simplicity, write $p = \vphi\vphi^*$ and $p^{\perp} = 1 - p$.
Observe that $p^\perp \om_\vphi p^\perp =0$. Thus, by \eqref{l_1},
\begin{align}\label{eq_tlines}
\sum_{k=1}^\infty\nor{p^\perp\om_k p^\perp}&=\sum_{k=1}^\infty\tr(p^{\perp}\om_kp^{\perp})=\tr\bigg(p^{\perp}\bigg(\sum_{k=1}^\infty\om_k\bigg)p^{\perp}\bigg)\\
&=\nor{p^{\perp}\bigg(\sum_{k=1}^\infty\om_k\bigg)p^{\perp}} < \ep. \nonumber
\end{align}
By the Cauchy--Schwarz inequality, for any $T\in\BH$, we have
\begin{align*}
\left|p\omega_k p^{\perp}(T)\right|
= \left|\om_k(p^{\perp}Tp)\right|
& \leq
\om_k\left(p^{\perp}TT^*p^{\perp}\right)^{1/2}\om_k(p)^{1/2}\\
&\leq \nor{T}\nor{p^{\perp}\om_kp^{\perp}}^{1/2}\om_k(p)^{1/2}.\end{align*}
Hence, $\nor{p\om_kp^\perp}\leq\nor{p^{\perp}\om_kp^{\perp}}^{1/2}\om_k(p)^{1/2}$. Applying the
Cauchy--Schwarz inequality once again and using \eqref{eq_tlines}, we obtain
\begin{align*}
\sum_{k=1}^\infty\nor{p\om_kp^\perp}&\leq\sum_{k=1}^\infty \om_k(p)^{1/2} \nor{p^{\perp}\om_kp^{\perp}}^{1/2}\\
&\leq \bigg(\sum_{k=1}^\infty\om_k(p)\bigg)^{1/2} \bigg(\sum_{k=1}^\infty\nor{p^{\perp}\om_kp^{\perp}}\bigg)^{1/2}
< \sqrt{\ep}.
\end{align*}
Since $\nor{p^\perp \om_k p }=\nor{(p\om_k p^\perp)^*}=\nor{p\om_k p^\perp}$, we have $\sum_{k=1}^\infty \nor{p^\perp \om_k p }< \sqrt{\ep}$. Decomposing $\om_k=p\om_kp+p\om_kp^{\perp}+p^\perp\om_k p+p^\perp\om_k p^\perp$, and noting $p\om_k p=\om_k(p)\om_\vphi=\alpha_k \om_\vphi$,
we see that
\begin{align*}
\sum_{k=1}^\infty\nor{\om_k-
\alpha_k\om_\vphi}&\leq\sum_{k=1}^\infty \Big( \nor{p\om_kp^\perp}+\nor{p^\perp \om_k p }+\nor{p^\perp\om_k p^\perp}\Big) < 2\sqrt{\ep}+\ep.\qedhere
\end{align*}
\end{proof}

\goodbreak
We are now in a position to prove the main result of the paper.
It provides a version of Nielsen's theorem for bipartite systems without any explicit (spatial) tensor product structure.

\begin{theorem}\label{th_maj}
Let $\cl A\subseteq\BH$ be a semi-finite factor on a separable Hilbert space~$H$.
For unit vectors $\psi,\vphi\in H$, the following are equivalent:

(i) \ $\psi$ is approximately convertible to $\vphi$ via $\locc(\cl A)$;

(ii) $\rho_{\psi}\prec \rho_{\vphi}$.
\end{theorem}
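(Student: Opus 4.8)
The plan is to reduce to the standard-form case and then prove the two implications separately, using Corollary~\ref{c_oneway} to work throughout with one-way right LOCC maps. For the reduction I would amplify exactly as in the proof of Theorem~\ref{t:lopopescu}: replace $\A$ by the properly infinite $\sigma$-finite factor $\widetilde{\A}=\A\oten\B(K)\oten 1_K$ and $\psi,\vphi$ by $\psi\ten\xi\ten\xi,\vphi\ten\xi\ten\xi$. Since the reduced densities $\rho_{\psi\ten\xi\ten\xi}$ and $\rho_{\vphi\ten\xi\ten\xi}$ have the same singular value functions as $\rho_\psi$ and $\rho_\vphi$, condition (ii) is unchanged, while Lemma~\ref{l:st} transports a conversion in $\locc^r(\widetilde{\A})$ back to one in $\locc^r(\A)$, so (i) is unchanged as well; as $\widetilde{\A}$ is unitarily equivalent to its standard form, we may assume $\A$ acts standardly on $H=L^2(\A,\tau)$. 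The computation I keep in reserve is that for $\Theta=\sum_k\Phi_k\circ\Ad(a_k^*)\in\locc^r(\A)$ (as in Proposition~\ref{p_comp}, with $a_k\in\A$ and $\sum_k a_k^*a_k=1$) Bob's channels $\Phi_k$ fix $\A$ pointwise, so that $\Theta_*(\om_\psi)=\sum_k\Phi_{k*}(\om_{a_k\psi})$ and each branch restricts on $\A$ to the functional with density $a_k\rho_\psi a_k^*$ (using $\rho_{a\psi}=a\rho_\psi a^*$).

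For (i)$\Rightarrow$(ii), suppose $\nor{\Theta_*(\om_\psi)-\om_\vphi}<\ep$ with $\Theta\in\locc^r(\A)$. The branches $\nu_k=\Phi_{k*}(\om_{a_k\psi})$ are positive and sum to the state $\Theta_*(\om_\psi)$, so Lemma~\ref{l_pure} with $p=\vphi\vphi^*$ gives $\sum_k\nor{\nu_k-\alpha_k\om_\vphi}<2\sqrt{\ep}+\ep$, where $\alpha_k=\langle\vphi\vphi^*,\nu_k\rangle$; restricting to $\A$ and reading off densities yields $\sum_k\nor{a_k\rho_\psi a_k^*-\alpha_k\rho_\vphi}_1<2\sqrt{\ep}+\ep$. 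Taking the polar decomposition $a_k\rho_\psi^{1/2}=(a_k\rho_\psi a_k^*)^{1/2}w_k$ with $w_k\in\A$ a partial isometry, one has the exact identity $\rho_\psi^{1/2}a_k^*a_k\rho_\psi^{1/2}=w_k^*(a_k\rho_\psi a_k^*)w_k$, and summing against $\sum_k a_k^*a_k=1$ gives $\rho_\psi=\sum_k w_k^*(a_k\rho_\psi a_k^*)w_k$, which lies within $2\sqrt{\ep}+\ep$ of $\sigma:=\sum_k\alpha_k w_k^*\rho_\vphi w_k$ in $\nor{\cdot}_1$. Because $\mu_t(w_k^*\rho_\vphi w_k)\le\mu_t(\rho_\vphi)$ for a contraction \cite[Lemma 2.5]{fk} and the Ky Fan functionals $x\mapsto\int_0^s\mu_t(x)\,dt$ are subadditive and dominated by $\nor{\cdot}_1$, I obtain $\int_0^s\mu_t(\rho_\psi)\,dt\le(\sum_k\alpha_k)\int_0^s\mu_t(\rho_\vphi)\,dt+2\sqrt{\ep}+\ep$ for every $s$; as $\ep\to0$ we have $\sum_k\alpha_k\to1$, and with $\tau(\rho_\psi)=\tau(\rho_\vphi)=1$ this is precisely $\rho_\psi\prec\rho_\vphi$.

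For (ii)$\Rightarrow$(i), I would reverse the construction. Given $\rho_\psi\prec\rho_\vphi$, Hiai's majorisation theory \cite{h} puts $\rho_\psi$ in the $\nor{\cdot}_1$-closed convex hull of the unitary orbit of $\rho_\vphi$, so for $\delta>0$ there are unitaries $u_k\in\A$ and weights $p_k>0$ with $\sum_k p_k=1$ and $\nor{\rho_\psi-\widetilde{\sigma}}_1<\delta$, where $\widetilde{\sigma}=\sum_k\sigma_k$ and $\sigma_k=p_ku_k\rho_\vphi u_k^*$ (finitely many $k$). Applying Lemma~\ref{l_M} to the decomposition $\widetilde{\sigma}=\sum_k\sigma_k$ yields $M_1,\dots,M_{n+1}\in\A$ with $\sum_iM_i^*M_i=1$ and $M_k\widetilde{\sigma}M_k^*=\sigma_k$; replacing $M_k$ by $u_k^*M_k$ (which preserves $\sum_iM_i^*M_i=1$) we arrange $M_k\widetilde{\sigma}M_k^*=p_k\rho_\vphi$, and the inequality $\sum_k\nor{M_k\rho_\psi M_k^*-p_k\rho_\vphi}_1\le\nor{\rho_\psi-\widetilde{\sigma}}_1<\delta$ follows from $\sum_iM_i^*M_i=1$. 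Thus each branch vector $M_k\psi$ has $\A$-reduced density $\nor{\cdot}_1$-close to $p_k\rho_\vphi$, the reduced density of $\sqrt{p_k}\vphi$; via Lemma~\ref{l_Schmidt} their $\A'$-singular values nearly agree, and the standard-form alignment underlying Proposition~\ref{p_lopopescu} --- namely \cite[Theorem 3.4(1)]{h} together with the continuity of the Stinespring dilation \cite{ksw} --- supplies unitaries $u_k'\in\B$ with $\nor{u_k'M_k\psi-\sqrt{p_k}\vphi}$ small. Taking $\Phi_k=\Ad((u_k')^*)\in\ucp^\sigma_\A(\BH)$, the one-way right LOCC map $\Theta=\sum_k\Phi_k\circ\Ad(M_k^*)$ satisfies $\Theta_*(\om_\psi)=\sum_k\om_{u_k'M_k\psi}\approx\sum_kp_k\om_\vphi=\om_\vphi$, and letting $\delta$ and the alignment errors tend to $0$ gives approximate convertibility.

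I expect the main obstacle to lie in (ii)$\Rightarrow$(i), specifically in the per-branch alignment: one must pass from the mere $\nor{\cdot}_1$-closeness (not equality) of the $\A$-reduced densities $M_k\rho_\psi M_k^*$ and $p_k\rho_\vphi$ to a genuine unitary $u_k'\in\B$ realising $u_k'M_k\psi\approx\sqrt{p_k}\vphi$, since \cite[Theorem 3.4(1)]{h} strictly compares equal singular values while Bob's operations cannot alter the $\A$-reduced density at all; this mismatch must be absorbed through the Stinespring-continuity estimate \cite{ksw} and then summed uniformly over the branches while keeping $\sum_iM_i^*M_i=1$ and coordinating the two approximation scales $\delta$ and $\ep$. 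By contrast, direction (i)$\Rightarrow$(ii) is a comparatively clean consequence of Lemma~\ref{l_pure}, the polar decomposition, and the subadditivity of the Ky Fan functionals.
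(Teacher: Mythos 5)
Your direction (i)$\Rightarrow$(ii) is essentially the paper's argument: reduce to a one-way right LOCC map via Corollary~\ref{c_oneway}, apply Lemma~\ref{l_pure} and restrict to $\A$, use the polar decomposition identity $\rho_\psi^{1/2}a_k^*a_k\rho_\psi^{1/2}=v_k(a_k\rho_\psi a_k^*)v_k^*$, and conclude; the paper cites \cite[Theorem 2.5\,(3)]{h} where you verify the Ky Fan inequalities directly, which is a harmless variant (though note you still need Lemma~\ref{l_dini} and a truncation to justify the norm convergence of $\sum_k\rho_\psi^{1/2}a_k^*a_k\rho_\psi^{1/2}$ before you can sum the estimates).

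The gap is in (ii)$\Rightarrow$(i), precisely at the step you flag as ``the main obstacle'' and then leave unresolved. Two things go wrong with the per-branch alignment as you describe it. First, the operators produced by Stinespring continuity plus uniqueness are \emph{contractions} in $\B$ (products of two intertwining partial isometries), not unitaries: since a unitary $u'\in\B$ preserves $\omega_\xi|_{\A}$ exactly, and $M_k\rho_\psi M_k^*\neq p_k\rho_\varphi$ in general, no unitary $u_k'$ with $u_k'M_k\psi\approx\sqrt{p_k}\varphi$ need exist, and \cite[Theorem 3.4\,(1)]{h} is unavailable because the singular values are only approximately equal. With mere contractions $b_k$, the maps $\Ad(b_k^*)$ are not unital, so your $\Theta$ is not a channel; one must complete each $\Phi_k$ with the defect term $(1-b_k^*b_k)^{1/2}x(1-b_k^*b_k)^{1/2}$ and then control the extra contribution $\sum_k\|(1-b_k^*b_k)^{1/2}M_k\psi\|^2$, which is exactly what Lemma~\ref{l_piso} is for. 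Second, summing the per-branch Stinespring errors gives $\sum_k\sqrt{\delta_k}$ where $\sum_k\delta_k<\delta$; the naive bound is $\sqrt{(n+1)\delta}$, and $n$ is \emph{not} independent of $\delta$ (it comes from the same application of \cite[Theorem 2.5]{h}), so the quantifiers do not close. This can be rescued by a Cauchy--Schwarz argument exploiting $\sum_k\|M_k\psi\|^2=\sum_kp_k=1$, but you do not supply it. The paper avoids both problems at once by never aligning branch by branch: it forms the block-diagonal states $\nu_\psi=\omega_\psi\circ\Psi|_{\A\otimes M_m}$ and $\nu_\varphi=\omega_\varphi\circ\Phi|_{\A\otimes M_m}$, applies the continuity of Stinespring \emph{once} to get a single contraction $U\in\B\otimes1\otimes M_m$ with $\|UV\psi-W\varphi\|<\sqrt{\delta}$, compresses to the diagonal to read off all the $b_i\in\B$ simultaneously with one global error, and only then restores unitality via the defect completion controlled by Lemma~\ref{l_piso}. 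Without this (or the Cauchy--Schwarz repair), your construction of $\Theta$ does not yield the claimed estimate.
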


\begin{proof}
$(i)\Rightarrow(ii)$
Let $\ep > 0$ and $\delta > 0$ be such that
$2(\delta+\sqrt{\delta}) < \tfrac12\ep$. By Corollary~\ref{c_oneway},
there exists a one-way right LOCC map $\Theta$ relative to $\cl A$ such that \begin{equation}\label{eq_nor}\nor{\om_\vphi-\Theta_*(\om_\psi)}<\delta.
\end{equation}
By Proposition~\ref{p_comp}\,(ii), we may write
$$\Theta(x) = \sum_{k=1}^\infty \Phi_k(a_k^*xa_k), \ \ x\in\BH,$$
for some $\Phi_k\in \ucp^\sigma_{\cl A}(\cl B(H))$ and $a_k\in \cl{A}$, $k\in\N$, with
\begin{equation}\label{eq_akak}
\sum_{k=1}^{\infty} a_k^* a_k = 1,
\end{equation}
where the series converge in the weak* topology.
Let
\[ \omega_k=\Phi_{k*}(a_k\omega_\psi a_k^*) = a_k\Phi_{k^*}(\omega_\psi) a_k^*\in \cl T(H)^+;\]
then the series
$\sum_{k=1}^\infty \omega_k$ is weakly convergent to $\Theta_*(\omega_\psi)$.
Let us write
$\alpha_k:=\revduality{\phi\phi^*}{\omega_k}$, $k\in \bb N$.
Since $\revduality{\phi\phi^*}{\omega_\phi}=1$, the bound~\eqref{eq_nor} implies
\begin{equation}\label{eq_alphak}
\sum_{k=1}^\infty \alpha_k=\revduality{\phi\phi^*}{\Theta_*(\omega_\psi)}\in (1-\delta,1].
\end{equation}
By Lemma~\ref{l_pure},
$\sum_{k=1}^\infty\nor{\omega_k -\alpha_k\om_\vphi}<2\sqrt{\delta}+\delta$.
Taking restrictions to $\cl A$, and using the fact that $\Phi_k|_{\cl A}$ coincides with the identity map, we obtain
\begin{equation}\label{eq_no1}
\sum_{k=1}^\infty\nor{a_k\rho_\psi a_k^*-\alpha_k\rho_\vphi}_1 < 2 \sqrt{\delta} + \delta.
\end{equation}

For each $a\in\cl A$, by \eqref{eq_akak} we have
\begin{align*}
\sum_{k=1}^l &\duality{\rho_\psi^{1/2}a_k^*a_k\rho_\psi^{1/2}}{a}
= \sum_{k=1}^l \tau(a\rho_\psi^{1/2}a_k^*a_k\rho_\psi^{1/2})
= \sum_{k=1}^l \tau(a_k^*a_k\rho_\psi^{1/2}a\rho_\psi^{1/2})\\
&= \sum_{k=1}^l \duality{\rho_\psi^{1/2}a\rho_\psi^{1/2}}{a_k^*a_k} \to_{l\to\infty} \duality{\rho_\psi^{1/2}a\rho_\psi^{1/2}}{1}
=\duality{\rho_\psi}{a}.
\end{align*}
Hence, the series $\sum_{k=1}^\infty\rho_\psi^{1/2}a_k^*a_k\rho_\psi^{1/2}$ converges weakly
to $\rho_\psi$.
By Lemma~\ref{l_dini}, the convergence is in norm.  Choose $L\in \bb N$ so that
\begin{equation}\label{eq_norrho}
\nor{\rho_\psi-\sum_{k=1}^L \rho_\psi^{1/2} a_k^*a_k\rho_\psi^{1/2}}_1<\tfrac12\epsilon.
\end{equation}

By the right polar decomposition, there exists a partial isometry
$v_k\in\cl A$ such that $a_k\rho_\psi^{1/2} = (a_k\rho_\psi a_k^*)^{1/2}v_k^*$, $k\in \bb{N}$.
Writing $\alpha_0 = 1 - \sum_{k=1}^\infty\alpha_k$, we have by \eqref{eq_alphak} that $\alpha_0 <\delta$.
Setting $v_0 = 1$ and
using \eqref{eq_no1} and \eqref{eq_norrho}, we see that
\begin{align*}
\nor{\rho_\psi-\sum_{k=0}^L\alpha_kv_k\rho_\vphi v_k^*}_1
  &< \delta+\nor{\sum_{k=1}^L \rho_\psi^{1/2}a_k^*a_k \rho_\psi^{1/2}-\alpha_kv_k\rho_\phi v_k^*}_1+\tfrac12\epsilon\\
  &= \delta+\nor{\sum_{k=1}^L v_k(a_k \rho_\psi a_k^*-\alpha_k\rho_\phi) v_k^*}_1+\tfrac12\epsilon\\
  &\le \delta+\sum_{k=1}^L \nor{a_k\rho_\psi a_k^*-\alpha_k\rho_\phi}_1+\tfrac12\epsilon
    \\ & < 2(\delta+\sqrt \delta)+\tfrac12\epsilon<\ep.
\end{align*}
Since $\ep>0$ was arbitrary, it follows from \cite[Theorem 2.5\,(3)]{h} 
that $\rho_\psi\prec\rho_\vphi$. 

\smallskip

$(ii)\Rightarrow(i)$
Suppose $\rho_\psi \prec \rho_\vphi$, and fix $\ep>0$. Pick $\delta>0$ such that $4\sqrt{\delta}<\ep$.
Since $\A$ is a factor, by \cite[Theorem 2.5]{h},
there exist a family $(u_i)_{i=1}^n$ of unitary operators in $\cl{A}$ and
a probability distribution $(p_i)_{i=1}^n$,
such that, if $\widetilde{\rho_\psi}=\sum_{i=1}^n p_i u_i\rho_\vphi u_i^*$,
then
$\norm{\rho_\psi-\widetilde{\rho_\psi}}_1 < \delta$.
Set $m=n+1$. By
Lemma~\ref{l_M},
there exist
$M_1,\dots,M_m\in \cl A$ with $\sum_{i=1}^m M_i^*M_i=1$, such that
\begin{equation}\label{e_M}
M_i\widetilde{\rho_\psi}M_i^*= p_i\rho_\vphi\text{ for $1\le i\le n$,}\qand M_m\widetilde{\rho_\psi}M_m^*= 0.
\end{equation}

Let $e_1,\dots,e_m$ be the standard basis of $\bb C^m$,
and consider the UCP maps $\Psi,\Phi:\cl B(H\ten \C^m)\rightarrow\cl B(H)$ given by
\[\Psi(T) = \sum_{i=1}^mM_i^*(\id\ten\om_{e_i})(T)M_i\qand \Phi(T)=\sum_{i=1}^n p_i(\id \ten\om_{e_i})(T)\]
for $T\in \cl B(H\ten \C^m)$.
We have that
\begin{equation}\label{e_pre}
\Psi_*(\rho)=\sum_{i=1}^m M_i\rho M_i^*\ten e_ie_i^*\qand
\Phi_*(\rho)=\sum_{i=1}^n p_i\rho\ten e_ie_i^*,
\end{equation}
for $\rho\in \cl T(H)$. Letting $V,W: H\rightarrow H\ten\C^m\ten\C^m$ be the isometries given by
\[V\eta = \sum_{i=1}^mM_i\eta\ten e_i\ten e_i\qand W\eta=\sum_{i=1}^n \sqrt p_i\eta\ten e_i\ten e_i,\quad\eta\in H,\]
we have Stinespring representations
\begin{equation}
\label{st_VW}\Psi(T)=V^*(T\ten 1)V
\qand \Phi(T) = W^*(T\ten 1)W,\quad T\in \cl B(H\ten \C^m).
\end{equation}

Consider the states $\nu_\psi,\nu_\phi\colon \cl A\ten M_m\to \bb C$ given by
\[ \nu_\psi=\omega_\psi\circ \Psi|_{\cl A\ten
    M_m}\qand\nu_\phi=\omega_\phi\circ \Phi|_{\cl A\ten M_m}.\]
By~\eqref{e_pre} and~\eqref{e_M}, we have
\begin{align*}
\nor{\nu_\psi-\nu_\phi}_{cb}&=\nor{\nu_\psi-\nu_\phi}\\
&=\nor{(\Psi|_{\cl A\ten M_m})_*(\rho_\psi)-(\Phi|_{\cl A\ten M_m})_*(\rho_\vphi)}\\
&=\nor{\sum_{i=1}^mM_i\rho_\psi M_i^* \ten e_ie_i^*- M_i\widetilde{\rho_\psi} M_i^*\ten e_ie_i^*}\\
&=\nor{(\Psi|_{\cl A\ten M_m})_*(\rho_\psi-\widetilde{\rho_\psi})} < \delta.
\end{align*}
Let
$\theta\colon \cl A\otimes M_m\to \cl B (H\otimes \bb C^m\otimes
\bb C^m)$ be the $*$-homomorphism given by $\theta(X)=X\otimes 1$, $X\in \cl A\otimes M_m$.
By \eqref{st_VW}, the maps $\nu_\psi$ and $\nu_\phi$ have Stinespring
representations
\[ \nu_\psi=\omega_{V\psi}\circ \theta\qand \nu_\phi=\omega_{W\phi}\circ \theta.\]
By the continuity of the Stinespring representation \cite[Theorem 1]{ksw} there exist a Hilbert space $K$, a $*$-homomorphism $\pi:\cl A\ten M_m\rightarrow \cl B(K)$, and vectors $\eta_1,\eta_2\in K$ yielding Stinespring representations
\[
\nu_\psi=\om_{\eta_1}\circ \pi\qand
\nu_\phi=\om_{\eta_2}\circ\pi
\] with
\begin{equation}\label{eq_sqrd}
\nor{\eta_1-\eta_2} < \sqrt{\delta}.
\end{equation}
By the uniqueness of Stinespring representations,
there exist partial isometries $U_1:H\ten\C^m\ten\C^m\rightarrow K$ and $U_2:K\rightarrow H\ten\C^m\ten\C^m$
satisfying $U_1V\psi=\eta_1$, $U_2\eta_2=W\vphi$,
\[U_1(X\ten 1)=\pi(X)U_1 \quad \mbox{ and } \quad U_2\pi(X) = (X\ten 1)U_2,\quad X\in \cl A\ten M_m.\]
Let $\cl B=\cl A'$. The preceding relations imply that the contraction $U:=U_2U_1$ satisfies
\[ U\in (\cl A\ten M_m\ten 1)'=\cl A'\ten 1\ten M_m=\cl B\ten 1\ten M_m;\]
moreover, by \eqref{eq_sqrd},
\begin{equation}\label{eq_uvw}
\nor{UV\psi-W\vphi} < \sqrt{\delta}.
\end{equation}
Since $U \in \cl B\ten 1\ten M_m$, 
we have $U = \sum_{k,l=1}^mb_{kl}\otimes 1\otimes e_ke_l^*$ for some $b_{kl}\in\cl B$.
Set $b_i = b_{ii}$, $1\leq i\leq m$.
Then $b_i$ is a contraction in $\cl B$. Let $\Phi_i\in\ucp_{\cl A}^\sigma(\BH)$ be the channel given by
$$\Phi_i(x) = b_i^*xb_i + (1 - b_i^*b_i)^{1/2}x(1 - b_i^*b_i)^{1/2}, \ \ \ x\in\BH,$$
and define $\Theta\in\locc(\cl A)$ by
$$\Theta(x) = \sum_{i=1}^m \Phi_i(M_i^*x M_i), \ \ \ x\in\BH.$$

We claim that $\nor{\Theta_*(\om_\psi)-\om_\vphi} < \ep$, which will complete the proof. To see this, let $P : \C^m\ten\C^m\rightarrow\C^m\ten\C^m$ denote the orthogonal projection onto $\spn\{e_i\ten e_i\mid 1\leq i\leq m\}$, and consider the contraction
\[\tilde{U} = (1\ten P)U \in \cl B(H\otimes \bb C^m\otimes \bb C^m).\]
A calculation shows that
\begin{equation}\label{tUV}
\tilde U V\psi = \sum_{i=1}^m b_i M_i\psi\otimes e_i\otimes e_i.
\end{equation}
Since
$W\phi$ lies in the range of $1\ten P$, the bound~\eqref{eq_uvw} implies
\begin{equation}\label{eq_wdelta}
\nor{\tilde UV\psi-W\vphi} = \nor{(1\ten P)\left(UV\psi-W\vphi\right)} < \sqrt{\delta},
\end{equation}
and so
\begin{align}\label{eq_anewo}
  \nor{\omega_{\tilde U V\psi}-\omega_{W\phi}}\le 2\nor{\tilde U V\psi-W\phi}< 2 \sqrt{\delta}.
\end{align}
Observe that for $x\in \BH$, equation~\eqref{tUV} yields
\begin{align*}
  \duality{\left(\sum_{i=1}^m\omega_{b_iM_i\psi}\right)-\omega_\phi}{x}=\duality{\omega_{\tilde U V\psi}-\omega_{W\phi}}{x\ten1\ten1}
\end{align*}
so, in particular, using \eqref{eq_anewo}, we have
\begin{align}\label{eq_mipsi}
  \nor{\sum_{i=1}^m b_iM_i\omega_\psi M_i^*b_i^*-\omega_\phi} &= \nor{\left(\sum_{i=1}^m \omega_{b_iM_i\psi}\right)-\omega_{\phi}}
  \\&\leq \nor{\omega_{\tilde UV\psi}-\omega_{W\phi}}<2\sqrt\delta. \nonumber
\end{align}

Since $V$ and $W$ are isometries, we have $\|V\psi\|=1=\|W\phi\|$.
Using \eqref{eq_wdelta}, we thus have
$$\|V\psi\| - \|\tilde{U}V\psi\| = \|W\vphi\| - \|\tilde{U}V\psi\| < \sqrt{\delta}.$$
By Lemma~\ref{l_piso},
\begin{align*}
\sum_{i=1}^m\nor{(1-b_i^*b_i)^{1/2}M_i\psi}^2 &=
\sum_{i=1}^m \|M_i\psi\|^2-\|b_iM_i\psi\|^2
  =\|V\psi\|^2-\|\tilde UV\psi\|^2\\
&=\nor{(1-\tilde{U}^*\tilde{U})^{1/2}V\psi}^2 < 2\sqrt{\delta}.
\end{align*}
Thus, using \eqref{eq_mipsi}, we have
\begin{align*}
& \nor{\Theta_*(\om_\psi)-\om_\vphi}\\
&= \nor{\sum_{i=1}^mb_iM_i\om_\psi M_i^*b_i^*+(1-b_i^*b_i)^{1/2}M_i\om_\psi M_i^*(1-b_i^*b_i)^{1/2}-\om_\vphi}\\
&\leq \nor{\sum_{i=1}^mb_iM_i\om_\psi M_i^*b_i^*-\om_\vphi}
+\sum_{i=1}^m\nor{(1-b_i^*b_i)^{1/2}M_i\om_\psi M_i^*(1-b_i^*b_i)^{1/2}}\\
&<2\sqrt\delta+\sum_{i=1}^m\nor{(1-b_i^*b_i)^{1/2}M_i\psi}^2
< 4\sqrt{\delta} < \ep.\qedhere
\end{align*}
\end{proof}


\begin{remark}
The structure theory of type $\mathrm{III}_1$-factors renders state convertibility trivial in that setting. Indeed, if $\cl A$ is a factor of type $\mathrm{III}_1$ with separable predual, then
by~\cite[Theorem XII.5.12]{t2}, for all normal states $\om_1,\om_2$ on $\cl A$, we have
$$\inf\{\norm{u^*\om_1 u - \om_2}\mid u\in \cl{U}(\cl A)\}=0.$$
Hence, given states $\psi,\vphi$ in the representation space $H$ of $\cl A$, for every $\ep>0$ there exists a unitary $u\in\cl A$ such that
$\norm{u^*\om_\psi|_{\cl A} u - \om_\vphi|_{\cl A}}<\ep$. Appealing to continuity and uniqueness of Stinespring representations (as in Proposition~\ref{p_lopopescu}) one can build a channel $\Theta\in\locc(\cl A)$ for which
\[\norm{\Theta_*(\om_\psi)-\om_\vphi} < 2\sqrt{\ep}.\]
Hence, $\psi$ is approximately convertible to $\vphi$ via $\locc(\cl A)$ and vice-versa. The problem of convertibility for general type $\mathrm{III}$ factors remains an interesting open question.
\end{remark}

\begin{remark}
It is natural to ask if the statement of Theorem~\ref{th_maj} holds in the case of general semi-finite von Neumann algebras.
Such an extension would require a treatment of integral decompositions of normal completely positive maps, and
is left for a further study.
Here we only include an illustration involving a typical non-factor case.
Let $\cl D$ be a maximal abelian selfadjoint algebra with separable predual, acting on a Hilbert space $H$.
We may assume, without loss of generality, that $(X,\mu)$ is a probability measure space such that $H = L^2(X,\mu)$
and $\cl D = \{M_a : a\in L^{\infty}(X,\mu)\}$, where, for $a\in L^{\infty}(X,\mu)$, we have let $M_a\in \cl B(H)$ be the operator
of multiplication by $a$. We equip $\cl D$ with the trace $\tau$ given by $\tau(M_a) = \int_{X} a\, d\mu$.
Note that, since $\cl D = \cl D'$, we have $\locc^r(\cl D)=\locc^l(\cl D)=\locc(\cl D)$, and these sets consist of all unital positive Schur multipliers relative to $(X,\mu)$, that is, the maps $\Phi : \cl B(H)\to \cl B(H)$ of the form
\begin{equation}\label{eq_schur}
\Phi(T) = \sum_{i=1}^{\infty} M_{a_i}^* T M_{a_i}, \ \ \ T\in \cl B(H),
\end{equation}
where $a_i\in L^\infty(X,\mu)$, $i\in \bb N$ and
$$\sum_{i=1}^{\infty} |a_i(s)|^2 = 1 \ \ \mbox{ for almost all } s\in X.$$
Let $\psi,\nph\in H$. We claim that the following are equivalent:
\begin{itemize}
\item[(i)] there exists $\Phi\in \locc(\cl D)$ such that $\Phi_*(\om_\psi) = \om_\nph$;

\item[(ii)] $\psi$ is approximately convertible to $\nph$ via $\LOCC(\D)$;

\item[(iii)] $|\psi| = |\nph|$ almost everywhere.
\end{itemize}

Indeed, the implication (i)$\Rightarrow$(ii) is trivial.
Assuming (ii), fix $\epsilon > 0$ and let
$\Phi\in \locc(\cl D)$ be such that $\|\omega_{\nph} - \omega_{\psi} \circ \Phi\| < \epsilon$.
Writing $\Phi$ in the form \eqref{eq_schur}, we have
\begin{eqnarray*}
\sup_{\|c\|_{\infty} \leq 1} \left| \int_X c (|\nph|^2 - |\psi|^2) \,d\mu \right|
& = &
\sup_{\|c\|_{\infty} \leq 1} \left| \int_X c (|\nph|^2 - \left(\sum_{i=1}^{\infty}|a_i|^2\right) |\psi|^2) \,d\mu \right|\\
& = &
\sup_{\|c\|_{\infty} \leq 1} |\om_\nph(M_c) - \om_\psi(\Phi(M_c))|\\
& \leq &
\|\om_\nph - \om_\psi \circ \Phi\| < \epsilon.
\end{eqnarray*}
Thus, $\||\nph|^2 - |\psi|^2\|_1 < \epsilon$. Hence $|\nph|^2 = |\psi|^2$ in $L^1(X,\mu)$, and (iii) follows.
Finally, assuming (iii), let $\theta : X\to\bb{C}$ be a unimodular function such that $\nph = \theta \psi$, and
let $\Phi : \cl B(H) \to \cl B(H)$ be the map given by $\Phi(T) = M_{\theta}^* T M_{\theta}$.
Then $\Phi\in \locc(\cl D)$ and $\Phi_*(\om_\psi) = \om_\nph$.
\end{remark}

\section{Trace Vectors and Entanglement in $\TO$-factors}\label{s_ttf}

This section is dedicated to some examples and applications of our convertibility result from Section~\ref{s:main}.
In its first part, we consider a generalisation of maximally entangled vectors to the commuting von Neumann algebra setting,
while in its second part we show that entropy of states, relative to the trace, is an entanglement monotone
in the sense of \cite{v}.

\subsection{Trace vectors}
\begin{definition} Let $\cl A\subseteq\BH$ be a finite factor on a Hilbert space. A unit vector $\psi\in H$ is said to be a \textit{trace vector} for $\cl A$ if $\om_{\psi}|_{\cl A}=\tau$, the unique (normal) tracial state on $\cl A$.
\end{definition}

\begin{remark}\label{r_1A}
  Since $\omega_{\psi}|_{\A}(a)=\tau(\rho_\psi a)$ for $a\in \A$, we
  see that $\psi$ is a trace vector if and only if $\rho_\psi=1_\A$.
\end{remark}

It follows from Nielsen's theorem \cite{nielsen} that the maximally entangled state $\psi=\frac{1}{\sqrt{n}}\sum_{i=1}^ne_n\ten e_n\in\C^n\ten\C^n$ is LOCC-convertible (that is, convertible via $\locc(M_n\otimes 1)$) to any other state $\vphi\in\C^n\ten\C^n$. Notice that $\om_\psi|_{M_n\ten 1}=\frac{1}{n}\tr$, the normalised trace on $M_n$. Hence, $\psi$ is a trace vector for $M_n\ten 1_n\subseteq \cl B(\C^n\ten \C^n)$. The next proposition shows that trace vectors play the role of maximally entangled states relative to $\TO$-factors, and provides additional evidence for viewing maximal entanglement through the lens of tracial states \cite[\S V.A]{keylsw}.



\begin{proposition}\label{II1}
Let $\cl A\subseteq\BH$ be a $\TO$-factor on a separable Hilbert space~$H$.
If $\psi\in H$ is a trace vector for $\cl A$, then $\psi$ is approximately convertible to $\vphi$ via $\locc(\cl A)$ for any $\vphi\in H_1$. Conversely, if there exists a trace vector $\psi_0\in H$ for $\cl A$, and $\psi\in H_1$ is approximately convertible to any $\vphi\in H_1$ via $\locc(\cl A)$,
then $\psi$ is a trace vector for $\cl A$.
\end{proposition}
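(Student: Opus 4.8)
The plan is to reduce the whole statement to the majorisation criterion of Theorem~\ref{th_maj} together with Remark~\ref{r_1A}, which identifies the trace vectors as exactly those $\psi$ with $\rho_\psi=1_{\cl A}$. With these two facts in hand, the proposition becomes the single order-theoretic assertion that, in a $\TO$-factor, the identity $1_{\cl A}$ is the \emph{least} element of the majorisation order $\prec$ among all state densities: $1_{\cl A}\prec\rho$ for every $\rho\in L^1(\cl A,\tau)^+$ with $\tau(\rho)=1$.

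For the forward implication I would compute the relevant singular value functions. Since $\tau(1_{\cl A})=1$, one has $\mu_t(1_{\cl A})=1$ for $t\in(0,1)$ and $\mu_t(1_{\cl A})=0$ for $t\ge 1$; the crucial point, which uses finiteness of the factor, is that the same vanishing $\mu_t(\rho_\vphi)=0$ holds for $t\ge\tau(1_{\cl A})=1$ (take the projection $p=0$ in the definition of $\mu_t$). Thus both singular value functions are supported on $(0,1)$ and each has integral $\tau(\rho_\vphi)=1=\tau(1_{\cl A})$ there. Putting $F(s)=\int_0^s(\mu_t(\rho_\vphi)-\mu_t(1_{\cl A}))\,dt$, one has $F(0)=F(1)=0$, $F$ is constant on $[1,\infty)$, and on $(0,1)$ its derivative $\mu_s(\rho_\vphi)-1$ is non-increasing, so $F$ is concave on $[0,1]$; a concave function vanishing at both endpoints is non-negative, giving $F\ge 0$, i.e. $1_{\cl A}\prec\rho_\vphi$. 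Since $\rho_\psi=1_{\cl A}$ by Remark~\ref{r_1A}, Theorem~\ref{th_maj} then yields approximate convertibility of $\psi$ to any $\vphi\in H_1$.

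For the converse I would apply the hypothesis to the single target $\vphi=\psi_0$. Approximate convertibility of $\psi$ to the trace vector $\psi_0$ gives, via Theorem~\ref{th_maj} and Remark~\ref{r_1A}, the majorisation $\rho_\psi\prec\rho_{\psi_0}=1_{\cl A}$. From this I extract two facts. First, monotonicity of $\mu$ gives $s\,\mu_s(\rho_\psi)\le\int_0^s\mu_t(\rho_\psi)\,dt\le\int_0^s\mu_t(1_{\cl A})\,dt=s$ for $s\in(0,1)$, so $\mu_s(\rho_\psi)\le 1$ and hence $\|\rho_\psi\|_\infty=\lim_{s\to0^+}\mu_s(\rho_\psi)\le 1$; in particular $\rho_\psi\in\cl A$ with $0\le\rho_\psi\le 1_{\cl A}$. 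Second, the definition of $\prec$ carries the trace equality $\tau(\rho_\psi)=\tau(1_{\cl A})$. Combining, $1_{\cl A}-\rho_\psi\ge 0$ and $\tau(1_{\cl A}-\rho_\psi)=0$, so faithfulness of $\tau$ forces $\rho_\psi=1_{\cl A}$; by Remark~\ref{r_1A}, $\psi$ is a trace vector.

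I expect the only real content to be the minimality claim $1_{\cl A}\prec\rho_\vphi$ in the forward direction, and even that rests on the elementary concavity argument above. Its sole non-formal ingredient is the use of finiteness ($\tau(1_{\cl A})<\infty$) to confine both singular value functions to the bounded interval $(0,\tau(1_{\cl A}))$, which is exactly what makes $F$ vanish at both endpoints; the remaining steps are direct invocations of Theorem~\ref{th_maj} and the faithfulness of $\tau$.
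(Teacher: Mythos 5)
Your proof is correct, but it takes a genuinely different route from the paper's in both directions. For the forward implication the paper does not verify the majorisation inequality by hand: it observes that the doubly stochastic map $a\mapsto\tau(a)1_{\cl A}$ sends $\rho_\vphi$ to $1_{\cl A}=\rho_\psi$ and invokes Hiai's theorem that doubly stochastic maps decrease the majorisation order ({\it J.~Math.\ Anal.\ Appl.} 127 (1987), Theorem~4.5). Your concavity argument --- both singular value functions are supported on $(0,1)$ with equal integral, so $F(s)=\int_0^s(\mu_t(\rho_\vphi)-1)\,dt$ is concave with $F(0)=F(1)=0$ and hence non-negative --- is an elementary, self-contained substitute that makes explicit where finiteness of the factor enters. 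For the converse the paper uses approximate convertibility in \emph{both} directions (which it gets by applying the first part to $\psi_0$), deduces spectral equivalence $\rho_\psi\prec 1_{\cl A}$ and $1_{\cl A}\prec\rho_\psi$, and then cites Hiai's Theorem~3.4\,(2) on approximate unitary equivalence to conclude $\|\rho_\psi-1_{\cl A}\|_1<\ep$ for all $\ep$. Your argument is leaner: a single application of Theorem~\ref{th_maj} gives $\rho_\psi\prec 1_{\cl A}$, from which $\mu_s(\rho_\psi)\le 1$ forces $0\le\rho_\psi\le 1_{\cl A}$, and the trace equality plus faithfulness of $\tau$ finishes the job. (Your identification of $\|\rho_\psi\|$ with $\lim_{s\to0^+}\mu_s(\rho_\psi)$ is the standard fact from Fack--Kosaki, Lemma~2.5.) In short, you replace two citations to Hiai's majorisation machinery with direct singular-value computations; the paper's version is shorter given that machinery, yours exposes the underlying order-theoretic fact that $1_{\cl A}$ is the unique minimum of $\prec$ among normalised densities, which is really all the proposition amounts to.
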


\begin{proof}
Suppose that $\psi$ is a trace vector for $\cl A$.
By Remark~\ref{r_1A}, $\rho_\psi = 1_\A$.
The map on $\A$, given by $a\mapsto \tau(a)1_\A$, is doubly stochastic
(i.e., it is positive, normal, unital and trace-preserving) and its extension to $L^1(\A,\tau)$ maps $\rho_\phi$ to $1_\A=\rho_\psi$, since  $\tau(\rho_\phi)=\omega_\phi(1_\A)=(\phi,\phi)=1$. It follows from \cite[Theorem~4.5]{h} that $\rho_\psi\prec\rho_\vphi$, hence, $\psi$ is approximately convertible to $\vphi$ via $\locc(\cl A)$ by Theorem~\ref{th_maj}.



For the converse statement, suppose $\psi_0\in H_1$ is a trace vector for $\cl A$, and that $\psi\in H_1$ is approximately convertible to every $\vphi\in H_1$ via $\locc(\cl A)$. Then $\psi$ is approximately convertible to $\psi_0$ and vice-versa.
By Theorem~\ref{th_maj} and Remark~\ref{r_1A},
$\rho_\psi\prec 1_{\cl A}$ and $1_{\cl A} \prec\rho_\psi$, that is, $\rho_\psi$ and $1_{\cl A}$
are spectrally equivalent in the sense of \cite[\S3]{h}.
By \cite[Theorem 3.4\,(2)]{h}, for every $\ep>0$, there exists a unitary $u\in\cl A$ such that
$$\norm{\rho_\psi - 1_{\cl A}}_1 = \norm{\rho_\psi-u\cdot\rho_{\psi_0}\cdot u^*}_1 < \ep.$$
Since $\ep > 0$ was arbitrary we have $\rho_\psi = 1_{\cl A}$; by Remark~\ref{r_1A}, $\psi$ is a trace vector for $\cl A$.
\end{proof}

Amongst $\TO$-factors, the hyperfinite (i.e., approximately finite dimensional) $\TO$-factor is best suited for applications in mathematical physics. In that context, it typically appears through an infinite tensor product construction, an algebra of canonical commutation/anti-commutation relations, or an irrational rotation algebra. We now present examples of maximally entangled states relative to the hyperfinite $\TO$-factor in each of the three aforementioned manifestations.

\begin{example} This example is based on \cite[\S4.2]{kmsw}. Consider an infinite spin chain consisting of infinitely many qubits arranged on a one-dimensional lattice, say $\Z$. The underlying $C^*$-algebra of the system is given by the infinite tensor product $A=\bigotimes_{\Z} M_2$, that is, the inductive limit of the system $A_F=\bigotimes_{n\in F} M_2$, with canonical inclusion maps, where $F$ ranges through the finite subsets of $\Z$. For $n\in\Z$, let $\psi_n$ be the maximally entangled state on $A_{\{-n,n+1\}}$.
Then $\om=\bigotimes_{n\in\Z}\psi_n \psi_n^*$ defines a state on $A$. Let $\cl A=\pi_\om(A_{(-\infty,0)})''\subseteq\cl B(H_\om)$ be the von Neumann algebra generated by the left half-chain in the cyclic GNS-representation $(H_\om,\pi_\om,\psi_\om)$ of $\om$. Then $\cl B:=\cl A'=\pi_\om(A_{[0,\infty)})''$ is the von Neumann algebra generated by the right half-chain. Both $\cl A$ and $\cl B$ are $\TO$-factors, and by construction it follows that $\psi_\om$ is a trace vector for $\cl A$. Thus, by Proposition~\ref{II1}, $\psi_\om$ is approximately convertible to any state $\vphi\in H_\om$ via $\locc(\cl A)$. Naturally, one may view $\psi_\om$ as a state representing infinitely many pairs of entangled qubits.

\end{example}

\begin{example}\label{eg_fock} Let $K$ be a real Hilbert space and $H=K\oplus iK$ its complexification. Let $\cl F_a(H)$ denote the anti-symmetric Fock space over $H$, given by
$$\cl F_a(H)=\bigoplus_{n\geq 0}\wedge^n H,$$
where $\wedge^n H$ is the anti-symmetric subspace of $H^n:=\bigotimes_{k=1}^n H$ for $n\ge1$ and $\wedge^0:=\C$. For $\psi\in H$, let $a(\psi)^*$ and $a(\psi)$ denote the Fock creation and annihilation operators, namely the bounded~\cite{de} linear maps $\cl F_a(H)\to \cl F_a(H)$
given by
$$a(\psi)^*\vphi=\sqrt{n+1}P_a^{n+1}(\psi\ten\vphi), \ \ a(\psi)\vphi=\sqrt{n}P_a^n(\psi^*\ten \id)\vphi,$$
where $n\geq 1$, $\vphi\in\wedge^n H$ and $P_a^n:H^n\rightarrow\wedge^n H$ is the canonical projection.
Let $S\in \B(\cl F_a(H))$ denote the parity operator defined by $S=\bigoplus_{n\geq0}(-1)^{\ten n}$. Letting $B(\psi):=a(\psi)^*+a(\psi)$ represent the corresponding (self-adjoint) Fermionic field operators, it follows that $\cl A:=\{B(\psi)\mid\psi\in K\}''$ is a $\TO$-factor associated to a real-wave representation of the canonical anti-commutation relations~\cite[\S13]{dg} whose commutant satisfies $\cl B:=\cl A'=\{S B(i\psi)\mid \psi\in K\}''$. 

It is known that the vacuum vector $\Om=(1,0,0,\ldots)\in\cl F_a(H)$ is a quasi-free
trace vector for $\cl A$, with 
\begin{equation}\label{e:vac}(B(\psi)B(\vphi)\Om,\Om)=(\psi,\vphi), \ \ \ \psi,\vphi\in K.\end{equation}
More generally, given an anti-symmetric tensor $c\in H\wedge H$ (seen as a Hilbert-Schmidt operator from $\overline{H}$ to $H$), the Fermionic Gaussian vector associated with $c$ is given by
$$\Om_c=\det(1+c^*c)^{-\frac{1}{4}}e^{-\frac{1}{2}a^*(c)}\Om,$$
where $\det(\cdot)$ is the Fredholm determinant and $a^*(c)$ is the two particle creation operator defined by
$$a^*(c)\psi=\sqrt{(n+2)(n+1)}P_a^{n+2}(c\ten\psi),  \ \ \ \psi\in\wedge^n H, \ n\geq 1.$$
Such vectors occur in the Hartree--Fock--Bogoliubov method for approximating Fermionic systems (see e.g.~\cite[\S4]{dmm}), which is related to the Bardeen--Cooper--Schrieffer theory of superconductivity \cite{bcs}. For every $c$ there exists an orthogonal transformation $O_c$ on $K$, and a unitary $U_c$ on $\cl F_a(H)$ satisfying $\Om_c=U_c^*\Om$ and $U_cB(\psi)U_c^*=B(O_c\psi)$, $\psi\in K$ \cite{dg}. Thus, $\om_{\Om_c}|_{\cl A}=\om_{\Om}\circ\Ad(U_c)$, and it follows from \eqref{e:vac} that $\Om_c$ is also a trace vector for $\cl A$. Thus, by Proposition~\ref{II1}, any of the Fermionic Gaussian vectors $\Om_c$ may be converted into any Fock state $\vphi\in\cl F_a(H)$ by means of local operations and classical communication relative to the real-wave representation $\cl A$ of the CAR\@. In particular, the vectors~$\Omega_c$ display properties of maximal entanglement relative to $\cl A$ and its commutant.

\end{example}

We present one more instance of Proposition~\ref{II1}, based on the example from \cite[\S7]{bkk2}, which in turn was partly motivated by \cite{Faddeev}. This example is a particular realization of the irrational rotation algebra and is related to discretised CCR relations, whose relevance to numerical analysis of quantum systems was advocated by Arveson \cite{arv}.

\begin{example} Suppose Alice and Bob have access to a quantum system represented by the Hilbert space $L^2(\R)$. Let $q$ and $p$ denote the self-adjoint  operators corresponding to position and momentum:
$$q\psi(x)=x\psi(x), \ \ \ p\psi(x)=i\frac{d}{dx}\psi(x),$$
where $\psi$ belongs to a common dense domain for $q$ and $p$. Suppose that Alice can measure periodic functions of position and momentum, with periods $t_q$ and $t_p$, respectively. Such functions are given (respectively) by integer powers of the unitary operators
$$U:=e^{i\omega_q q} \qand  V:=e^{i\omega_p p},$$
where, following \cite{bkk2}, we let $\om_q:=\frac{2\pi}{t_q}$ and $\om_p:=\frac{2\pi}{t_p}$. The operators $U$ and $V$ satisfy
$$UV=e^{2\pi i\theta}VU,$$
where $\theta:=\frac{\om_q\om_p}{2\pi}$. In what follows, we assume that $\om_q\om_p>4\pi$ and that $\theta$ is irrational.

The algebra describing Alice's measurement statistics is the von Neumann subalgebra $\cl A$ of $\cl{B}(L^2(\R))$ generated by $U$ and $V$, and is known to be a type $\TO$-factor. The $C^*$-algebra generated by $U$ and $V$ is known as the irrational rotation algebra corresponding to $\theta$. The von Neumann algebra describing Bob's measurement statistics, $\cl B = \cl A'$, is generated by
$$U':=e^{i\frac{\om_q}{\theta}q}, \qand  V':=e^{i\frac{\om_p}{\theta}p},$$
and is also a type $\TO$-factor.

Let $\psi=\frac{1}{\sqrt{2t_q}}\chi_{[-t_q,t_q]}\in L^2(\R)$. If $x\in[-t_q,t_q]$ and $m\in\Z$ then,
since $\om_p>\frac{4\pi}{\om_q}=2t_q$, it follows that $x+m\om_p\in[-t_q,t_q]$ if and only if $m=0$. Hence, for all $n,m\in\Z$ we have
\begin{align*}(U^nV^m\psi,\psi)&=\frac{1}{2t_q}\int_{\R}e^{in\frac{2\pi}{t_q} x}\chi_{[-t_q,t_q]}(x+m\om_p)\chi_{[-t_q,t_q]}(x) \ dx\\
&=\delta_{m,0}\frac{1}{2t_q}\int_{-t_q}^{t_q}e^{in\frac{2\pi}{t_q}x} \ dx\\
&=\delta_{m,0}\delta_{n,0}1.\end{align*}
By \cite[Corollary VI.1.2]{d}, $\om_\psi|_{\cl A}$ is the unique normal tracial state $\tau$ on $\cl A$. By Proposition~\ref{II1}, we have that $\psi$ is approximately convertible to any unit vector $\vphi\in L^2(\R)$ via $\locc(\cl A)$.

One can think of $\psi$ as representing the state of a particle whose position is uniformly distributed over the interval $[-t_q,t_q]$. This uniformity is playing the role of maximal entanglement relative to the bipartite system $(\cl A, \cl B)$.
\end{example}

\subsection{Entanglement monotones}

The practical importance of quantifying the degree of entanglement present in a given state cannot be overestimated. In the standard finite-dimensional tensor product framework, this quantification is studied through notions of entanglement measures. Since entanglement at its very core is a form of non-local quantum correlation, any reasonable entanglement measure ought to be monotonic with respect to local operations and classical communication. The term entanglement monotone has since emerged for such a measure, and it was argued by Vidal~\cite{v} that monotonicity under LOCC is the \textit{only} natural requirement for measures of entanglement. As an application of our main result, we show that the entropy of the singular value distribution satisfies this requirement for pure states relative to $\TO$-factors, thus yielding an entanglement monotone.

Let $\cl A$ be a $\TO$-factor with unique tracial state $\tau$. Given a normal state $\rho\in\cl S(\cl A)$, we define the \textit{entropy of $\rho$ relative to $\tau$} by
$$H_\tau(\rho):=H(\mu(\rho))=-\int_0^1\mu_t(\rho)\log(\mu_t(\rho)) \ dt.$$
By splitting the entropy function $\eta(x)=-x\log(x)$ into $\chi_{[0,1]}\eta+\chi_{(1,\infty)}\eta$, and applying \cite[Remark 3.3]{fk} to the non-negative Borel functions $\chi_{[0,1]}\eta$ and $-\chi_{(1,\infty)}\eta$, it follows that
$$H_\tau(\rho)=\tau(\eta(\rho))=-\tau(\rho\log(\rho))=-S(\rho,\tau),$$
whenever $|H_\tau(\rho)|<\infty$, where $S(\cdot,\cdot)$ is the relative entropy between normal states of $\cl A$ (see e.g.~\cite[\S5]{op}).
 As such, we see that $H_\tau(\rho)\leq 0$ and $H_\tau(\rho)=0$ if and only if $\rho=\tau$. In particular, if $\cl A\subseteq\BH$ and $\psi\in H_1$, then $H_\tau(\rho_\psi)=0$ if and only if $\psi$ is a trace vector for $\cl A$.
 
The fact that $H_\tau(\rho)$ takes negative values is consistent with the differential entropy theory of continuous systems. For example, let $U$ be a non-empty open subset of $[0,1]$ and $f=\chi_U/\lambda(U)\in L^1([0,1],\lambda)$  be the corresponding density with respect to the Lebesgue measure $\lambda$. The entropy of $f$ relative to the probability space $([0,1],\lambda)$ is 
$$H(f)=-\int_0^1 f(x)\log(f(x)) \ d\lambda(x)=\log(\lambda(U))\in(-\infty,0].$$

\begin{proposition}\label{p:monotone} Let $\cl A\subseteq\BH$ be a $\TO$-factor with tracial state $\tau$. The function
$$H_\tau:\cl S(\cl A)\ni \rho\mapsto H_\tau(\rho)\in[-\infty,0]$$
is non-increasing under approximate convertibility by $\locc(\cl A)$, when restricted to states of the form $\rho_\psi$, $\psi\in H_1$.
\end{proposition}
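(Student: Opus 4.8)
The plan is to combine Theorem~\ref{th_maj} with the Schur-concavity of the entropy functional under majorisation. Monotonicity here means that whenever $\psi$ is approximately convertible to $\vphi$ via $\locc(\cl A)$ we must show $H_\tau(\rho_\vphi)\le H_\tau(\rho_\psi)$. Since $\psi$ and $\vphi$ are unit vectors, $\tau(\rho_\psi)=\om_\psi(1)=(\psi,\psi)=1$ and likewise $\tau(\rho_\vphi)=1$, so the trace-normalisation in the definition of $\prec$ is automatic. By Theorem~\ref{th_maj}, approximate convertibility is equivalent to $\rho_\psi\prec\rho_\vphi$, and the whole statement reduces to the implication
\[\rho_\psi\prec\rho_\vphi\ \Longrightarrow\ H_\tau(\rho_\vphi)\le H_\tau(\rho_\psi).\]
First I would observe that, as $\cl A$ is a $\TO$-factor with $\tau(1)=1$, the singular value function of any state vanishes on $[1,\infty)$ (take the zero projection in the defining infimum once $t\ge 1$); hence $\mu(\rho_\psi)$ and $\mu(\rho_\vphi)$ are non-increasing functions supported on $(0,1)$ with $\int_0^1\mu_t(\rho_\psi)\,dt=1=\int_0^1\mu_t(\rho_\vphi)\,dt$.

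The heart of the matter is then the Hardy--Littlewood--P\'olya--Karamata inequality for non-increasing rearrangements. The relation $\rho_\psi\prec\rho_\vphi$ says exactly that $\int_0^s\mu_t(\rho_\psi)\,dt\le\int_0^s\mu_t(\rho_\vphi)\,dt$ for every $s\in(0,1]$, with equality at $s=1$; that is, $\mu(\rho_\psi)$ is majorised by $\mu(\rho_\vphi)$ as functions on $(0,1)$. The entropy density $\eta(x)=-x\log x$ is concave on $[0,\infty)$ with $\eta(0)=0$, so Karamata's inequality gives $\int_0^1\eta(\mu_t(\rho_\psi))\,dt\ge\int_0^1\eta(\mu_t(\rho_\vphi))\,dt$, which is precisely $H_\tau(\rho_\psi)\ge H_\tau(\rho_\vphi)$. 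Equivalently, using the identity $H_\tau(\rho)=\tau(\eta(\rho))$ recorded before the statement, one may invoke the trace-inequality form of majorisation in Hiai's theory \cite{h}: concavity of $\eta$ together with $\rho_\psi\prec\rho_\vphi$ yields $\tau(\eta(\rho_\psi))\ge\tau(\eta(\rho_\vphi))$.

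The one genuine subtlety is that $H_\tau$ takes values in $[-\infty,0]$, so the inequality must be interpreted in the extended reals. I would handle this by noting that $\eta\le 1/e$ is bounded above, so each integral is a well-defined element of $[-\infty,0]$ (the upper bound $0$ following from Jensen's inequality and $\int_0^1\mu_t\,dt=1$), and then by approximating $\eta$ from above by the concave functions $\eta_n$ that agree with $\eta$ on $[0,n]$ and are affine (tangent to $\eta$ at $n$) on $[n,\infty)$. Each $\eta_n$ is bounded above and satisfies $\eta_n\downarrow\eta$ pointwise; applying Karamata to each $\eta_n$ and passing to the limit by monotone convergence (each $\eta_n\circ\mu$ being dominated above by the integrable function $\eta_1\circ\mu$) recovers the desired inequality, and in particular forces $H_\tau(\rho_\vphi)=-\infty$ whenever $H_\tau(\rho_\psi)=-\infty$, as is consistent with $\rho_\psi$ being the more mixed state. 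I expect this limiting argument to be the only real obstacle, and a routine one; all of the conceptual content is supplied by Theorem~\ref{th_maj} and the concavity of $\eta$.
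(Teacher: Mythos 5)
Your argument is correct, and it reaches the same reduction as the paper -- via Theorem~\ref{th_maj}, everything comes down to showing that $\rho_\psi\prec\rho_\vphi$ forces $H_\tau(\rho_\psi)\ge H_\tau(\rho_\vphi)$ -- but you discharge that final step differently. The paper rewrites $H_\tau(\rho)$ as $-S(\mu(\rho),\chi_{[0,1]})$, the negative relative entropy of the singular value density with respect to the uniform distribution on $[0,1]$, and then cites a result of van Erven and Harremo\"es (\cite[Theorem 10]{veh}) asserting that relative entropy to the uniform distribution is monotone under majorisation of densities; a remark after the proof notes a second route via Hiai's characterisation of majorisation through doubly stochastic maps \cite[Theorem 4.7\,(1)]{h} combined with monotonicity of relative entropy \cite[Theorem 5.3]{op}. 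You instead prove the needed Schur-concavity from scratch: the continuous Hardy--Littlewood--P\'olya inequality applied to the concave function $\eta(x)=-x\log x$ (with $\eta(0)=0$), preceded by the correct observations that $\mu_t(\rho)=0$ for $t\ge1$ and $\int_0^1\mu_t(\rho)\,dt=\tau(\rho)=1$, and followed by a truncation of $\eta$ by concave functions affine beyond $n$ to justify the inequality in $[-\infty,0]$ via monotone convergence. This is sound: each truncated integral is finite because the affine tails are controlled by the integrability of $\mu(\rho)$, and the Jensen bound $H_\tau(\rho)\le0$ is as you say. What your route buys is self-containedness -- no appeal to the information-theoretic literature -- at the modest cost of the limiting argument; what the paper's route buys is brevity and a conceptual link to relative entropy that it exploits in the surrounding discussion. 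Both arguments rest on exactly the same content, namely concavity of $\eta$ together with majorisation.
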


\begin{proof} First note that for any state $\rho\in\cl S(\cl A)$,
$$H_\tau(\rho)=-S(\rho,\tau)=-S(\mu(\rho),\chi_{[0,1]}),$$
where $S(\mu(\rho),\chi_{[0,1]})$ is the relative entropy of the density $\mu(\rho)\colon t\mapsto \mu_t(\rho)$ on $[0,1]$ with respect to the uniform distribution. Given $\psi,\vphi\in H_1$, if $\psi$ is approximately convertible to $\vphi$ via $\locc(\cl A)$ then by Theorem~\ref{th_maj} we have $\rho_{\psi}\prec \rho_{\vphi}$, meaning that $\mu_t(\rho_\psi)\prec\mu_t(\rho_\vphi)$ as probability densities on $[0,1]$. By \cite[Theorem 10]{veh}, it follows that
$$S(\mu_t(\rho_\psi),\chi_{[0,1]})\leq S(\mu_t(\rho_\vphi),\chi_{[0,1]}).$$
Hence, $H_\tau(\rho_\psi)\geq H_\tau(\rho_\vphi)$.
\end{proof}

\begin{remark} In the proof of Proposition~\ref{p:monotone} we could instead appeal to \cite[Theorem 4.7\,(1)]{h} for the connection between majorisation and double stochasticity together with monotonicity of the relative entropy \cite[Theorem 5.3]{op}.
\end{remark}

Recall that any density matrix $\rho\in M_n$ satisfies
$$S(\rho)=-S(\rho,\tau)+\log(n),$$
where $S(\cdot)$ is the von Neumann entropy and $\tau=\frac{1}{n}\tr$ is the maximally mixed state (the $\log(n)$ factor would disappear if we used the unnormalised trace $\tr$). It is known that the restriction of any entanglement monotone to pure states $\psi\in \C^n\ten\C^n$ is a concave function of the reduced density $\rho_\psi=(\id\ten\tr)(|\psi\ra\la\psi|)$ \cite[Theorem 3]{v}. The entanglement monotone $-S(\rho_\psi,\tau)$ is equivalent (up to the translational factor $\log(n)$) to the common choice of $S(\rho_\psi)$, and is the finite-dimensional analogue of our proposed monotone above. Note that $-S(\rho_\psi,\tau)\in[-\log(n),0]$, with the largest value of 0 occurring for maximally entangled $\psi$, and the lowest value of $-\log(n)$ occurring for separable $\psi$.

In the von Neumann algebraic formulation of quantum field theory, a bipartite system is modelled by a pair $\cl A,\cl B\subseteq\BH$ of von Neumann algebras such that $\cl A\subseteq\cl B'$ (not necessarily equal). Calculating reduced von Neumann entropies in this context is problematic as entropies are always infinite in non-type I systems \cite[Theorem 6.10]{op}. This can be circumnavigated when $(\cl A,\cl B)$ satisfies the so-called split property, meaning there exists a type I factor $\cl M$ satisfying $\cl A\subseteq\cl M\subseteq\cl B'$. In the case of $\TO$-factors, our monotone $-S(\rho,\tau)$ has the advantage that it is ``typically'' finite and can produce meaningful entropic measures without additional restrictions on $(\cl A,\cl B)$. Even in type III settings, it suggests that the relative entropy with respect to a fixed reference state (for which there is often a canonical choice) could be a novel way to measure entanglement.


\section{Outlook}

Several natural lines of investigation arise from this work. First, we intend to study the generalisation of our main result to the non-factor setting in connection with \cite{h2}, as mentioned at the end of Section~\ref{s:main}.
This could be useful for the study of entanglement in hybrid systems \cite{kuper,DevShor,bkk0,bkk1}.
Second, a rectangular version of our main theorem,
describing convertibility between states $\psi\in H$ and $\vphi\in K$, with respect to distinct bipartite systems
$(\cl A_1,\cl B_1)$ in $\BH$ and $(\cl A_2,\cl B_2)$ in $\cl B(K)$, would be desirable.
Among other things, this could have applications to the structure of quantum correlation
matrices and values of certain non-local games \cite{psstw}.
We also plan to explore notions of distillability/dilution of entanglement for general bipartite systems in connection with this and previous work \cite{vw,kmsw}. In that direction it would be interesting to explore uniqueness of entanglement monotones in the asymptotic regime for pure states, analogous to the finite-dimensional setting \cite{pr}. Finally, we observe that while many of the von Neumann algebras of interest in algebraic quantum field theory are of type III \cite{haag}, and hence not semi-finite,  it is conceivable that semi-finite von Neumann algebras may be relevant in the discretisation of space-time via tensor networks---a current area of research---analogous to those arising from discretisations of the canonical commutation relations \cite{slawny}.

\vspace{0.1in}

{\noindent}{\it Acknowledgements.} The authors are grateful for the reviewers' comments, which improved the overall presentation of the paper. J.C. was partially supported by NSERC Discovery Grant RGPIN-2017-06275. D.W.K. was partly supported by NSERC Discovery Grant 400160 and by a Royal Society grant allowing research visits to Queen's University Belfast.
R.H.L.~was partly supported by UCD Seed Funding.


\begin{thebibliography}{99}

\bibitem{arv}
{\sc W. Arveson},
{\it Discretized CCR algebras},
{\rm J. Operator Thy. 26 (1991), no. 2, 225--239}.

\bibitem{bcs}
{\sc J. Bardeen, L. N. Cooper and J. R. Schrieffer},
{\it Theory of superconductivity},
{\rm Phys. Rev. 108 (1957), 1175}.

\bibitem{bkk0}
{\sc C. B\'{e}ny, A. Kempf, and D. W. Kribs},
{\it Generalization of quantum error correction via the Heisenberg picture},
{\rm Phys. Rev. Lett. 98 (2007), 100502}.

\bibitem{bkk1}
{\sc C. B\'{e}ny, A. Kempf, and D. W. Kribs},
{\it Quantum error correction of observables},
{\rm Phys. Rev. A 76 (2007), 042303, 22 pp}.

\bibitem{bkk2}
{\sc C. B\'{e}ny, A. Kempf, and D. W. Kribs},
{\it Quantum error correction on infinite-dimensional Hilbert spaces},
{\rm J. Math. Phys. 50 (2009), no. 6, 062108, 24 pp}.

\bibitem{bfs}
{\sc M. Berta, F. Furrer, and V. B. Scholz},
{\it The smooth entropy formalism for von Neumann algebras},
{\rm J. Math. Phys. 57 (2016), no. 1, 015213, 25 pp}.


\bibitem{blecher_smith}
{\sc D. P. Blecher and R. R. Smith},
{\it The dual of the Haagerup tensor product},
\textrm{J. London Math. Soc. (2) 45 (1992), 126--144}.

\bibitem{cs}
{\sc A. Chatterjee and R. R. Smith},
{\it The central Haagerup tensor product and maps between von Neumann algebras},
{\rm J. Funct. Anal. 112 (1993), no. 1, 97--120}.

\bibitem{clmow}
{\sc E. Chitambar, D. Leung, L. Mancinska, M. Ozols and A. Winter},
{\it Everything you always wanted to know about LOCC (but were afraid to ask)},
\textrm{Comm. Math. Phys. 328 (2014), no. 1, 303--326}.

\bibitem{cklt}
{\sc J. Crann, D.W. Kribs, R.H. Levene and I.G. Todorov},
{\it Private algebras in quantum information and infinite-dimensional complementarity,}
\textrm{J. Math. Phys. 57 (2016), no. 1, 015208}.

\bibitem{d}
{\sc K. R. Davidson},
{\it $C^*$-algebras by Example},
{\rm Fields Institute Monographs, 1996}.

\bibitem{de}
{\sc J. Derezi\'{n}ski},
{\it Introduction to representations of the canonical commutation and anticommutation relations}, in
{\it Large Coulomb Systems: Lecture Notes on Mathematical Aspects of QED},
{\rm Springer Berlin Heidelberg, 2006. pp 63--143}.

\bibitem{dg}
{\sc J. Derezi\'{n}ski and C. G\'{e}rard},
{\it Mathematics of quantization and quantum fields},
{\rm Cambridge Monographs on Mathematical Physics. Cambridge University Press, Cambridge, 2013}.

\bibitem{dmm}
{\sc J. Derezi\'{n}ski, K. A. Meissner, and M. Napi\'{o}rkowski},
{\it On the energy-momentum spectrum of a homogeneous Fermi gas},
{\rm Ann. Henri Poincar\'{e} 14 (2013), no. 1, 1--36}.

\bibitem{DevShor}
{\sc I. Devetak, and P.W. Shor},
{\it The capacity of a quantum channel for simultaneous transmission of classical and quantum information},
{\rm Comm. Math. Phys. 256 (2005), no. 2, 287--303}.


\bibitem{dpp}
{\sc K. Dykema, V. Paulsen, and J. Prakash},
{\it Non-closure of the set of quantum correlations via graphs},
{\rm Comm. Math. Phys. 365 (2019), no. 3, 1125--1142}.


\bibitem{fk}
{\sc T. Fack and H. Kosaki},
{\it Generalised $s$-numbers of $\tau$-measurable operators},
{\rm Pacific J. Math. 123 (1986), no. 2, 269--300}.

\bibitem{Faddeev}
{\sc L. Faddeev},
{\it Discrete Heisenberg-Weyl group and modular group},
{\rm Lett. Math. Phys. 34 (1995), 249}.

\bibitem{haag}
{\sc R. Haag},
{\it Local quantum physics. Fields, particles, algebras},
{\rm Texts and Monographs in Physics. Springer-Verlag, Berlin, 1992}.

\bibitem{haagerup}
{\sc U. Haagerup},
{\it Decomposition of completely bounded maps on operator algebras},
{\rm unpublished manuscript}.

\bibitem{haagerup2}
{\sc U. Haagerup},
{\it The standard form of von Neumann algebras}.
{\rm Math. Scand. 37 (1975), 271--283}.

\bibitem{musath}
{\sc U. Haagerup and M. Musat},
{\it As asymptotic property of factorizable completely positive maps and the Connes embedding problem},
{\rm Comm. Math. Phys. 338 (2015), 721--752}.

\bibitem{h}
{\sc F. Hiai},
{\it Majorization and stochastic maps in von Neumann algebras},
{\rm J. Math. Anal. Appl. 127 (1987), 18--48}.

\bibitem{h2}
{\sc F. Hiai},
{\it Spectral relations and unitary mixing in semifinite von Neumann algebras},
{\rm Hokkaido Math. J. 17 (1988), no. 1, 117--137}.


\bibitem{hiaif1}
{\sc F. Hiai},
{\it Quantum $f$-divergences in von Neumann algebras I. Standard $f$-divergences},
{\rm J. Math. Phys. 59 (2018), no. 10, 102202, 27 pp}.

\bibitem{hiaif2}
{\sc F. Hiai},
{\it Quantum $f$-divergences in von Neumann algebras II. Maximal $f$-divergences},
{\rm J. Math. Phys. 60 (2019), no. 1, 012203, 30 pp}.

\bibitem{hs}
{\sc S. Hollands and K. Sanders},
{\it Entanglement measures and their properties in quantum field theory},
{\rm Springer Briefs in Mathematical Physics 34, 2018. Springer International Publishing}.

\bibitem{jietal}
{\sc Z. Ji, A. Natarajan, T. Vidick, J. Wright and H. Yuen},
{\it MIP$^*$=RE},
{\rm arXiv:2001.04383}.

\bibitem{jungeetal}
{\sc M. Junge, M. Navascues, C. Palazuelos, D. Perez-Garcia, V. B. Scholz, and R. F Werner}.
{\it Connes' embedding problem and Tsirelson's problem},
{\rm J. Math. Phys. 52 (2011), no. 1, 012102}.

\bibitem{jungep}
{\sc M. Junge and C. Palazuelos},
{\it Large violation of Bell inequalities with low entanglement},
{\rm Comm. Math. Phys. 306 (2011), 695--746}.

\bibitem{keylsw}
{\sc M. Keyl, D. Schlingemann and R. F. Werner},
{\it Infinitely entangled states},
{\rm Quantum Inf. Comput. 3 (2003), no. 4, 281--306}.

\bibitem{kmsw}
{\sc M. Keyl, T. Matsui, D. Schlingemann, and R. F. Werner},
{\it Entanglement Haag-duality and type properties of infinite quantum spin chains},
{\rm Rev. Math. Phys. 18 (2006), no. 9, 935--970}.

\bibitem{kr2}
{\sc R. V. Kadison and J. R. Ringrose},
{\it Fundamentals of the theory of operator algebras, Volume II: Advanced theory},
{\rm AMS Grad. Studies in Math. vol.~16, 1997}.


\bibitem{ksw}
{\sc D. Kretschmann, D. Schlingemann and R. F. Werner},
{\it A continuity theorem for Stinespring's dilation},
{\rm J. Funct. Anal. 255 (2008), 1889--1904}.

\bibitem{kuper}
{\sc G. Kuperberg},
{\it The capacity of hybrid quantum memory},
{\rm IEEE Trans. Inf. Thy. 49 (2003), no. 6, 1465--1473}.

\bibitem{lp}
{\sc H.-K. Lo and S. Popescu},
{\it Concentrating entanglement by local actions: Beyond mean values},
{\rm Phys. Rev. A 63 (2001), no. 2, 022301}.

\bibitem{longo}
{\sc R. Longo},
{\it On Landauer's principle and bound for infinite systems},
{\rm Comm. Math. Phys. 363 (2018), no. 2, 531--560}.

\bibitem{lx}
{\sc R. Longo and F. Xu},
{\it Relative entropy in CFT},
{\rm Adv. Math. 337 (2018), 139--170}.

\bibitem{nielsen}
{\sc M. Nielsen},
{\it Conditions for a class of entanglement transformations},
{\rm Phys. Rev. Lett. 83 (1999), 436}.

\bibitem{op}
{\sc M. Ohya and D. Petz},
{\it Quantum entropy and its use},
{\rm Texts and Monographs in Physics, Springer-Verlag, Berlin-Heidelberg 1993}.

\bibitem{obnm}
{\sc M. Owari, S. L. Braunstein, K. Nemoto and M. Murao},
{\it $\ep$-convertibility of entangled states and extension of Schmidt rank in infinite-dimensional systems},
{\rm Quantum Inf. Comput. 8 (2008), no. 1 \& 2, 30--52}.


\bibitem{paulsen_book}
{\sc V. I. Paulsen},
{\it Completely bounded maps and operator algebras},
{\rm Cambridge University Press, 2002}.


\bibitem{psstw}
{\sc V. I. Paulsen, S. Severini, D. Stahlke, I. G. Todorov and A. Winter},
{\it Estimating quantum chromatic numbers},
{\rm J. Funct. Anal. 270 (2016), no. 6, 2188--2222}.


\bibitem{pr}
{\sc S. Popescu and D. Rohrlich},
{\it Thermodynamics and the measure of entanglement},
{\rm Phys. Rev. A (3) 56 (1997), no. 5, R3319--R3321}.


\bibitem{slawny}
{\sc J. Slawny},
{\it On factor representations and the $C^*$-algebra of canonical commutation relations},
{\rm Comm. Math. Phys. 24 (1972), 151--170}.

\bibitem{slofstra}
{\sc W. Slofstra},
{\it Tsirelson's problem and an embedding theorem for groups arising from non-local games},
{\rm J. Amer. Math. Soc. 33 (2020), no. 1, 1--56.}

\bibitem{t1}
{\sc M. Takesaki},
{\it Theory of operator algebras I},
{\rm Encyclopedia of Mathematical Sciences 124, Springer-Verlag Berlin-Heidelberg-New York, 2002}.

\bibitem{t2}
{\sc M. Takesaki},
{\it Theory of operator algebras II},
{\rm Encyclopedia of Mathematical Sciences 125, Springer-Verlag Berlin-Heidelberg-New York, 2003}.

\bibitem{veh}
{\sc T. van Erven and P. Harremo\"{e}s},
{\it R\'{e}nyi Divergence and majorization},
{\rm 2010 IEEE International Symposium on Information Theory, Conference Proceedings DOI:10.1109/ISIT.2010.5513784}.

\bibitem{vw}
{\sc R. Verch and R. F. Werner},
{\it Distillability and positivity of partial transposes in general quantum field systems},
{\rm Rev. Math. Phys. 17 (2005), no. 5, 545--576}.

\bibitem{v}
{\sc G. Vidal},
{\it Entanglement monotones},
{\rm  J. Modern Opt. 47 (2000), no. 2--3, 355--376}.

\bibitem{vN}
{\sc J. von Neumann},
{\it Mathematical foundations of quantum mechanics},
{\rm Princeton University Press, 1955}.

\end{thebibliography}
\end{document}